\pgfplotsset{compat=1.12} 
 \setlist[1]{wide}
 \setlist[2]{leftmargin=15mm}
 \setlist[enumerate]{label=\rm{(\arabic*)}}
 \setlist[enumerate,2]{label=\rm({\it\roman*}), }
 \setlist[itemize]{label=\raisebox{0.25ex}{\tiny$\bullet$}}
\theoremstyle{plain}
\newtheorem{theorem}{Theorem}
\newtheorem{lemma}{Lemma}[section]
\newtheorem{corollary}[lemma]{Corollary}
\newtheorem{proposition}[lemma]{Proposition}
\theoremstyle{definition}
\newtheorem{definition}[lemma]{Definition}
\theoremstyle{remark}
\newtheorem{remark}[lemma]{Remark}
\newtheorem{observation}[lemma]{Observation}
\newtheorem{example}[lemma]{Example}
\newcommand{\PPP}{\mathbb{P}}
\newcommand{\ZZZ}{\mathbb{Z}}
\newcommand{\NNN}{\mathbb{N}}
\newcommand{\CCC}{\mathbb{C}}
\newcommand{\FFF}{\mathbb{F}}
\newcommand{\QQQ}{\mathbb{Q}}
\newcommand{\PPPP}{\mathcal{P}}
\newcommand{\QQQQ}{\mathcal{Q}}
\newcommand{\NNNN}{\mathcal{N}}
\newcommand{\kk}{\mathbf{k}}
\newcommand{\simto}{\stackrel{\sim}\to}
\newcommand*\from{\colon}
\newcommand{\dashto}{\dashrightarrow}
\newcommand{\overbar}[1]{\mkern 1.5mu\overline{\mkern-1.5mu#1\mkern-1.5mu}\mkern 1.5mu}
\DeclareMathOperator{\Aut}{Aut}
\DeclareMathOperator{\Gal}{Gal}
\DeclareMathOperator{\id}{id}
\DeclareMathOperator{\Sym}{Sym}
\DeclareMathOperator{\Bir}{Bir}
\DeclareMathOperator{\BirMori}{BirMori}
\DeclareMathOperator{\Bas}{Bas}
\DeclareMathOperator{\NE}{NE} 
\DeclareMathOperator{\PGL}{PGL}
\DeclareMathOperator{\Pic}{Pic}
\DeclareMathOperator{\Cr}{Cr} 
\DeclareMathOperator{\rk}{rk} 
\DeclareMathOperator{\CB}{CB} 
\DeclareMathOperator{\Marked}{M} 
\begin{document}
\subjclass[2010]{Primary 14E07 14G27 14E05 14J26}
\keywords{Cremona group; normal subgroups; relations; conic bundles; Sarkisov links; Galois action; non-closed fields}

\title{Relations in the Cremona group over perfect fields}
\author[Julia Schneider]{Julia Schneider}
\address{Julia Schneider, Universit\"{a}t Basel, Departement Mathematik und Informatik, Spiegelgasse $1$, CH-$4051$ Basel, Switzerland}
\email{julia.noemi.schneider@unibas.ch}

\begin{abstract}
  For perfect fields $\kk$ satisfying $[\overbar \kk:\kk]>2$, we construct new normal subgroups of the plane Cremona group and provide an elementary proof of its non-simplicity, following the melody of the recent proof by Blanc, Lamy and Zimmermann that the Cremona group of rank $n$ over (subfields of) the complex numbers is not simple for $n\geq3$.
\end{abstract}

\maketitle
\tableofcontents

\section{Introduction}\label{section:CremonaOverPerfectIntroduction}

The Cremona group $\Cr_n(\kk)=\Bir_\kk(\PPP^n)$ is the group of birational transformations of the projective $n$-space over a field $\kk$.
In dimension $n=2$ it has been proven \cite{CL13, SB13, lonjou16} that the Cremona group over any field is not simple, answering a long-open question.
For algebraically closed fields, the classical result by Noether and Castelnuovo \cite{Noether1870, Castelnuovo1901}, which states that the Cremona group is generated by $\PGL_3(\kk)$ and the standard quadratic involution, has as a consequence that the Cremona group is a perfect group, meaning that all group homomorphisms from the Cremona group to an abelian group are trivial.
For many perfect fields, however, Lamy and Zimmermann constructed a surjective group homomorphism from the plane Cremona group to a free product of $\ZZZ/2\ZZZ$ \cite[Theorem C]{LZ19}, implying non-perfectness and thus reproving non-simplicity of the Cremona group in these cases.
Hence, the structure of the plane Cremona group depends fundamentally on the field.
Observing similarities between the plane Cremona group over non-closed fields and the Cremona groups in high dimensions, Blanc, Lamy and Zimmermann have recently managed to construct a surjective group homomorphism from the high-dimensional Cremona group $\Cr_n(\kk)$ to a free product of direct sums of $\ZZZ/2\ZZZ$, where $n\geq 3$ and $\kk\subset\CCC$ is a subfield \cite{BLZ19}.
For the high-dimensional case, it turned out that it is more suitable not to use the high-dimensional analogue of \cite{LZ19} but to take a different construction.
The goal of this article is to adapt the strategy of \cite{BLZ19} back to dimension two over perfect fields and find new normal subgroups of $\Cr_2(\kk)$.
No knowledge of \cite{BLZ19} is required to read our paper but we will highlight the connections to their proof.
In fact, only classical algebraic geometry is used, and the well-established decomposition of birational maps into Sarkisov links (proven in \cite[Theorem 2.5]{Iskovskikh96} and \cite[Appendix]{Corti95}, see Theorem~\ref{theorem:Iskovskikh} below).
Therefore, the following result can be seen as an elementary proof of the non-simplicity of the Cremona group over perfect fields whose extension degree of the algebraic closure is larger than $2$ (and thus infinite by Artin-Schreier):

\begin{theorem}\label{theorem:CremonaNotSimple}
  For each perfect field $\kk$ such that $[\overbar\kk:\kk]>2$, there exists a group homomorphism
  \[
    \Bir_\kk(\PPP^2)\twoheadrightarrow \bigoplus_I \ZZZ/2\ZZZ
  \]
  where the indexing set $I$ is infinite and whose kernel contains $\Aut_\kk(\PPP^2)=\PGL_3(\kk)$ such that the restriction to the subgroup that is locally given by
  \[
    \left\{(x,y)\mapsto (xp(y),y)\mid p\in\kk(y)\setminus\{0\}\right\}
  \]
  is surjective.
  In particular, the Cremona group $\Bir_\kk(\PPP^2)$ is not perfect and thus not simple.
\end{theorem}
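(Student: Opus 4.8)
The plan is to push everything through the Sarkisov program. By Theorem~\ref{theorem:Iskovskikh}, every element of $\Bir_\kk(\PPP^2)$ factors as a composition of Sarkisov links between Mori fibre spaces, and any two factorizations of the same map differ by a finite sequence of \emph{elementary relations} drawn from an explicit, bounded list. Working in the groupoid $\BirMori$ of Mori fibre spaces with Sarkisov links as morphisms, it therefore suffices to (a) assign to each link an element of $\bigoplus_I\ZZZ/2\ZZZ$ and extend multiplicatively to compositions, and (b) check that this assignment sends every elementary relation to $0$. The resulting map then descends to a genuine group homomorphism $\phi\colon\Bir_\kk(\PPP^2)\to\bigoplus_I\ZZZ/2\ZZZ$, and the remaining assertions of the theorem become computations with $\phi$.

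To define the assignment I would introduce a notion of \emph{marking} of a conic bundle $C\to\PPP^1$: a closed point of the base carrying the geometric data of the fibre above it, recorded up to Galois conjugacy and up to the identifications forced by isomorphisms of conic bundles. The indexing set $I$ is the set of \emph{admissible} marking classes, where admissibility is essentially a lower bound on the residue degree (morally, degree $\geq 3$). Every link of type I, III or IV, and every link between del Pezzo surfaces, is sent to $0$; a link of type II between conic bundles is an elementary transformation centered at a closed point $p$ of the base, and I send it to the generator $e_{[p]}$ when $[p]$ is admissible and to $0$ otherwise. Here the hypothesis $[\overbar\kk:\kk]>2$ enters decisively: it is exactly the condition guaranteeing that admissible marking classes exist, and since the extension is then infinite by Artin–Schreier, there are closed points of arbitrarily large residue degree, so $I$ is infinite; over algebraically or real closed fields no admissible marking exists and $\phi$ collapses, consistently with perfectness of $\Cr_2(\overbar\kk)$.

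The heart of the argument, and the step I expect to be the main obstacle, is verifying that $\phi$ annihilates each elementary relation. Every such relation corresponds to a rank-$3$ configuration and unwinds into a cyclic word of links of controlled length; I would traverse the finite list of relation types from Theorem~\ref{theorem:Iskovskikh} and show, case by case, that the admissible type-II conic-bundle links appearing in the relation either do not occur or occur in pairs carrying the same marking class, so that their contributions cancel modulo $2$. This mirrors the central computation of \cite{BLZ19} and rests on a detailed analysis of how the base points and degenerate fibres of a conic bundle transform under each link, while tracking the Galois action on the points being blown up and contracted; controlling the markings through the chain of elementary transformations is where the real work lies.

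Finally I would dispatch the three concrete claims. That $\PGL_3(\kk)\subset\ker\phi$ follows because an automorphism of $\PPP^2$ admits a factorization whose admissible conic-bundle contributions cancel. For surjectivity of the restriction to the subgroup $\left\{(x,y)\mapsto(xp(y),y)\mid p\in\kk(y)\setminus\{0\}\right\}$, note that this subgroup is isomorphic to $\left(\kk(y)^*,\cdot\right)$, and that each such map preserves the projection to the $y$-line, hence decomposes into elementary transformations of the conic bundle $\PPP^1\times\PPP^1\to\PPP^1$ centered at the zeros and poles of $p$. Writing $p=c\prod_i\pi_i^{a_i}$ with $\pi_i$ monic irreducible, a direct computation gives $\phi\colon p\mapsto\sum_i a_i\,[\pi_i]$, the sum taken modulo $2$ over the admissible $\pi_i$ (the pole at infinity being a $\kk$-point, hence inadmissible). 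Taking $p=\pi$ for a single admissible irreducible hits the generator $e_{[\pi]}$, so the restriction is onto; non-perfectness and non-simplicity of $\Bir_\kk(\PPP^2)$ are then immediate.
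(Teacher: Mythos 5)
Your overall architecture (Sarkisov generators for the groupoid $\BirMori$, a homomorphism defined on links and checked on relations, surjectivity via elementary transformations of $\PPP^1\times\PPP^1\to\PPP^1$ at the zeros of an irreducible polynomial, large orbits from Artin--Schreier as in Lemma~\ref{lemma:LargeGaloisOrbitExists}) is the paper's melody, and your final computation matches Example~\ref{example:P1P1NonTrivialImage} in spirit. But there are two genuine gaps. The first is your admissibility threshold, ``morally, degree $\geq 3$'': this is too low. By Remark~\ref{remark:SmallGalois depth} (via Lemmas~\ref{lemma:TypeIIIDelPezzo} and~\ref{lemma:TypeIIDelPezzo}), every Sarkisov link that is \emph{not} of type II between conic bundles has Galois depth at most $8$, i.e.\ orbits of size $3,\dots,8$ sit comfortably on del Pezzo surfaces, so conic-bundle links of those depths participate in relations together with del Pezzo links --- all of which you send to $0$ --- and in such mixed relations your admissible contributions have no reason to occur in cancelling pairs. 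This is exactly why the paper marks only links of Galois depth $\geq 16$, and why it remarks that even using the full relation lists of \cite{IKT93} or \cite{LZ19} the cutoff could at best be lowered to $9$, since the blow-up of more than $8$ points is no longer del Pezzo; the value $16$ itself is forced by the constants in Lemma~\ref{lemma:InductionStepLambda}. Relatedly, your marking (a closed point of the base with its fibre data, up to conjugacy) is strictly finer than the paper's equivalence classes (conic-bundle class plus Galois depth, Definition~\ref{def:EquivalentSarkisovLinks}), so it has more identifications to survive --- in the trivial relations $\alpha\psi^{-1}\varphi=\id$ and in the four-link relations of Remark~\ref{remark:RelationOfFourLinks} --- none of which you verify.

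The second gap is that the step you yourself call ``the main obstacle'' is deferred rather than done, and deferred to the wrong tool: Theorem~\ref{theorem:Iskovskikh}, as used in the paper, is a generation statement only; the list of generating relations in \cite{Iskovskikh96} and \cite{IKT93} is, as the paper stresses, long and complicated, and the entire point of Section~\ref{section:Relations} is to avoid traversing it. Instead the paper proves its own generating-relations theorem (Theorem~\ref{theorem:GeneratingRelations}): once the cutoff is $16$, every relation is generated by relations among links of Galois depth $\leq 15$ together with the paired four-link conic-bundle relations, where the pairing guarantees cancellation modulo $2$. The engine is a monotonicity argument absent from your proposal: tracking a linear system $H\sim-\lambda K_X+\nu f$ through the word, Lemma~\ref{lemma:InductionStepLambda} shows that an unpaired block of large-depth conic-bundle links followed by a change of fibration strictly increases $\lambda$ (indeed $\lambda'>4\lambda$), so by Corollary~\ref{corollary:NotAnIsoOfConicBundles} such a word can never close up to the identity. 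Without either this argument or an actually executed case-by-case check (which, at threshold $3$, would fail as explained above), the existence of your homomorphism $\phi$ is not established, and everything downstream --- the kernel containing $\PGL_3(\kk)$, the surjectivity computation $p\mapsto\sum_i a_i[\pi_i]$ --- is conditional on it.
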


The result is thus a $2$-dimensional analogue of \cite[Theorem A]{BLZ19}.
Theorem~\ref{theorem:CremonaNotSimple} together with \cite{zimmermann18}
(which states that the abelianisation of the real Cremona group is isomorphic to an uncountable direct sum of $\ZZZ/2\ZZZ$ and whose proof seems to work over any field with $[\bar\kk:\kk]=2$ when replacing ``uncountable'' with ``cardinality of the field'')
shows that for all perfect fields that are not algebraically closed, there is an infinite abelian group $A$, all of whose elements have order $2$, and a surjective group homomorphism $\Bir_\kk(\PPP^2)\dashrightarrow A$ whose kernel contains $\PGL_3(\kk)$.

While Noether and Castelnuovo provided a nice set of generators of the Cremona group over algebraically closed fields,
over non-closed fields such a nice set of generators is not known (a set of generators can be found in \cite{IKT93}).
For a Mori fiber space $X$ (a simple fibration, see Definition~\ref{def:MoriFiberSpace} below), however, we do not look at the group $\Bir(X)$ but consider instead the groupoid $\BirMori(X)$, which consists of birational maps between Mori fiber spaces that are birational to $X$.
The advantage is that we have generators:
The groupoid $\BirMori(X)$ is generated by Sarkisov links of type I to IV (simple birational maps, see Definition~\ref{def:SarkisovLink}) \cite[Theorem 2.5]{Iskovskikh96}.
Details about generators and relations for groupoids can be found in  \cite{higgins64}.
Whereas over an algebraically closed field the Sarkisov links are just the blow-up of one point (type III), or its inverse (type I), or the blow-up of one point followed by the contraction of one curve (type II), or an exchange of the fibration of $\PPP^1\times\PPP^1$ (type IV), over a perfect field one has to consider orbits of the Galois action of $\Gal(\overbar\kk/\kk)$ on $X$.
In this paper, the size of the orbits that lie in the base locus of a birational map is going to be important.
We say that the \textit{Galois depth} of a birational map $\varphi$ is the maximal size of an orbit that lies in the base locus of $\varphi$ or $\varphi^{-1}$.

So we do not directly construct a group homomorphism from $\Bir(X)$, but we first construct a groupoid homomorphism from $\BirMori(X)$ and then take the restriction to $\Bir(X)$.
For this one has to study relations in the groupoid.
Note that in \cite{Iskovskikh96} there is a long and complicated list of generating relations.
In \cite{LZ19} the focus lies on Bertini involutions (the blow-up of an orbit of size $8$ in $\PPP^2$, followed by the contraction of an orbit of curves of size $8$).
For higher dimensions, the focus lies on links of type II between Mori conic bundles that have a large covering gonality (see \cite{BLZ19} for definitions).
Translating this back to the $2$-dimensional case, we focus on links of type II between Mori conic bundles (that is, a Mori fiber space $X\to B$ where $B$ is a curve) that have large Galois depth and find the following generating relations:

\begin{theorem}\label{theorem:GeneratingRelations}
  Let $X$ be a Mori fiber space (of dimension $2$) over a perfect field.
  Relations of the groupoid $\BirMori(X)$ (with Sarkisov links as a set of generators) are generated by the trivial relations and relations of the following form:
  \begin{enumerate}
    \item\label{item:GeneratingRelations--Small}  $\varphi_n\cdots\varphi_1=\id$, where the Galois depth of all $\varphi_i$ is $\leq 15$, and
    \item\label{item:GeneratingRelations--Large}  $\chi_4\chi_3\chi_2\chi_1=\id$ where  $\chi_i\colon X_{i-1}\dashto X_i$ are links of type II between Mori conic bundles such that the links $\chi_1$ and $\chi_3$ are equivalent
    and $\chi_2$ and $\chi_4$ are equivalent.
  \end{enumerate}
\end{theorem}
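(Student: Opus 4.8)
The plan is to begin from the explicit generating set of relations supplied by Iskovskikh (the ``long and complicated list'' mentioned above) together with the groupoid presentation formalism of \cite{higgins64}, and to rewrite each relation in that list as a composition of trivial relations and relations of the two stated forms. Since a set of relations that generates all the given relations generates every relation, it is enough to perform this rewriting relation by relation; the relations of type~\ref{item:GeneratingRelations--Small} and type~\ref{item:GeneratingRelations--Large} are themselves relations by construction, so the whole content lies in the generation statement.

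The first step is a uniform bound on the Galois depth of links of all types except one. A link of type~I or III blows up, respectively contracts, a single Galois orbit passing between a del Pezzo surface of Picard rank one and a conic bundle; for the source or target to remain a Mori fiber space the size of this orbit is controlled by the degree of the del Pezzo surface, which lies in $\{1,\dots,9\}$. The same holds for links of type~II between two del Pezzo surfaces and for all links of type~IV, the latter merely exchanging the two conic bundle structures on a del Pezzo surface of Picard rank two and having base locus of bounded size. Enumerating the finitely many del Pezzo configurations that can arise and reading off the possible sizes of the orbits of blown-up points and of contracted curves yields an explicit constant; one checks it to be $15$. Every relation whose links are all of these types therefore has Galois depth $\le 15$ and is a relation of type~\ref{item:GeneratingRelations--Small}. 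In particular the only links capable of carrying unbounded Galois depth are links of type~II between conic bundles, i.e.\ elementary transformations performed along an orbit of points lying on pairwise distinct fibres.

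It remains to analyse the relations of Iskovskikh's list in which such large orbits appear. The key geometric fact is that a large orbit can enter a relation only through a pair of elementary transformations whose supports are disjoint, that is, lie over disjoint sets of fibres; as soon as two of the base points share a fibre the configuration degenerates to bounded-degree del Pezzo geometry, whose depth has already been bounded by $15$. Two elementary transformations with disjoint support commute, and recording this commutation as a closed loop of four links gives $\chi_4\chi_3\chi_2\chi_1=\id$, where $\chi_1$ and $\chi_3$ are the same elementary transformation up to equivalence (an elementary transformation and its inverse act over the same fibres, hence are equivalent as links), and likewise $\chi_2$ and $\chi_4$. This is exactly a relation of type~\ref{item:GeneratingRelations--Large}, so after stripping off type~\ref{item:GeneratingRelations--Small} corrections every large-depth relation is accounted for by such commutations.

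The step I expect to be the main obstacle is precisely this last reduction: showing that the large-depth part of \emph{every} relation on Iskovskikh's list, once bounded-depth pieces are absorbed, is generated by disjoint-support commutations alone, with no residual braiding among the elementary transformations and no sporadic large-depth relation left over. This requires tracking how $\Gal(\overbar\kk/\kk)$ permutes the blown-up points and the singular fibres along each link of a relation, and verifying that whenever two orbits interact non-trivially the interaction is confined to a del Pezzo surface of degree between $1$ and $9$. A second delicate point is to establish the genuine groupoid equivalences $\chi_1\sim\chi_3$ and $\chi_2\sim\chi_4$---matching the links themselves, not merely the sizes of their orbits---so that the resulting four-term relation is exactly of the form~\ref{item:GeneratingRelations--Large}.
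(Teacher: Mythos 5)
Your plan diverges from the paper's proof in a way that matters: the paper deliberately avoids Iskovskikh's long list of generating relations for this theorem (it is used only in Section~\ref{section:RationalMoriCB}) and instead gives a self-contained argument, and your proposal leaves precisely the content of the theorem unproven. The step you yourself flag as ``the main obstacle'' \emph{is} the theorem. You assert that a large orbit can enter a relation only through a pair of elementary transformations with disjoint supports, and that ``as soon as two of the base points share a fibre the configuration degenerates to bounded-degree del Pezzo geometry.'' Neither claim is substantiated, and the second is false as stated: two type II links between conic bundles whose base orbits interact along common fibers remain conic-bundle geometry (they partially cancel; this is exactly the subsequence $(m-1,m,m-1)$ in the fiberwise bookkeeping $\NNNN(F,i)$ of the paper's Lemma~\ref{lemma:RelationsGeneratedByFour}), not del Pezzo geometry. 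The genuinely hard point, which your sketch does not touch, is to rule out relations in which a large-depth conic-bundle link is conjugated against fibration-breaking pieces (links of type I, III, IV and type II between del Pezzo surfaces). The paper handles this quantitatively: writing a linear system as $H\sim-\lambda K_X+\nu f$, it shows that a fibration-preserving composition of links of Galois depth $\geq16$ followed by a depth-$\leq15$ map that does \emph{not} preserve the fibration forces $\lambda'>4\lambda$ (Lemma~\ref{lemma:InductionStepLambda}, via Cauchy--Schwarz and the AM--GM inequality), so an alternating chain of such pieces can never close up to the identity (Corollary~\ref{corollary:NotAnIsoOfConicBundles}). This is what reduces every relation containing a depth-$\geq16$ link to a pure conic-bundle relation, which Lemma~\ref{lemma:RelationsGeneratedByFour} then resolves. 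Nothing in your proposal substitutes for this mechanism, and a case-by-case verification of Iskovskikh's list would need it (or something equally strong) anyway.

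Two further inaccuracies. First, the constant $15$ does not arise from ``enumerating the finitely many del Pezzo configurations'': the uniform bound for all links other than type II between conic bundles is $8$, not $15$ (Remark~\ref{remark:SmallGalois depth}, from Lemmas~\ref{lemma:TypeIIIDelPezzo} and~\ref{lemma:TypeIIDelPezzo}); the threshold $16$ (hence $15$) is an artifact of the inequality $\beta>\Delta(1-\delta)=8$ with $\delta=\tfrac12$ in Lemma~\ref{lemma:InductionStepLambda} --- indeed the paper remarks that it would like to prove the statement with $8$ but the technicalities of that lemma prevent it. Second, your plan is anchored to Iskovskikh's classification for rational surfaces, so it says nothing about Mori fiber spaces that are not geometrically rational, for which the theorem also claims to hold and which the paper treats separately (Lemma~\ref{lemma:IskoForNotGeomRational} and Lemma~\ref{lemma:NotGeometricallyRational}). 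A minor point in the same direction: the equivalence $\chi_1\sim\chi_3$, $\chi_2\sim\chi_4$ does not need your ``link and its inverse'' heuristic ($\chi_3$ is not $\chi_1^{-1}$); by Definition~\ref{def:EquivalentSarkisovLinks} it suffices that the conic bundles are equivalent and the Galois depths agree, which Remark~\ref{remark:RelationOfFourLinks} gives directly.
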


(What is meant by trivial relations is explained at the beginning of Section~\ref{section:Relations}.
For the notion of equivalent links see Remark~\ref{remark:RelationOfFourLinks} and Definition~\ref{def:EquivalentSarkisovLinks}.)
This can be compared with \cite[Proposition 5.5]{BLZ19}.
We give a simple and self-contained proof to the above theorem using elementary techniques.
One could also give a proof of Theorem~\ref{theorem:GeneratingRelations} with $8$ instead of $15$, using the work of \cite{LZ19} (or \cite{IKT93}) and the fact that the blow-up of more than $8$ points is not del Pezzo anymore, but we do not achieve this with the elementary approach that we follow here: Some technicalities in Lemma~\ref{lemma:InductionStepLambda} deny us this pleasure.
One may however observe that in dimension $n\geq 3$ the bound on the covering gonality given in \cite{BLZ19}, the analogue of the Galois depth, is not explicit.
Using these generating relations, we are finally able to construct a groupoid homomorphism. (For the notation: $\CB(X)$ denotes the set of equivalence classes of Mori conic bundles, and $\Marked(C)$ denotes the set of equivalence classes of Sarkisov links between Mori conic bundles equivalent to $C$; see Definitions~\ref{def:EquivalentConicBundles} and~\ref{def:EquivalentSarkisovLinks}.)

\begin{theorem}\label{theorem:GroupHomomorphism}
  Let $X$ be a Mori fiber space over a perfect field.
  There exists a groupoid homomorphism
  \[
  \BirMori(X)\longrightarrow\bigast_{C\in \CB(X)}\bigoplus_{\chi\in \Marked(C)}\ZZZ/2\ZZZ
  \]
  that sends each Sarkisov link $\chi$ of type II between Mori conic bundles that is of Galois depth $\geq 16$ onto the generator indexed by its equivalence class, and all other Sarkisov links and all automorphisms of Mori fiber spaces birational to $X$ onto zero.

  Moreover it restricts to group homomorphisms
  \begin{align*}
    \Bir(X)\to \bigast_{C\in \CB(X)}\bigoplus_{\chi\in \Marked(C)}\ZZZ/2\ZZZ,&&
    \Bir(X/W)\to \bigoplus_{\chi\in \Marked(X/W)}\ZZZ/2\ZZZ.
  \end{align*}
\end{theorem}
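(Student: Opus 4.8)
The plan is to construct the homomorphism directly from the presentation of the groupoid furnished by the two preceding theorems. By Theorem~\ref{theorem:Iskovskikh} the groupoid $\BirMori(X)$ is generated by Sarkisov links, and by Theorem~\ref{theorem:GeneratingRelations} every relation among these generators is a consequence of the trivial relations together with the two families~\ref{item:GeneratingRelations--Small} and~\ref{item:GeneratingRelations--Large}. Following the formalism for presentations of groupoids in \cite{higgins64}, it therefore suffices to prescribe the image of each generator in the target group $G:=\bigast_{C\in\CB(X)}\bigoplus_{\chi\in\Marked(C)}\ZZZ/2\ZZZ$ and to check that this assignment kills every generating relation; such an assignment then extends uniquely to a groupoid homomorphism $\psi\colon\BirMori(X)\to G$. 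I would define $\psi$ by sending a Sarkisov link $\chi$ of type II between Mori conic bundles of Galois depth $\geq 16$ to the generator $e_{[\chi]}$ of the summand $\bigoplus_{\chi'\in\Marked(C)}\ZZZ/2\ZZZ$ indexed by its class $[\chi]$, where $C\in\CB(X)$ is the class of the conic bundles it connects, and by sending every other link, as well as every automorphism of a Mori fiber space, to the identity $1\in G$ (written as $0$ in the statement).

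The next step is to verify the three families of relations, the point being that $G$ is tailored so that each collapses. For the trivial relations the only nonformal input is that the class of a link is unchanged under taking inverses and that Galois depth is symmetric in $\varphi$ and $\varphi^{-1}$ by its very definition; consequently $\chi$ is a contributing generator if and only if $\chi^{-1}$ is, and then $\psi(\chi^{-1})=e_{[\chi]}=\psi(\chi)=\psi(\chi)^{-1}$ because every generator of $G$ has order two, so $\psi(\chi^{-1}\chi)=1=\psi(\id)$. For a relation of the form~\ref{item:GeneratingRelations--Small}, every $\varphi_i$ has Galois depth $\leq 15<16$, so each $\psi(\varphi_i)=1$ and the whole word maps to $1$. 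For a relation $\chi_4\chi_3\chi_2\chi_1=\id$ of the form~\ref{item:GeneratingRelations--Large}, I would use that equivalent links have equal image and equal Galois depth (Definition~\ref{def:EquivalentSarkisovLinks}), so that $\psi(\chi_3)=\psi(\chi_1)=:b$ and $\psi(\chi_4)=\psi(\chi_2)=:a$, whence the image of the word is $abab$.

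The heart of the matter, and the step I expect to be the main obstacle, is to see that $abab=1$ in $G$. If either pair has Galois depth $<16$, the corresponding element equals $1$ and $abab$ reduces to $a^2$ or $b^2$, which is trivial since generators have order two. The remaining case is that all four links have Galois depth $\geq 16$; here one must establish the geometric fact, recorded in Remark~\ref{remark:RelationOfFourLinks} and governed by Definition~\ref{def:EquivalentConicBundles}, that all four conic bundles $X_0,\dots,X_3$ lie in a single class $C\in\CB(X)$. Granting this, $a$ and $b$ lie in the \emph{same} abelian summand $\bigoplus_{\chi\in\Marked(C)}\ZZZ/2\ZZZ$, hence commute and satisfy $a^2=b^2=1$, so $abab=a^2b^2=1$. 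This is precisely why the target is a free product of direct sums rather than a single free or free abelian group: commutativity is forced exactly within one conic bundle class, which is where relation~\ref{item:GeneratingRelations--Large} operates, while no spurious identifications are imposed across distinct classes, which is what will later allow the image to be large.

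Once $\psi$ is produced, the two restrictions are formal. The subgroup $\Bir(X)\subseteq\BirMori(X)$ is the vertex group of the groupoid at the object $X$, so the restriction of the groupoid homomorphism is automatically a group homomorphism $\Bir(X)\to G$. Similarly, an element of $\Bir(X/W)$ is a birational self-map of $X$ over $W$; decomposing it into Sarkisov links, the only generators that can contribute nontrivially are type~II links between Mori conic bundles equivalent to $X\to W$, so its image lies in the single summand $\bigoplus_{\chi\in\Marked(X/W)}\ZZZ/2\ZZZ$, yielding the second stated homomorphism.
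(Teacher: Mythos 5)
Your proposal is correct and takes essentially the same route as the paper: define the map on generators and check that it kills the trivial relations and the two families of relations from Theorem~\ref{theorem:GeneratingRelations}, the crucial point being that in a relation $\chi_4\chi_3\chi_2\chi_1=\id$ all four conic bundles lie in one class $C\in\CB(X)$, so the images $a,b$ lie in a single abelian summand $\bigoplus_{\chi\in\Marked(C)}\ZZZ/2\ZZZ$ and $abab=a^2b^2=0$. Your handling of the restriction to $\Bir(X/W)$ is also the paper's: it rests on Corollary~\ref{corollary:PreservingTheFibrationMeansTypeII}, which supplies exactly the decomposition into type~II links between conic bundles equivalent to $X\to W$ that you invoke.
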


This is analogue to \cite[Theorem D]{BLZ19}.
Note that the group homomorphism of Theorem~\ref{theorem:GroupHomomorphism} is trivial if the field does not admit large orbits.
If the group homomorphism is not trivial, the kernel is a non-trivial normal subgroup of $\Bir(X)$.
For perfect fields $\kk$ that admit a large orbit and $X=\PPP^2_\kk$, we restrict the obtained group homomorphism to the equivalence class of Mori conic bundles corresponding to the Hirzebruch surfaces and provide an example that shows that the group homomorphism is not trivial and therefore obtain Theorem~\ref{theorem:CremonaNotSimple}.
Here ends the first part of this paper.

\bigskip
In the final section we investigate rational Mori conic bundles in order to refine Theorem~\ref{theorem:CremonaNotSimple}:

\begin{theorem}[Refinement of Theorem~\ref{theorem:CremonaNotSimple}]\label{theorem:Refinement}
  For each perfect field $\kk$ such that $[\overbar\kk:\kk]>2$, there exists a surjective group homomorphism
  \[
    \Bir_\kk(\PPP^2)\twoheadrightarrow \bigoplus_{I_0} \ZZZ/2\ZZZ \asterisk \left(\bigast_{J_5} \bigoplus_{I}\ZZZ/2\ZZZ\right) \asterisk \left(\bigast_{J_6} \bigoplus_{I}\ZZZ/2\ZZZ\right)
  \]
  where $I_0$ is infinite and $I$ is the set of integers $n\geq8$ such that there exists an irreducible polynomial in $\kk[x]$ of degree $2n+1$, and the indexing sets $J_d$ of the free products denote a set of equivalence classes of Mori conic bundles $X$ with $K_X^2=d$.
  The sets have cardinalities $|J_5|\geq\mathcal{N}_{4}$ and $|J_6|\geq\mathcal{N}_{2}$, where $\mathcal{N}_n$ denotes the number of Galois orbits of size $n$ in $\PPP^2$, up to linear transformations of $\PGL_3(\kk)$.

  In particular, if $\kk=\QQQ$ then there is a surjective group homomorphism  \[\Bir_\kk(\PPP^2)\twoheadrightarrow \bigast_{\NNN}\ZZZ/2\ZZZ,\]
  and for any finite field $\kk$ there is a surjective group homomorphism    \[\Bir_\kk(\PPP^2)\twoheadrightarrow \ZZZ/2\ZZZ\ast\ZZZ/2\ZZZ\ast\ZZZ/2\ZZZ.\]
\end{theorem}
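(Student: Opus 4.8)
The plan is to specialise the groupoid homomorphism of Theorem~\ref{theorem:GroupHomomorphism} to $X=\PPP^2_\kk$. Its restriction to $\Bir_\kk(\PPP^2)$ gives a group homomorphism
\[
\Bir_\kk(\PPP^2)\longrightarrow\bigast_{C\in\CB(\PPP^2)}\bigoplus_{\chi\in\Marked(C)}\ZZZ/2\ZZZ,
\]
and the refinement amounts to making the right-hand side explicit and then composing with the projection onto the free factors coming from the rational Mori conic bundles of self-intersection $K_X^2\in\{5,6,8\}$. Three things must be carried out: a classification of the relevant classes in $\CB(\PPP^2)$ organised by $K_X^2$; a computation of the marked links $\Marked(C)$ for these classes; and the construction of enough elements of $\Bir_\kk(\PPP^2)$ to hit a generating set, giving surjectivity.

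First I would classify. A Mori conic bundle over a rational curve has relative Picard rank one, hence $\rho(X)=2$, and over $\overbar\kk$ it acquires $s$ degenerate fibres with $\rho(\overbar X)=2+s$ and $K_X^2=8-s$. The value $K_X^2=8$ ($s=0$) is the single class of Hirzebruch surfaces; it contributes the left-hand factor $\bigoplus_{I_0}\ZZZ/2\ZZZ$ and, after projecting onto it, recovers exactly Theorem~\ref{theorem:CremonaNotSimple}. For $K_X^2=5$ ($s=3$) I would take the pencils of conics through a Galois orbit of four points of $\PPP^2$: the orbit is recovered from the bundle by contracting, over $\overbar\kk$, the degree-four multisection formed by the exceptional curves, so distinct orbits up to $\PGL_3(\kk)$ give inequivalent bundles and $|J_5|\geq\mathcal{N}_4$. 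The case $K_X^2=6$ ($s=2$) is handled analogously starting from Galois orbits of size two, giving $|J_6|\geq\mathcal{N}_2$. Since $K_X^2$ and, within a fixed value, the $\PGL_3(\kk)$-class of the orbit are invariants of the equivalence class, these families sit in pairwise distinct free factors.

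The heart of the argument, and where I expect the main difficulty, is the determination of $\Marked(C)$ for the classes of degree $5$ and $6$. A marked link here is a type~II elementary transformation performed over a closed point $q$ of the base $\PPP^1$, and its Galois depth equals the size of the orbit that is blown up. The key input I would establish is that, for these bundles, the multisection assembled from the exceptional curves has even degree and yields no $\kk$-rational section, so that the generic fibre is an anisotropic conic; consequently a link remaining inside the class must be taken over a closed point $q$ of odd degree, where the fibre splits and one blows up a point defined over $\kk(q)$, of orbit size $\deg q$. Writing $\deg q=2n+1$, the requirement ``Galois depth $\geq 16$'' of Theorem~\ref{theorem:GroupHomomorphism} becomes $2n+1\geq 17$, that is $n\geq 8$, and the existence of such a $q$ is equivalent to the existence of an irreducible polynomial of degree $2n+1$ in $\kk[x]$; this identifies the index set with $I$. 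For the Hirzebruch class the generic fibre is split, no parity constraint occurs, and the corresponding index set $I_0$ is infinite. Pinning down precisely which links preserve a fixed equivalence class, and checking that the competing ones map to zero, is the delicate part.

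Finally, for surjectivity I would realise each generator by an element of $\Bir_\kk(\PPP^2)$. Given a class $C\in\{\text{Hirzebruch}\}\cup J_5\cup J_6$ and an admissible index, I construct a loop based at $\PPP^2$ in the groupoid $\BirMori(\PPP^2)$ whose Sarkisov factorisation contains exactly one type~II link of Galois depth $\geq 16$, namely the chosen one, every other link having depth $<16$; by Theorem~\ref{theorem:GroupHomomorphism} the resulting birational self-map of $\PPP^2$ is sent precisely to the associated generator. For the Hirzebruch factor these are the de Jonquières maps of the explicit subgroup of Theorem~\ref{theorem:CremonaNotSimple}, and for the degree $5$ and $6$ factors the analogous maps preserving the pencil of conics. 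As the generators so produced generate the whole free product, the composite homomorphism is onto. The two displayed special cases then follow by specialisation: over $\QQQ$ irreducible polynomials of every degree exist, so $I$, $I_0$, $|J_5|$ and $|J_6|$ are all countably infinite, and a free product of countably many groups each surjecting onto $\ZZZ/2\ZZZ$ maps onto $\bigast_{\NNN}\ZZZ/2\ZZZ$; over a finite field one has $I_0,I\neq\emptyset$ and $\mathcal{N}_2,\mathcal{N}_4\geq 1$, so projecting each of the three factor-types onto a single copy of $\ZZZ/2\ZZZ$ yields $\ZZZ/2\ZZZ\ast\ZZZ/2\ZZZ\ast\ZZZ/2\ZZZ$.
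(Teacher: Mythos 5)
Your overall architecture matches the paper's: restrict the homomorphism of Theorem~\ref{theorem:GroupHomomorphism} to $\Bir_\kk(\PPP^2)$, exhibit inequivalent classes in $\CB(\PPP^2)$ with $K_X^2\in\{5,6,8\}$, and realise generators by explicit maps of $\PPP^2$. But the step you yourself identify as the heart --- the determination of $\Marked(C)$ via anisotropy of the generic fibre --- contains a genuine error, in two directions. First, the question of a link ``remaining inside the class'' is vacuous: a link of type II between Mori conic bundles preserves the fibration by definition, so it always stays in the equivalence class of Definition~\ref{def:EquivalentConicBundles}; and there is no parity constraint on its Galois depth. Any Galois orbit of size $r$ (even or odd) whose points lie on $r$ distinct smooth fibres gives a type~II link inside the class (Lemma~\ref{lemma:ContractedCurveInFiberMeansTypeII}), so even-depth links can perfectly well occur. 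Second, your claim that the fibre over any closed point $q$ of odd degree splits is unjustified and false in general: splitting of the fibre $X_q$ is a condition on the specialisation at $q$ of the quaternion algebra of the generic fibre, and Springer-type results about odd-degree \emph{extensions} say nothing about \emph{specialisations}; over $\QQQ$, say, the fibre over an odd-degree point can easily be pointless. Fortunately the theorem does not require computing $\Marked(C)$ at all --- the paper simply projects the target of Theorem~\ref{theorem:GroupHomomorphism} onto selected factors, discarding any even-depth (or otherwise unwanted) classes. What \emph{is} required, and what your proposal asserts without proof, is the \emph{existence}, for each $n\in I$, of a type~II link of Galois depth $2n+1\geq 17$ in each chosen class, extended to a self-map of $\PPP^2$ whose remaining Sarkisov factors have small depth. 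This is exactly the content of Examples~\ref{example:ConstructionLink4} and~\ref{example:ConstructionLink22}: one places an orbit $q_i=[0:1:r_i]$ of size $2n+1$ on a line, takes the conics through the four base points and each $q_i$, and proves these conics pairwise distinct by the observation that a coincidence would force the Galois group to pair off the points of an odd-cardinality orbit. That is where oddness genuinely enters the proof --- as a device guaranteeing the orbit meets $2n+1$ distinct fibres for an arbitrary field $\kk$ --- not as a constraint satisfied by all links in the class.

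A secondary gap concerns the counts $|J_5|\geq\mathcal{N}_4$ and $|J_6|\geq\mathcal{N}_2$. Saying the orbit is ``recovered from the bundle by contracting the degree-four multisection'' is too quick: an equivalence of conic bundles is merely a fibration-preserving birational map, which need not commute with any chosen contractions to $\PPP^2$, and a priori neither the contraction nor the resulting four points are canonical. The paper's Proposition~\ref{proposition:SameFourPoints} does the real work here: it uses Lemma~\ref{lemma:IrreducibleComponentsExchanged} (vertical $(-1)$-curves map to vertical $(-1)$-curves, relying on the Galois element exchanging the components of a singular fibre), the configuration analysis of the ten $(-1)$-curves in Lemma~\ref{lemma:ThreeSingularFibers}, and the descent statement Lemma~\ref{lemma:AutoDefinedOverk} to show the projective transformation matching the two $4$-point sets is defined over $\kk$. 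Without an argument of this kind the free factors you list are not known to be distinct. The final specialisations to $\QQQ$ and to finite fields in your last paragraph are fine once these two points are repaired.
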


The existence of a group homomorphism from $\Bir_\kk(\PPP^2)$ to a free product of at least three copies of $\ZZZ/2\ZZZ$ implies SQ-universality of the plane Cremona group over finite fields and the field of rational numbers (see \cite[Theorem 3]{schupp73} and \cite[Corollary 8.4]{BLZ19}).
This is new for $\kk=\FFF_2$ but for many fields it is already given in \cite{LZ19}, though with a different construction of the homomorphism.
(The kernel in Theorem~\ref{theorem:Refinement} contains all Bertini involutions, whereas the kernel in \cite{LZ19} contains all de Jonqui\`eres maps.)
In this final section, however, we do not only use \cite[Theorem 2.5]{Iskovskikh96} but also the above mentioned long list of Sarkisov links in the same paper.
\bigskip

The paper is structured as follows:
In Section~\ref{section:CremonaOverPerfectPreliminaries} we introduce the notion of Mori fiber space and Sarkisov link, and state some basic but important remarks about the Galois depth of birational maps.
In Section~\ref{section:Relations} we study relations of $\BirMori(X)$ and prove Theorem~\ref{theorem:GeneratingRelations}.
Then, we make a detour to Galois theory in Section~\ref{section:GaloisTheory} to establish that a perfect field $\kk$ with $[\overbar\kk:\kk]>2$ has arbitrarily large orbits (Lemma~\ref{lemma:LargeGaloisOrbitExists}), and that there are such fields that do not have an orbit of size exactly $8$ (Lemma~\ref{lemma:FieldWithouthSomeOrbit}).
The latter is to contrast our result with \cite{LZ19}.
The main part of this paper ends with Section~\ref{section:GroupHomo}, where we prove Theorem~\ref{theorem:GroupHomomorphism} and finally Theorem~\ref{theorem:CremonaNotSimple}, concluding the main part of the paper.

In Section~\ref{section:RationalMoriCB}, we study rational Mori conic bundles and give the proof of Theorem~\ref{theorem:Refinement}.
As we use here the long list of Sarkisov links from~\cite[Theorem 2.6]{Iskovskikh96}, we provide the reader with a visualisation of it in Section~\ref{section:longlist}.

\bigskip

I would like to thank Philipp Habegger and Lars Kuehne for discussions about Galois theory.
Moreover, I thank Susanna Zimmermann for explaining her results to me, and my thesis advisor J\'er\'emy Blanc for introducing me to the Cremona group.

\section{Preliminaries}\label{section:CremonaOverPerfectPreliminaries}

Consider a perfect field $\kk$ and the Galois group $\Gamma=\Gal(\overbar\kk/\kk)=\Aut(\overbar\kk/\kk)$.
We will work over the algebraic closure $\overbar\kk$, equipped with the Galois action of $\Gamma$.
A perfect field is a field such that every algebraic extension is separable.
We will use the following property of perfect fields:
A field $\kk$ is perfect if and only if the extension $\overbar\kk/\kk$ is normal and separable, which means that $\overbar\kk/\kk$ is Galois.
In particular, the field fixed by the action of $\Gamma$ is exactly $\kk$ \cite[Theorem 1.2, Chapter VI]{lang05}.
So a point is fixed by the Galois action if and only if it is defined over $\kk$.

We are interested in surfaces.
\textbf{In the sequel, we assume all surfaces to be smooth and projective.}
A (rational) map $\varphi\colon X\dashto Y$ is always supposed to be defined over $\kk$ (and thus $X$ and $Y$ are defined over $\kk$, too).
However, we will look at $\overbar\kk$-points and $\overbar\kk$-curves on our surfaces.

\subsection{Spaces of interest: Mori fiber spaces}

\begin{definition}\label{def:RelativePicGroup}
  Let $X$ be a surface and $\pi\colon X\to B$ a surjective morphism to a smooth variety $B$. The \textit{relative Picard group} is the quotient $\Pic_\kk(X/B):=\Pic_\kk(X)/\pi^*\Pic_\kk(B)$.
\end{definition}

We study Mori fiber spaces and only consider surfaces $X$ over $\kk$. So the definition is as follows:

\begin{definition}\label{def:MoriFiberSpace}
  A surjective morphism $\pi\from X\to B$ with connected fibers, where $X$ is a smooth surface and $B$ is smooth, is called a \textit{Mori fiber space} if the following conditions are satisfied:
  \begin{enumerate}
    \item
    $\dim(B)<\dim(X)$,
    \item
    $\rk\Pic_\kk(X/B)=1$ (relative Picard rank),
    \item
    $-K_X\cdot D>0$ for all effective curves $D$ on $X$ that are contracted by $\pi$.
  \end{enumerate}
  If $B$ is $1$-dimensional, we say that $X\to B$ is a \textit{Mori conic bundle}.
\end{definition}

Note that we say that an arbitrary fibration (not necessarily a Mori fiber space) $X\to B$ with $B$ one-dimensional is a \textit{conic bundle}, if a general fiber is isomorphic to $\PPP^1(\bar\kk)$ and any singular fiber is the union of two $(-1)$-curves intersecting at one point.

\begin{remark}\label{remark:MFSToPointOrCurve}
  Any Mori fiber space $X\to B$ is of one of the two following forms, depending on the dimension of the base $B$:
  \begin{enumerate}
    \item
    If $B=\{\asterisk\}$ then $\rk\Pic_\kk(X)=1$ and so $X$ is a del Pezzo surface.
    \item
    If $B$ is a curve, then $X\to B$ is a Mori conic bundle, and the fiber of each $\overbar\kk$-point of $B$ is isomorphic to a reduced conic in $\PPP^2$ (irreducible or reducible).
    In particular, it is a conic bundle.
    Moreover, the two irreducible components of any singular fiber lie in the same Galois orbit.
    (Otherwise, the orbit of one component consists of disjoint $(-1)$-curves.
    Contracting them yields a surface $S\to B$ with $\rk(\Pic(S/B))\geq 1$.
    Then $\rk(\Pic_\kk(X/B))=\rk(\Pic_\kk(S/B))+1\geq 2$, a contradiction.)
  \end{enumerate}
  If $B$ is $1$-dimensional and $X$ is geometrically rational with a $\kk$-point, the following lemma implies that $B=\PPP^1$ and so $\rk\Pic_\kk(X)=2$.
\end{remark}

\begin{lemma}\label{lemma:GeometricallyRationalIffGenus0}
  Let $\pi\colon X\to B$ be a conic bundle.
  Then $X$ is geometrically rational if and only if the genus of $B$ is $0$.
  In this case, either $B=\PPP^1$ or $B$ is a smooth projective curve without $\kk$-point.
\end{lemma}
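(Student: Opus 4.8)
The plan is to pass to the algebraic closure and thereby reduce the equivalence to two classical facts: that the base of a fibration from a rational surface is rational, and that conic bundles over a rational base are rational. Since $X$ is geometrically rational precisely when $X_{\overbar\kk}$ is a rational surface, and since the genus of $B$ equals the genus of the smooth projective curve $B_{\overbar\kk}$, I would first base change along $\overbar\kk/\kk$ and study the conic bundle $\pi_{\overbar\kk}\colon X_{\overbar\kk}\to B_{\overbar\kk}$ over the algebraically closed field $\overbar\kk$, where the general fiber is literally $\PPP^1$.

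For the ``only if'' direction I would argue that rationality of $X_{\overbar\kk}$ forces $B$ to have genus $0$: composing a dominant rational map $\PPP^2_{\overbar\kk}\dashto X_{\overbar\kk}$ with $\pi_{\overbar\kk}$ exhibits $B_{\overbar\kk}$ as a curve dominated by $\PPP^2$, hence unirational, and by L\"uroth's theorem (valid for curves over any field) this gives $B_{\overbar\kk}\cong\PPP^1_{\overbar\kk}$, i.e.\ genus $0$. For the ``if'' direction, if $B$ has genus $0$ then $B_{\overbar\kk}\cong\PPP^1_{\overbar\kk}$, and the generic fiber of $\pi_{\overbar\kk}$ is a smooth conic over the function field $\overbar\kk(t)$. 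Since $\overbar\kk(t)$ is a $C_1$-field by Tsen's theorem, this conic carries a rational point and is therefore isomorphic to $\PPP^1_{\overbar\kk(t)}$; hence $X_{\overbar\kk}$ is birational to $\PPP^1\times\PPP^1$ and in particular rational.

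It remains to establish the dichotomy in the genus-$0$ case, which I would treat entirely over $\kk$. Either $B$ has no $\kk$-point, in which case we are in the second alternative, or $B$ carries a $\kk$-point $p$, which is a $\kk$-rational divisor of degree $1$. Riemann--Roch on the genus-$0$ curve $B$ then yields $\dim_\kk H^0(B,\mathcal{O}_B(p))=2$ (the term $H^0(B,\mathcal{O}_B(K_B-p))$ vanishes since $\deg(K_B-p)=-3<0$), and the resulting base-point-free pencil defines a degree-$1$ morphism $B\to\PPP^1$, which is therefore an isomorphism $B\cong\PPP^1_\kk$.

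I expect the only genuinely substantive input to be Tsen's theorem in the ``if'' direction, which supplies the rational section of the generic conic over $\overbar\kk(t)$ and thus trivialises the conic bundle birationally; the remaining steps are standard. The main conceptual point to keep straight is the sharp separation between the two geometric invariants (rationality and genus), which are insensitive to the base field and drive the equivalence, and the arithmetic dichotomy at the end, which hinges entirely on whether $B$ admits a $\kk$-point.
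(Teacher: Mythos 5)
Your proof is correct and follows essentially the same route as the paper: the ``only if'' direction via L\"uroth applied to the curve $B_{\overbar\kk}$ dominated by $\PPP^2_{\overbar\kk}$, and the ``if'' direction via Tsen's theorem (the paper cites the corollary in Koll\'ar giving a birational map $X_{\overbar\kk}\dashto B_{\overbar\kk}\times\PPP^1$, which is exactly your $C_1$-field argument on the generic fiber). Your only addition is the explicit Riemann--Roch argument for the final dichotomy ($B\cong\PPP^1_\kk$ when a $\kk$-point exists), which the paper treats as standard and leaves implicit; that argument is correct as written.
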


\begin{proof}
  Assume that $X$ is geometrically rational, so there is a birational map $\varphi\colon X\dashto \PPP^2$ defined over $\overbar\kk$. Hence, $\pi\varphi^{-1}\colon\PPP^2\dashto B$ is a dominant rational map, defined over $\overbar\kk$, and so $B$ is geometrically unirational.
  As $B$ is a curve, the solution of L\"uroth's problem implies that $B$ is geometrically rational. So the genus of $B$ is zero.

  The converse direction is a corollary of Tsen's theorem \cite[Corollary 6.6.2, p. 232]{kollar99}, which states that there is a birational map from $X$ to $B\times\PPP^1$ defined over $\overbar\kk$ and hence $X$ is geometrically rational.
\end{proof}

\begin{remark}\label{remark:ConicBundleOverP1}
  For a conic bundle $X\to\PPP^1$, there are $8-K_X^2$ singular fibers on $X$.
  (Over $\bar\kk$, contracting one irreducible component of each singular fiber gives a morphism $X\to Y$, where $Y$ is a $\PPP^1$-bundle over $\PPP^1$, that is, $Y$ is a Hirzebruch surface.
  Hence, $K_X^2=K_Y^2-r=8-r$, where $r$ is the number of singular fibers on $X$.)
\end{remark}

\begin{definition}\label{definition:PreserveFibrationAndIsoOfMFS}
  Let $X_1\to B_1$ and $X_2\to B_2$ be two Mori fiber spaces.
  We say that a birational map $\varphi\colon X_1\dashto X_2$ \textit{preserves the fibration} if the diagram
  \begin{center}
    \begin{tikzpicture}
      \matrix (m) [matrix of math nodes,row sep=1em,column sep=1em,minimum width=2em]
      {
          X_1 & & X_2  \\
          B_1 & & B_2 \\};
          \path[-stealth]
          (m-1-1) edge[dashed] node[above] {$\varphi$} (m-1-3)
          (m-1-1) edge node {} (m-2-1)
          (m-2-1) edge[white] node[black] {$\simeq$} (m-2-3)
          (m-1-3) edge node {} (m-2-3)
          ;
        \end{tikzpicture}
  \end{center} commutes.
  Moreover, if $\varphi$ is also an isomorphism we say that $\varphi\colon X_1\simto X_2$ is an \textit{isomorphism of Mori fiber spaces}.
\end{definition}

\subsection{Maps of interest: Sarkisov links}

\begin{definition}\label{def:BirMori}
  Let $X\to B$ be a Mori fiber space.
  We denote by $\BirMori(X)$ the groupoid consisting of all birational maps $\varphi\colon X_1\dashto X_2$ where $X_i\to B_i$ are Mori fiber spaces for $i=1,2$ such that $X_1$ and $X_2$ are birational to $X$.
\end{definition}

\begin{definition}\label{def:RelativeBir}
  For a Mori fiber space $X\stackrel{\pi}\to W$ we denote by $\Bir(X/W)\subset\Bir(X)$ the subgroup of birational maps $f\in\Bir(X)$ that preserve the fibration.
\end{definition}

\begin{definition}\label{def:SarkisovLink}
  A \textit{Sarkisov link} (or simply \textit{link}) is a birational map $\varphi\colon X_1\dashto X_2$ between two Mori fibrations $\pi_i\colon X_i\to B_i$, $i=1,2$, that is part of a commuting diagram of one of the following four types:%
  \begin{center}
    \begin{tabular}{lp{4cm}p{6cm}}
      Type I\vspace{0pt} & \centering\begin{tikzpicture}[baseline=0]
       \matrix (m) [matrix of math nodes,row sep=1em,column sep=1em,minimum width=2em]
       {
           &   &  \\
           X_1 &   & X_2 \\
           \{\asterisk\}=B_1 &  & B_2 \\};
       \path[-stealth]
       (m-2-1) edge[dashed] node[above] {$\varphi$} (m-2-3)
       (m-2-1) edge node {} (m-3-1)
       (m-2-3) edge node {} (m-3-3)
       (m-3-3) edge node {} (m-3-1)
       ;
     \end{tikzpicture}%

     & {where $\varphi^{-1}\colon X_2\to X_1$ is the blow up of one orbit of $\Gal(\overbar\kk/\kk)$.}\\
   \end{tabular}
 \end{center}
 \begin{center}
\begin{tabular}{lp{4cm}p{6cm}}
     Type II &\vspace{-20pt}\centering\begin{tikzpicture}
      \matrix (m) [matrix of math nodes,row sep=1em,column sep=1em,minimum width=2em]
      {
          &  Z &  \\
          X_1 &   & X_2 \\
          B_1 &  & B_2 \\};
      \path[-stealth]
      (m-2-1) edge[dashed] node[above] {$\varphi$} (m-2-3)
      (m-1-2) edge node[left] {$\sigma_1$} (m-2-1)
      (m-1-2) edge node[right] {$\sigma_2$} (m-2-3)
      (m-2-1) edge node {} (m-3-1)
      (m-2-3) edge node {} (m-3-3)
      (m-3-1) edge node[above] {$\simeq$}  (m-3-3)
      ;
    \end{tikzpicture}%
    &
    where $\sigma_i\colon Z\to X_i$ is a blow-up of one orbit.\\
  \end{tabular}
\end{center}
\begin{center}
\begin{tabular}{lp{4cm}p{6cm}}
    Type III & \vspace{-20pt}\centering\begin{tikzpicture}
     \matrix (m) [matrix of math nodes,row sep=1em,column sep=1em,minimum width=2em]
     {
         &   &  \\
         X_1 &   & X_2 \\
         B_1 &  & \{\asterisk\}=B_2 \\};
     \path[-stealth]
     (m-2-1) edge[] node[above] {$\varphi$} (m-2-3)
     (m-2-1) edge node {} (m-3-1)
     (m-2-3) edge node {} (m-3-3)
     (m-3-1) edge node[above] {}  (m-3-3)
     ;
   \end{tikzpicture}%
    &
   where $\varphi\colon X_1\to X_2$ is the blow-up of one orbit.\\
 \end{tabular}
\end{center}
\begin{center}
\begin{tabular}{lp{4cm}p{6cm}}
   Type IV &     \vspace{-20pt} \centering\begin{tikzpicture}
         \matrix (m) [matrix of math nodes,row sep=1em,column sep=1em,minimum width=2em]
         {
             &  &  \\
             X_1 &   & X_2 \\
             B_1 &  & B_2 \\};
         \path[-stealth]
         (m-2-1) edge[] node[above] {$\varphi$} node[below] {$\simeq$}(m-2-3)
         (m-2-1) edge node {} (m-3-1)
         (m-2-3) edge node {} (m-3-3)
         ;
       \end{tikzpicture}%
       &
       where $\varphi$ is an isomorphism that does not preserve the fibration and $B_1$, $B_2$ are curves.
    \end{tabular}
  \end{center}

\end{definition}

\begin{remark}
  In fact, $B_1$ in a link of type III (respectively $B_2$ in a link of type I) is always $\PPP^1$ (see \cite{Iskovskikh96}).
  In a link of type IV, $B_1$ is a smooth curve of genus $0$ (follows from the proof of Lemma~\ref{lemma:IskoForNotGeomRational} and Lemma~\ref{lemma:GeometricallyRationalIffGenus0}), but not necessarily $\PPP^1$:
  For example, let $C$ be a conic with no $\kk$-point.
  Then the change of fibration of $C\times C$ is a link of type IV.
\end{remark}

\begin{theorem}[\hspace{1sp}{\cite[Theorem 2.5]{Iskovskikh96} and \cite[Appendix]{Corti95}}]\label{theorem:Iskovskikh}
  Let $X$ be a geometrically rational Mori fiber space.
  The groupoid $\BirMori(X)$ is generated by Sarkisov links and isomorphisms of Mori fiber spaces.
\end{theorem}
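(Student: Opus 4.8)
The plan is to run the Galois-equivariant two-dimensional Sarkisov program. Given a birational map $\varphi\colon X_1\dashto X_2$ between Mori fiber spaces $X_i\to B_i$, I would attach to it a numerical complexity, its \emph{Sarkisov degree}, and show that unless $\varphi$ is already an isomorphism of Mori fiber spaces one can peel off a single Sarkisov link that strictly decreases this complexity; iterating and invoking termination then exhibits $\varphi$ as a finite composition of links and isomorphisms, which is exactly the asserted generation of $\BirMori(X)$.

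First I would resolve $\varphi$ by a common resolution $p\colon Z\to X_1$, $q\colon Z\to X_2$ with $\varphi=q\circ p^{-1}$; over the perfect field $\kk$ both $p$ and $q$ are composites of blow-ups of $\Gamma$-orbits of points, by Galois descent from the classical factorisation over $\overbar\kk$. Fixing a relatively very ample system on $X_2$ and letting $\mathcal{H}$ be its strict transform on $X_1$, I would write $\mathcal{H}\sim_{\QQQ}-\mu K_{X_1}$ modulo pullbacks from $B_1$; the threshold $\mu$ together with the largest multiplicity of $\mathcal{H}$ along an orbit in its base locus records how far $\varphi$ is from preserving the structure. Everything is carried out $\Gamma$-equivariantly, so base points and contracted curves are replaced by their Galois orbits, and the surface MMP (equivariant Castelnuovo contraction of orbits of $(-1)$-curves) stays within Mori fiber spaces.

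The heart of the argument is the Noether--Fano--Iskovskikh inequality: if $\varphi$ is not an isomorphism of Mori fiber spaces, then the pair $(X_1,\tfrac1\mu\mathcal{H})$ fails to be canonical, so $\mathcal{H}$ has a maximal orbit in its base locus. Blowing up this maximal center and running a relative MMP produces a Sarkisov link $\chi\colon X_1\dashto X_1'$; which of the four types occurs is read off from how the program affects the base and the relative Picard rank (a fiber-type final contraction that raises or lowers $\dim B$ gives type~I or~III, a divisorial final contraction preserving $\dim B$ gives type~II, and a change of extremal ruling with $\dim B$ fixed gives type~IV). Replacing $\varphi$ by $\varphi\circ\chi^{-1}$, one checks that the Sarkisov degree has strictly dropped.

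The main obstacle is \emph{termination}: one must check that the Sarkisov degree takes values in a well-ordered set so that the procedure halts after finitely many links. In dimension two this is classical, since the complexity is governed by the finitely many base orbits and their multiplicities on the resolution $Z$ and decreases strictly; once it can decrease no further, $\varphi\circ\chi_k^{-1}\cdots\chi_1^{-1}$ is an isomorphism of Mori fiber spaces, and rearranging gives the desired factorisation of $\varphi$ into links of types~I--IV and isomorphisms. This is the content of the results of Iskovskikh and Corti cited in the statement.
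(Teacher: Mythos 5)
You should first note that the paper offers no proof of this statement at all: it is imported wholesale from \cite[Theorem 2.5]{Iskovskikh96} and \cite[Appendix]{Corti95}, so the only meaningful comparison is with the proofs in those references, and your outline does follow their strategy (the Galois-equivariant two-dimensional Sarkisov program with a decreasing Sarkisov degree). The equivariant setup is fine: over a perfect field base points and $(-1)$-curves come in $\Gal(\overbar\kk/\kk)$-orbits, resolutions factor into blow-ups of orbits, and the degree $\mu$ has bounded denominators (compare Lemma~\ref{lemma:DivisorsOnGeometricallyRationalCB}, where $\mu\in\frac{1}{2}\ZZZ$ on a conic bundle), so termination of the lexicographic induction is genuinely unproblematic in dimension two.

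However, your central step misstates the Noether--Fano--Iskovskikh inequality, and it fails exactly where links of type III and IV are needed. You claim that if $\varphi$ is not an isomorphism of Mori fiber spaces, then $(X_1,\tfrac{1}{\mu}\mathcal{H})$ is not canonical, so that $\mathcal{H}$ has a maximal orbit in its base locus which you can blow up. The correct statement is a dichotomy: \emph{either} the pair is not canonical, and one untwists by a link of type I or II beginning with the blow-up of a maximal orbit, \emph{or} the pair is canonical but $K_{X_1}+\tfrac{1}{\mu}\mathcal{H}$ is not nef, in which case $\mathcal{H}$ may be base-point free and there is nothing to blow up; one instead plays the two-ray game on $X_1$ itself, contracting the extremal ray other than the fiber class, and this branch is precisely the source of links of type III and IV. Concretely, the blow-down $\FFF_1\to\PPP^2$ and the exchange of rulings on $\PPP^1\times\PPP^1$ lie in $\BirMori(X)$, are not isomorphisms of Mori fiber spaces, yet have empty base locus, so your ``maximal orbit'' does not exist for them; and your remark that the type is ``read off from the final contraction'' cannot repair this, since any link produced by first blowing up a maximal center is necessarily of type I or II. With the second branch of the dichotomy restored (and the degree checked to drop in both branches), your argument becomes the standard proof of the cited sources.
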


The following two lemmas follow from the classification of Sarkisov links \cite[Theorem 2.6]{Iskovskikh96}.
In order to keep the promise of providing elementary proofs, we reprove the statement for completeness.

\begin{lemma}\label{lemma:TypeIIIDelPezzo}
  Let $\varphi\colon X_1\to X_2$ be a link of type III.
  \begin{enumerate}
    \item \label{item:TypeIIIDelPezzo--NE}
    The cone of effective curves $\NE_\QQQ(X_1)$ equals $\QQQ_{\geq 0} f+\QQQ_{\geq 0} E$, where $f$ is a fiber of the Mori fiber space $X_1\to B_1$ and $E$ is the exceptional locus of $\varphi$.
    \item \label{item:TypeIIIDelPezzo--dP}
    $X_1$ is a del Pezzo surface.
    \item \label{item:TypeIIIDelPezzo--8}
    The size $r$ of the orbit blown-up by $\varphi$ is less or equal than $8$.
  \end{enumerate}
\end{lemma}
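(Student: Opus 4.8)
The plan is to prove the three parts in sequence, with part~\eqref{item:TypeIIIDelPezzo--dP} as the conceptual heart and parts~\eqref{item:TypeIIIDelPezzo--NE} and~\eqref{item:TypeIIIDelPezzo--8} as consequences. For part~\eqref{item:TypeIIIDelPezzo--NE}, recall that $X_1\to B_1$ is a Mori conic bundle (so $B_1$ is a curve and, being part of a type~III link, $B_1=\PPP^1$), hence $\rk\Pic_\kk(X_1)=2$ by Remark~\ref{remark:MFSToPointOrCurve}. I would observe that $\varphi\colon X_1\to X_2$ being a type~III link means $\varphi$ is the blow-up (morphism) of one Galois orbit onto a del Pezzo surface $X_2$ of Picard rank $1$, with exceptional locus $E$. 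Since $\NE_\QQQ(X_1)$ is a $2$-dimensional cone, it suffices to identify its two extremal rays. One ray is the fiber class $f$, which is extremal because it defines the contraction $X_1\to B_1$ (its class generates the kernel of $\pi_*$ and satisfies $f^2=0$, $f\cdot(-K_{X_1})>0$). The other ray is $E$, which is contracted by $\varphi$ to $X_2$; extremality follows because contracting $E$ realizes the other elementary contraction of the rank-$2$ surface. I would argue that $f$ and $E$ are linearly independent in $\Pic_\kk(X_1)_\QQQ$ (e.g.\ $f^2=0$ but $E^2<0$, or $\pi_*E\neq 0$ while $\pi_* f=0$) and that the effective cone cannot extend beyond them, since any irreducible curve has nonnegative intersection with both nef-generating-dual classes.

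For part~\eqref{item:TypeIIIDelPezzo--dP}, the goal is to show $X_1$ is del Pezzo, i.e.\ $-K_{X_1}$ is ample. By the Nakai--Moishezon or Kleiman criterion it is enough to show $-K_{X_1}$ is strictly positive on the two extremal rays $f$ and $E$ found above and that $K_{X_1}^2>0$. On $f$ we have $-K_{X_1}\cdot f>0$ by the Mori fiber space condition~(3) in Definition~\ref{def:MoriFiberSpace}. On $E$: since $\varphi$ is a type~III link contracting the orbit, each component of $E$ is a $(-1)$-curve (blow-up of a smooth point over $\overbar\kk$), so $-K_{X_1}\cdot E>0$ as well. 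Thus $-K_{X_1}$ lies in the interior of the dual of $\NE_\QQQ(X_1)$, hence is ample once we also check $K_{X_1}^2>0$; this last inequality I would get from the fact that $X_2$ is del Pezzo (Picard rank $1$ Mori fiber space over a point) together with the blow-up formula $K_{X_1}^2=K_{X_2}^2-r$, controlling the sign in the next part.

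For part~\eqref{item:TypeIIIDelPezzo--8}, I would pass to the algebraic closure $\overbar\kk$: over $\overbar\kk$ the map $\varphi$ becomes the blow-up of $r$ distinct points (the orbit has size $r$), so $X_{1,\overbar\kk}$ is a smooth del Pezzo surface obtained by blowing up $r$ points on the del Pezzo surface $X_{2,\overbar\kk}$. Using the blow-up relation $K_{X_1}^2 = K_{X_2}^2 - r$ and the classical bound that a del Pezzo surface satisfies $1\le K^2\le 9$, the del Pezzo property of $X_1$ (just proved) forces $K_{X_1}^2\ge 1$, whence $r = K_{X_2}^2 - K_{X_1}^2 \le 9-1 = 8$. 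The main obstacle is being careful that contracting a Galois orbit (rather than a single $\kk$-point) still lands in the del Pezzo regime and that the blown-up points are in sufficiently general position for $X_1$ to remain del Pezzo; but this is guaranteed precisely because $X_1$ is \emph{given} to be a Mori fiber space in a Sarkisov link, so I can invoke part~\eqref{item:TypeIIIDelPezzo--dP} rather than verify general position by hand. The arithmetic then closes the argument.
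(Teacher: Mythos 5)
Your proposal contains a genuine circularity between parts (2) and (3). In part (2) you reduce ampleness of $-K_{X_1}$ to positivity on the two rays \emph{plus} the check $K_{X_1}^2>0$, and you propose to obtain $K_{X_1}^2>0$ from the blow-up formula $K_{X_1}^2=K_{X_2}^2-r$ together with $X_2$ being del Pezzo, ``controlling the sign in the next part.'' But $X_2$ being del Pezzo only gives $1\le K_{X_2}^2\le 9$; the inequality $K_{X_2}^2-r>0$ requires $r<K_{X_2}^2$, which is essentially the bound $r\le 8$ of part (3) --- and your proof of part (3) explicitly ``invokes part (2)'' to get $K_{X_1}^2\ge 1$. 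Neither statement is established first, so as written the argument is a loop and proves nothing. The check $K_{X_1}^2>0$ is forced on you only because you reach for Nakai--Moishezon; the repair is what the paper does, namely Kleiman's criterion alone: once $\NE_\QQQ(X_1)=\QQQ_{\geq 0}f+\QQQ_{\geq 0}E$ is known (and this cone is closed), adjunction gives $-K_{X_1}\cdot f=2$ and $-K_{X_1}\cdot E=r$, hence $-K_{X_1}\cdot(\alpha f+\beta E)=2\alpha+\beta r>0$ for all nonnegative $(\alpha,\beta)\neq(0,0)$, and Kleiman yields ampleness with no self-intersection hypothesis whatsoever. With (2) secured independently, part (3) then closes exactly as you wrote it: $1\le K_{X_1}^2=K_{X_2}^2-r\le 9-r$, so $r\le 8$. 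The dependency must run $(1)\Rightarrow(2)\Rightarrow(3)$, not in a cycle.

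On part (1): your outline is correct but leans on heavier machinery than the paper, and leaves the decisive containment unverified. Identifying $f$ and $E$ as the two extremal rays via the two elementary contractions of a rank-$2$ surface is an appeal to the cone theorem, whereas the paper --- which advertises elementary proofs --- simply writes an arbitrary effective class as $C\sim\alpha f+\beta E$ (possible since $\rk\Pic_\kk(X_1)=2$ and $f,E$ are independent because $f^2=0$ while $E^2=-r<0$) and extracts $\beta\geq 0$ from $0\leq f\cdot C=\beta\,E\cdot f$ and $\alpha\geq 0$ from $0\leq E\cdot C=\alpha\,E\cdot f+\beta E^2$ for $C\neq E$, using that $f\cdot E>0$ because no component of $E$ lies in a fiber (Remark~\ref{remark:MFSToPointOrCurve}). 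Your closing sentence about ``nonnegative intersection with both nef-generating-dual classes'' gestures at this duality but never carries out the computation; note also that $E$ itself is not nef, so the intersection inequalities must be applied with the care the paper takes (excluding $C=E$), not quoted as a formal duality statement.
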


\begin{proof}
  As $E$ is an orbit of $r$ disjoint $(-1)$-curves, none of these is contained in a fiber of $X_1\to B_1$ (see Remark~\ref{remark:MFSToPointOrCurve}).
  Hence, $f\cdot E>0$ and so $f$ and $E$ are not linearly equivalent because $f^2=0$ and $E^2=-r<0$.
  As the rank of the Picard group $\Pic_\kk(X_1)$ is $2$ (since the rank of the Picard group of $X_2$ is $1$), any curve $C$ in $\NE_\QQQ(X_1)$ is linearly equivalent to $\alpha f+\beta E$ for some $\alpha,\beta\in\QQQ$.
  We want to prove that $\alpha,\beta\geq 0$.
  If $C=E$, then $\alpha=0$ and $\beta=1$, so assume that $C\neq E$.
  We compute
  \[
    0\leq f\cdot C=\beta E\cdot f,
  \]
  hence $\beta\geq 0$.
  We also find
  \[
    0\leq E\cdot C = \alpha E\cdot f+\beta E^2,
  \]
  which implies that $\alpha\geq 0$, since $\beta E^2\leq 0$ and $E\cdot f>0$.
  This proves~\ref{item:TypeIIIDelPezzo--NE}.

  For~\ref{item:TypeIIIDelPezzo--dP}, we prove ampleness of $-K_{X_1}$ using Kleiman's Criterion.
  Note that the expression of $\NE_\QQQ(X_1)$ in~\ref{item:TypeIIIDelPezzo--NE} is closed, hence $\overbar{\NE_\QQQ(X_1)}\setminus\{0\}=\left(\QQQ_{\geq 0} f+\QQQ_{\geq 0} E\right)\setminus\{0\}$.
  We compute for $(\alpha,\beta)\in\QQQ_{\geq0}^2\setminus\{(0,0)\}$
  \[
    -K_{X_1}(\alpha f+\beta E)=\alpha(-K_{X_1}f)+\beta (-K_{X_1}E)=2\alpha +\beta r>0,
  \]
  where we used the adjunction formula to compute $-K_{X_1}f$ and $-K_{X_1}E$.
  Therefore, Kleiman's Criterion implies that $-K_{X_1}$ is ample, and so~\ref{item:TypeIIIDelPezzo--dP} holds.

  For~\ref{item:TypeIIIDelPezzo--8}, note that~\ref{item:TypeIIIDelPezzo--dP} implies in particular that $0<(-K_{X_1})^2\leq 9$.
  Since $\varphi\colon X_1\to X_2$ is a blow-up of $r$ points (over $\overbar\kk$), we also have $(-K_{X_1})^2=(-K_{X_2})^2-r$.
  This gives $r<(-K_{X_1})^2\leq 9$, hence $r\leq 8$.
\end{proof}

\begin{lemma}\label{lemma:TypeIIDelPezzo}
  For $i=1,2$, let $X_i\to\{\asterisk\}$ be two Mori fiber spaces and let $\varphi\colon X_1\dashto X_2$ be a link of type II that has a resolution $\sigma_i\colon Z\to X_i$,  where $\sigma_i$ is the blow-up of an orbit of size $r_i$ with exceptional divisor $E_i$.
  \begin{enumerate}
    \item\label{item:TypeIIDelPezzo--EffectiveCone}
    The cone of effective curves $\NE_\QQQ(Z)$ equals $\QQQ_{\geq0}E_1+\QQQ_{\geq 0}E_2$.
    \item\label{item:TypeIIDelPezzo--SmallOribt} $Z$ is a del Pezzo surface.
    In particular, $r_i\leq 8$ for $i=1,2$.
  \end{enumerate}
\end{lemma}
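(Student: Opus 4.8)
The plan is to mirror the argument for type III links in Lemma~\ref{lemma:TypeIIIDelPezzo}, with the fiber class $f$ replaced by the second exceptional divisor $E_2$. Since each $X_i\to\{\asterisk\}$ is a Mori fiber space over a point, Remark~\ref{remark:MFSToPointOrCurve} gives $\rk\Pic_\kk(X_i)=1$, and blowing up one orbit raises the Picard rank by one, so $\rk\Pic_\kk(Z)=2$. Thus $\Pic_\kk(Z)\otimes\QQQ$ is two-dimensional, and it suffices to show that $E_1,E_2$ are linearly independent and span the two extremal rays of $\NE_\QQQ(Z)$.

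First I would establish linear independence. Let $A_i$ be an ample generator of $\Pic_\kk(X_i)\otimes\QQQ$ and set $h_i=\sigma_i^*A_i$. By the projection formula $h_1\cdot E_1=A_1\cdot(\sigma_1)_*E_1=0$, since $E_1$ is $\sigma_1$-exceptional; on the other hand the two orbits are distinct and hence disjoint, so $E_2$ is not contracted by $\sigma_1$, $(\sigma_1)_*E_2$ is a nonzero effective curve, and $h_1\cdot E_2>0$ by ampleness of $A_1$. Since $E_1$ and $E_2$ pair differently with $h_1$, they are linearly independent and hence form a basis of $\Pic_\kk(Z)\otimes\QQQ$, so every class can be written as $\alpha E_1+\beta E_2$.

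Next, for an irreducible curve $C$ with $C\neq E_1,E_2$, I would determine the signs of $\alpha,\beta$. Computing $h_1\cdot C=\beta(h_1\cdot E_2)$ and noting that $h_1\cdot C=A_1\cdot(\sigma_1)_*C>0$ (as $(\sigma_1)_*C$ is effective and nonzero) forces $\beta>0$; by the symmetric computation with $h_2$ one gets $h_2\cdot C=\alpha(h_2\cdot E_1)>0$, hence $\alpha>0$. Together with the edge cases $C=E_1$ and $C=E_2$, this proves $\NE_\QQQ(Z)=\QQQ_{\geq0}E_1+\QQQ_{\geq0}E_2$, which is~\ref{item:TypeIIDelPezzo--EffectiveCone}. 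For~\ref{item:TypeIIDelPezzo--SmallOribt} I would again apply Kleiman's criterion: the cone in~\ref{item:TypeIIDelPezzo--EffectiveCone} is closed, and since each $E_i$ is an orbit of $r_i$ disjoint $(-1)$-curves, adjunction gives $-K_Z\cdot E_i=r_i$, so $-K_Z\cdot(\alpha E_1+\beta E_2)=\alpha r_1+\beta r_2>0$ on the whole punctured cone; thus $-K_Z$ is ample and $Z$ is del Pezzo. Finally, $0<K_Z^2=K_{X_i}^2-r_i$ together with $K_{X_i}^2\leq9$ yields $r_i<K_{X_i}^2\leq9$, hence $r_i\leq8$.

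The computation itself is routine once the setup parallels Lemma~\ref{lemma:TypeIIIDelPezzo}, so the only points requiring genuine care are the two non-contraction/effectivity claims—that $E_2$ (respectively $E_1$) is not contracted by $\sigma_1$ (respectively $\sigma_2$), which rests on the two orbits being distinct and therefore disjoint—and the adjunction count $-K_Z\cdot E_i=r_i$ for a full Galois orbit of $(-1)$-curves rather than for a single curve. These are the steps I would be most careful to justify explicitly.
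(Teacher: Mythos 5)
Your proposal is correct and takes essentially the same approach as the paper: since $\Pic_\kk(X_i)$ has rank one with ample generator proportional to $-K_{X_i}$, pairing curve classes with $h_i=\sigma_i^*A_i$ is numerically the same test as the paper's pushforward argument showing that $E_1$ and $E_2$ are the two extremal rays, and your part~(2) (adjunction giving $-K_Z\cdot E_i=r_i$, Kleiman's criterion, and $0<K_Z^2=K_{X_i}^2-r_i$) coincides with the paper's. The one point you flag but leave slightly informal --- why neither $E_i$ is contracted by the other blow-down --- is justified in the paper exactly as you suspect: $\varphi$ is not an isomorphism, so $E_1\neq E_2$, and Galois transitivity on the components of each single-orbit divisor $E_i$ then rules out any common component.
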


\begin{proof}
  For~\ref{item:TypeIIDelPezzo--EffectiveCone} it is enough to show that $E_1$ and $E_2$ are (different) extremal rays in the cone of effective $1$-cycles $\NE(Z)$ since $\rk\Pic_\kk(Z)=2$.
  First, we remark that $\NE_\QQQ(X_1)=-K_{X_1}\cdot\QQQ_{\geq0}$:
  Having $\rk\Pic(X_1)=1$, there is a curve $C$ on $X_1$ such that $\NE_\QQQ(X_1)=\QQQ_{\geq0}C$.
  As $-K_{X_1}$ is ample (since $X_1$ is del Pezzo), $-K_{X_1}$ is effective and non-zero, hence $-K_{X_1}=\lambda C$ for some $\lambda>0$.
  Therefore, $\NE_\QQQ(X_1)=-K_{X_1}\QQQ_{\geq0}$.
  Now, let $D\in\NE_\QQQ(Z)$ be such that $D$ is no multiple of $E_1$.
  Hence, $\pi_*(D)$ is effective.
  As $D\in\Pic(Z)=\QQQ E_1+\QQQ\pi^*(-K_{X_1})$, we can write $D\sim a\pi^*(-K_{X_1})+bE_1$ with $a\geq 0$.
  Therefore, $\NE_\QQQ(Z)$ lies in $\QQQ_{\geq0}\pi^*(-K_{X_1})+\QQQ E_1$.
  Hence, $E_1$ is extremal. The same argument works for $E_2$.
  The two extremal rays $E_1$ and $E_2$ are different because $E_1$ is effective with $E_1^2<0$ but $E_1E_2\geq 0$ ($\varphi$ is not an isomorphism, hence $E_1$ and $E_2$ are distinct).

  For~\ref{item:TypeIIDelPezzo--SmallOribt}, compute with the adjunction formula $-K_ZE_i=r_i$.
  For $\alpha,\beta \in\QQQ_{\geq0}$, not both zero, this gives \[-K_Z(\alpha E_1+\beta E_2)=\alpha r_1+\beta r_2>0\] and Kleiman criterion implies that $-K_Z$ is ample, hence $Z$ is del Pezzo.
  Note that $r_i\leq 8$ follows in the same way as the proof of~\ref{item:TypeIIIDelPezzo--8} of Lemma~\ref{lemma:TypeIIIDelPezzo}.
\end{proof}

This leads us to one of the main points of this article: The \textit{Galois depth} of birational maps, which plays in our article the role of the covering gonality in \cite[Theorem D]{BLZ19}.

\subsection{Galois depth of birational maps}

\begin{definition}\label{def:Galois depth}
  Let $\varphi\colon X\dashto X'$ be a birational map between surfaces. The \textit{Galois depth} of $\varphi$ is the maximal size of all orbits contained in the base loci of $\varphi$ or $\varphi^{-1}$.
\end{definition}

\begin{remark} \label{remark:SmallGalois depth}
  A Sarkisov link $\varphi\colon X_1\dashto X_2$ has Galois depth $\leq 8$ -- except if it is a link of type II between Mori conic bundles.
  Indeed, the statement for links of type I and III is implied by Lemma~\ref{lemma:TypeIIIDelPezzo}, for type II it is Lemma~\ref{lemma:TypeIIDelPezzo}, and links of type IV do not have base points so they have Galois depth $0$.
\end{remark}

\begin{remark}\label{remark:DecompositionOfBirMaps}
  With the above remark, any $f\in\BirMori(X)$ can be decomposed by Iskovskikh's Theorem~\ref{theorem:Iskovskikh} as
  \[f = \Psi_{N+1}\circ\Phi_N\circ\Psi_N\circ\cdots\circ\Psi_2\circ\Phi_1\circ\Psi_1,\]
  where $\Psi_i\colon Y_{i-1}\dashrightarrow X_i$ are birational maps with Galois depth $\leq 8$ (or an isomorphism of Mori fiber spaces), and $\Phi_i\colon X_i\dashrightarrow Y_i$ are a composition of links of type II between Mori conic bundles, possibly followed by an isomorphism of Mori fiber spaces.

  Note that if $f$ is not an isomorphism of Mori fiber spaces, then at least one Sarkisov link has to appear in the above decomposition.
\end{remark}

\section{Relations}\label{section:Relations}

Let $X$ be a Mori fiber space (of dimension $2$).
By Theorem~\ref{theorem:Iskovskikh}, the groupoid $\BirMori(X)$ is generated by Sarkisov links and isomorphisms of Mori fiber spaces.
We study the set of relations of these generators.
The following two relations will be called \textit{trivial}:
\begin{itemize}
  \item
  $\alpha\beta=\gamma$, where $\alpha,\beta,\gamma$ are isomorphisms of Mori fiber spaces,
  \item
  $\alpha\psi^{-1}\varphi=\id_X$, where $\varphi\colon X\dashto Y$ and $\psi\colon Z\dashto Y$ are Sarkisov links and $\alpha\colon Z\simto X$ is an isomorphism of Mori fiber spaces.
\end{itemize}

Whereas \cite{LZ19} and \cite{BLZ19} encode the information of Sarkisov links and relations between them in a simplicial square complex (where the vertices are rank $r$-fibrations) and find generating relations via the Sarkisov program \cite[Sections 2 and 3]{LZ19}, here we discuss a more classical and elementary approach.

\subsection{Relations between Mori conic bundles}\label{subsection:RelationsConicBundles}

\begin{lemma}\label{lemma:ContractedCurveInFiberMeansTypeII}
  Let $X\to V$ and $Y\to W$ be two Mori conic bundles and let $\varphi\colon X\dashto Y$ be a birational map such that every curve contracted by $\varphi$ is contained in a fiber of $X\to V$.
  Then there is a composition $\varphi=\alpha\circ\varphi_n\circ\cdots\circ \varphi_1$ of Sarkisov links $\varphi_i$ of type II between Mori conic bundles such that each $\varphi_i$ is the  blow-up of one orbit of $r_i\geq 1$ distinct points on $r_i$ distinct smooth fibers, followed by the contraction of the strict transforms of these fibers, where $\alpha$ is an isomorphism (not necessarily of Mori fiber spaces).
\end{lemma}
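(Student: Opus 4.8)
The plan is to take a minimal resolution of $\varphi$, namely a smooth surface $Z$ with birational morphisms $\pi\colon Z\to X$ and $\eta\colon Z\to Y$ such that $\varphi=\eta\circ\pi^{-1}$, and to argue by induction on the number $N$ of points blown up by $\pi$ (equivalently, the number of base points of $\varphi$ over $\overbar\kk$). If $N=0$ then $\varphi=\eta$ is a birational morphism; the curves it contracts are $(-1)$-curves lying in fibers of $X\to V$ by hypothesis, and a short computation (or the relative Picard rank) shows that $\varphi$ then preserves the fibration and contracts nothing, so $\varphi$ is an isomorphism and we take $\alpha=\varphi$, $n=0$. So assume $N\geq 1$; since a birational map without base points on either side is an isomorphism, $\varphi$ has a proper base point on $X$.

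The key step is to show that $\varphi$ preserves the fibration up to the final isomorphism $\alpha$. Let $f$ be a general fiber of $X\to V$; it avoids the finitely many base points, so its strict transform in $Z$ equals $\pi^* f$ and $C:=\varphi_* f=\eta_*\pi^* f$. For every $\eta$-exceptional curve, its image under $\pi$ is either a point or a curve contracted by $\varphi$, hence (by hypothesis) contained in a fiber of $X\to V$; in either case a general fiber $f$ misses it, so $\pi^* f\cdot\mathcal E=f\cdot\pi_*\mathcal E=0$ for every $\eta$-exceptional total transform $\mathcal E$. Therefore $\pi^* f$ is a pullback from $Y$, i.e.\ $\eta^* C=\pi^* f$, and in particular
\[
  C^2=(\eta^* C)^2=(\pi^* f)^2=f^2=0 .
\]
As $f$ moves in a pencil covering $X$ and $\varphi|_f$ is birational onto $C$, the curves $C$ move in a pencil of rational curves covering $Y$ with $C^2=0$; this is a conic bundle structure $Y\to V'$ whose base is identified with $V$ through $\varphi$. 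The possible discrepancy between this structure and the given one $Y\to W$ is an isomorphism of the surface $Y$ (not necessarily of Mori fiber spaces), which we absorb into $\alpha$; from now on we may assume that $\varphi$ preserves the fibration over $V\cong W$. This is exactly the point where the hypothesis is used.

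For the inductive step we peel off one Sarkisov link. We choose a Galois orbit of proper base points $p_1,\dots,p_r$ of $\varphi$ lying on $r$ distinct smooth fibers $f_1,\dots,f_r$, one point per fiber. Blowing up the orbit and contracting the strict transforms $\tilde f_i$ (which are now $(-1)$-curves, since $f_i^2=0$) is a Galois-equivariant elementary transformation $\varphi_1\colon X\dashto X_1$; it is a link of type II between Mori conic bundles over $V$, of exactly the stated form, and it preserves the relative Picard rank. The composite $\psi:=\varphi\circ\varphi_1^{-1}\colon X_1\dashto Y$ again contracts only curves contained in fibers (these are inherited from those of $\varphi$ together with the $\tilde f_i$) and has $r$ fewer base points over $\overbar\kk$. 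By induction $\psi=\alpha\circ\varphi_n\circ\cdots\circ\varphi_2$ with the $\varphi_i$ of the required form, and composing with $\varphi_1$ finishes the proof, all maps being defined over $\kk$ by Galois equivariance.

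The main obstacle is the choice made at the start of the inductive step: one must guarantee that at each stage there is a Galois orbit of \emph{proper} base points lying on \emph{distinct smooth} fibers, with a single point on each. This requires ruling out (or first removing) base points at the nodes of singular fibers, controlling infinitely near base points, and excluding two Galois-conjugate base points on one common $\kk$-fiber (which would force $\tilde f^2=-2$ and obstruct the contraction). Establishing that such a ``free'' orbit can always be extracted---so that the induction on $N$ genuinely decreases---is the delicate, technical part of the argument; the fibration-preservation computation above is what makes it possible, since it forces all base points and all contracted curves to be strictly vertical.
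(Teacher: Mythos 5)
Your overall architecture---minimal resolution, peeling off elementary (type II) links one Galois orbit at a time, a terminal isomorphism $\alpha$, induction on the number of base points---is the same as the paper's. But there is a genuine gap exactly where you flag one: you never establish that at each stage a Galois orbit of proper base points exists lying on pairwise distinct \emph{smooth} fibers, one point per fiber, whose removal strictly decreases the number of base points. This is not a routine technicality that your fibration-preservation computation settles: knowing that all contracted curves are vertical does not exclude a proper base point on a component (or at the node) of a singular fiber, nor two conjugate base points on a common $\kk$-fiber; and if you pick an arbitrary orbit of proper base points, the elementary transformation $\varphi_1$ need not exist (the strict transform of a fiber carrying two points of the orbit has self-intersection $\leq -2$ and cannot be contracted), and even when it exists there is no a priori guarantee that $\varphi\circ\varphi_1^{-1}$ has fewer base points---blowing up points not matched with curves contracted on the $Y$-side can create base points rather than remove them. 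Since this orbit extraction is the entire content of the lemma, the proposal as written is incomplete.

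The missing idea, which is how the paper proceeds, is to select the orbit from the \emph{target} side of the resolution: take a $(-1)$-curve $C\subset S$ contracted by $\tau$, where $\sigma\colon S\to X$ and $\tau\colon S\to Y$ form the minimal resolution. By minimality $\sigma(C)$ is a curve contracted by $\varphi$, hence contained in a fiber. If $\sigma(C)$ were a component $F$ of a singular fiber, then $C^2=-1$ forces $F$ to carry no base point of $\sigma$; since the two components of a singular fiber of a Mori conic bundle are Galois conjugate (Remark~\ref{remark:MFSToPointOrCurve}), $\tau$ would have to contract the strict transforms of both, which is impossible because after contracting one the other has self-intersection $0$. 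So $\sigma(C)$ is a smooth fiber $F$ with $F^2=0$, and $C^2=-1$ forces exactly one base point $p\in F$ and no two points of the orbit of $p$ on a common fiber (either would give $C^2\leq -2$). All three obstacles you list thus drop out of a single self-intersection count, and because the fibers through the orbit are genuinely contracted by $\tau$, the composition $\varphi\circ\varphi_1^{-1}$ provably has fewer base points, so the induction terminates. Incidentally, your preliminary ``key step'' (re-fibering $Y$ by the images of general fibers) is then unnecessary: the paper needs no fibration-preservation statement to run the induction, and such consequences are derived afterwards from the decomposition (Corollary~\ref{corollary:PreservingTheFibrationMeansTypeII}).
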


\begin{proof}
  Consider the minimal resolution \begin{center}
  \begin{tikzpicture}
    \matrix (m) [matrix of math nodes,row sep=1em,column sep=1em,minimum width=2em]
    {
    & S &  \\
    X & & Y ,  \\
    };
    \path[-stealth]
    (m-2-1) edge[dashed] node[below] {$\varphi$} (m-2-3)
    (m-1-2) edge node[above] {$\sigma$} (m-2-1)
    (m-1-2) edge node[above] {$\tau$} (m-2-3)
    ;
    \end{tikzpicture}%
  \end{center}
  where $\sigma$ and $\tau$ are compositions of blow-ups in points.
  Let $C\subset S$ be a $(-1)$-curve contracted by $\tau$.
  So $\sigma(C)$ is either a smooth fiber or a component of a singular fiber.

  Let us show that it is not possible that $\sigma(C)=F$ is a component of a singular fiber.
  In this case, the self-intersection of $F$ is $-1$.
  Hence, no point on $F$ is a base point of $\sigma$, since otherwise $C^2\leq -2$.
  Let $E$ be the other irreducible component of the fiber containing $F$, so $E^2=-1$.
  By Remark~\ref{remark:MFSToPointOrCurve}, $E$ and $F$ lie in one orbit.
  As $\tau$ contracts $C=\tilde E$, it also contracts $\tilde F$ (since they are in the same orbit).
  This is not possible, since after the contraction of $C$ onto a point, the push-forward of $\tilde F$ is a curve of self-intersection $0$ and can therefore not be contracted.

  Hence, $\sigma(C)=F$ is a smooth fiber and so its self-intersection is $0$.
  As $\sigma$ is a composition of blow-ups, there is one base point $p\in F$ of $\sigma$.
  So all points of the Galois orbit of $p$ are base points of $\sigma$, and no two lie on the same fiber, since otherwise the self-intersection of $C$ would be $\leq -2$.
  Therefore, $\tau$ contracts all fibers through the orbit.
  Let $\varphi_1$ be the blow-up of the orbit of $p$ followed by the contraction of the strict transforms of the fibers through the orbit.
  This is a link of type II between Mori conic bundles, and $\varphi$ factors through $\varphi_1$.
  Moreover, $\varphi\circ\varphi^{-1}$ has fewer base points.

  Repeating this process for all $(-1)$-curves that are contracted by $\tau$ gives an isomorphism $\alpha=\varphi\circ\varphi_1^{-1}\circ\cdots\circ\varphi_n^{-1}$, that is, $\varphi=\alpha\circ\varphi_n\circ\cdots\circ\varphi_1$ where all $\varphi_i$ are blow-ups of an orbit followed by the contraction of the strict transforms of the fibers through it, as in the statement of the lemma.
\end{proof}

\begin{corollary}\label{corollary:PreservingTheFibrationMeansTypeII}
  Let $X\to V$ and $Y\to W$ be two Mori fiber spaces and let $\varphi\colon X\dashto Y$ be a birational map that preserves the fibration.
  If $\varphi$ is not an isomorphism, then $\varphi=\varphi_n\circ\cdots\circ \varphi_1$ for Sarkisov links $\varphi_i$ of type II between Mori conic bundles such that each $\varphi_i$ is the  blow-up of one orbit of $r_i\geq 1$ distinct points on $r_i$ distinct smooth fibers, followed by the contraction of the strict transforms of these fibers.
\end{corollary}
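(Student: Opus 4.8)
The plan is to deduce this from Lemma~\ref{lemma:ContractedCurveInFiberMeansTypeII} by checking that its hypothesis holds and then showing that the residual isomorphism $\alpha$ it produces can be absorbed into the last link. First I would record that, since $\varphi$ preserves the fibration, there is an isomorphism $\beta\colon V\simto W$ with $\pi_Y\circ\varphi=\beta\circ\pi_X$ as rational maps; in particular $\dim V=\dim W$. As the conclusion is a statement about links between Mori conic bundles, it suffices to treat the case $\dim V=\dim W=1$, so I take $X\to V$ and $Y\to W$ to be Mori conic bundles.

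Next I would verify the hypothesis of Lemma~\ref{lemma:ContractedCurveInFiberMeansTypeII}: every curve contracted by $\varphi$ lies in a fiber of $X\to V$. Passing to the minimal resolution $\sigma\colon S\to X$, $\tau\colon S\to Y$, the identity $\pi_Y\circ\tau=\pi_Y\circ\varphi\circ\sigma=\beta\circ\pi_X\circ\sigma$ of morphisms on $S$ holds. If $C\subset S$ is contracted by $\tau$, then $\pi_Y(\tau(C))$ is a point, hence so is $\beta(\pi_X(\sigma(C)))$; since $\beta$ is an isomorphism, $\pi_X(\sigma(C))$ is a point, i.e.\ $\sigma(C)$ is contained in a fiber. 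Thus the lemma applies and yields $\varphi=\alpha\circ\varphi_n\circ\cdots\circ\varphi_1$, where each $\varphi_i$ is a Sarkisov link of type II between Mori conic bundles of the stated special form and $\alpha$ is an isomorphism, a priori not of Mori fiber spaces. Since $\varphi$ is not an isomorphism, we have $n\geq1$.

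The main step is to upgrade $\alpha$ to an isomorphism of Mori fiber spaces. Each link $\varphi_i$ of type II between Mori conic bundles preserves the fibration by definition, and so does $\varphi$ by hypothesis; as preserving the fibration is stable under composition and inversion, the map $\alpha=\varphi\circ\varphi_1^{-1}\circ\cdots\circ\varphi_n^{-1}$ preserves the fibration. Being moreover an isomorphism, $\alpha$ is an isomorphism of Mori fiber spaces. I would then absorb it into the last link: $\alpha\circ\varphi_n$ is again a Sarkisov link of type II between Mori conic bundles of exactly the same special form, since post-composing with the Mori fiber space isomorphism $\alpha$ transports the description ``blow-up of one orbit on distinct smooth fibers, followed by the contraction of the strict transforms of those fibers'' unchanged to the new target $Y$. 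Replacing $\varphi_n$ by $\alpha\circ\varphi_n$ gives $\varphi=\varphi_n\circ\cdots\circ\varphi_1$ as desired.

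The step I expect to require the most care is this last one: confirming that $\alpha$ genuinely preserves the fibration (which rests on the easy but essential fact that type II links and their inverses preserve the fibration), and that absorbing $\alpha$ into $\varphi_n$ does not destroy the precise normal form, namely ``blow-up of one orbit of $r_n$ distinct points on $r_n$ distinct smooth fibers, followed by the contraction of the strict transforms of those fibers.'' Everything else is a direct reduction to Lemma~\ref{lemma:ContractedCurveInFiberMeansTypeII}.
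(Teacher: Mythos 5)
Your proposal is correct and takes essentially the same route as the paper: verify the hypothesis of Lemma~\ref{lemma:ContractedCurveInFiberMeansTypeII} from the commuting diagram, note that the residual isomorphism $\alpha=\varphi\circ\varphi_1^{-1}\circ\cdots\circ\varphi_n^{-1}$ preserves the fibration (since $\varphi$ and each type II link do) and is therefore an isomorphism of Mori fiber spaces, and absorb it into the last link as $\alpha\circ\varphi_n$. Your additional care — the explicit reduction to one-dimensional bases and the verification on the minimal resolution — merely spells out what the paper's shorter proof leaves implicit.
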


\begin{proof}
  Since $\varphi$ preserves the fibration, any curve in $X$ that is contracted by $\varphi$ onto a point in $Y$ is contained in a fiber and so Lemma~\ref{lemma:ContractedCurveInFiberMeansTypeII} can be applied.
  Hence, $\varphi=\alpha\circ\varphi_n\circ\cdots\circ \varphi_1$ where the $\varphi_i$ are links of type II as desired and $\alpha$ is an isomorphism.
  As $\varphi$ and each of the $\varphi_i$ preserve the fibration, also $\alpha$ preserves the fibration and is therefore an isomorphism of Mori fiber spaces. Hence $\alpha\circ\varphi_n$ is also a link of type II between Mori conic bundles and the corollary follows.
\end{proof}

Note that Corollary~\ref{corollary:PreservingTheFibrationMeansTypeII} implies that a birational map preserving the fibration sends singular fibers onto singular fibers.

\begin{lemma}\label{lemma:IskoForNotGeomRational}
  Let $X$ be a Mori fiber space that is not geometrically rational. Then, $\BirMori(X)$ is generated  by links of type II and isomorphisms of Mori fiber spaces.
\end{lemma}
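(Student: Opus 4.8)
The plan is to start from Iskovskikh's Theorem~\ref{theorem:Iskovskikh}, which gives that $\BirMori(X)$ is generated by Sarkisov links of types I--IV together with isomorphisms of Mori fiber spaces, and then to show that under the hypothesis neither a link of type I, nor of type III, nor of type IV can occur among these generators. The one fact I would use repeatedly is that geometric rationality is a birational invariant: a birational map defined over $\kk$ is in particular defined over $\overbar\kk$, so any Mori fiber space that appears as the source or target of a generator of $\BirMori(X)$ is birational to $X$ over $\overbar\kk$ and hence, like $X$, not geometrically rational.

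First I would discard the links of type I and III. By Remark~\ref{remark:MFSToPointOrCurve} a Mori fiber space whose base is a point is a del Pezzo surface, and over $\overbar\kk$ every del Pezzo surface is rational; such a Mori fiber space is therefore geometrically rational. Since no Mori fiber space birational to $X$ is geometrically rational, none of them can have a point as base, so all of them are Mori conic bundles over a curve. In a link of type I (respectively type III) one of the two bases is a point (see Definition~\ref{def:SarkisovLink}), so no such link appears in the generating set.

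It remains to rule out the links of type IV, and this is the step I expect to be the crux, since it is the only place where the positive genus of the base enters. By Lemma~\ref{lemma:GeometricallyRationalIffGenus0} every conic bundle birational to $X$ has base curve of genus $\geq 1$. I would then prove that on a surface $S$ a fibration onto a curve of genus $\geq 1$ with rational general fiber is unique: if $f\colon S\to B$ and $g\colon S\to B'$ are two such fibrations, a general fiber $F$ of $f$ is a rational curve, so its image under $g$ is the image of a rational curve in a curve of positive genus and is thus a point; hence $g$ is constant on the fibers of $f$ and factors as $g=h\circ f$ for some morphism $h\colon B\to B'$, which is an isomorphism because $f$ and $g$ are surjective with connected fibers. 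Applying this to a hypothetical type IV link $\varphi\colon X_1\to X_2$, which is an isomorphism of the underlying surfaces, the two fibrations $\pi_2\colon X_2\to B_2$ and $\pi_1\circ\varphi^{-1}\colon X_2\to B_1$ coincide up to an isomorphism $B_1\simto B_2$, so $\varphi$ preserves the fibration. This contradicts the defining property of a type IV link (Definition~\ref{def:SarkisovLink}). Hence only links of type II and isomorphisms of Mori fiber spaces survive, which is the assertion.
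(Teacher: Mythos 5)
Your proposal has a genuine gap at its very first step. You invoke Theorem~\ref{theorem:Iskovskikh} to obtain generation of $\BirMori(X)$ by Sarkisov links of types I--IV, but that theorem, as stated in the paper (and as proved by Iskovskikh and Corti), carries the hypothesis that $X$ is \emph{geometrically rational} --- precisely the hypothesis negated in this lemma. There is therefore no pre-existing generation statement available to you: producing one in the non-geometrically-rational case is the actual content of the lemma, which is why the paper needs it at all (in the proof of Theorem~\ref{theorem:GeneratingRelations}, the geometrically rational case is handled via Theorem~\ref{theorem:Iskovskikh}, and this lemma, via Lemma~\ref{lemma:NotGeometricallyRational}, supplies the complementary case). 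Your subsequent eliminations of types I, III and IV, while correct in themselves, only prune a list of generators you were not entitled to assume; the argument as structured is circular relative to the paper's toolkit.

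The repair is to prove the factorization directly, which is what the paper does. Given $\varphi\colon X_1\dashto X_2$ in $\BirMori(X)$, first note the bases $B_i$ must be curves --- a point base would make $X_i$ a del Pezzo surface, hence geometrically rational, which is exactly your own type I/III observation --- and these curves have genus $\geq 1$ by Lemma~\ref{lemma:GeometricallyRationalIffGenus0}. Then every curve contracted by $\varphi$ is rational, so it maps constantly to $B_1$ and lies in a fiber, and Lemma~\ref{lemma:ContractedCurveInFiberMeansTypeII} yields $\varphi=\alpha\circ\varphi_n\circ\cdots\circ\varphi_1$ with the $\varphi_i$ links of type II between Mori conic bundles and $\alpha$ an isomorphism. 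If $\alpha$ preserves the fibration, Corollary~\ref{corollary:PreservingTheFibrationMeansTypeII} finishes; if not, $\alpha$ would be a link of type IV, and this is excluded by exactly the uniqueness-of-fibration argument in your last paragraph, which coincides with the paper's (a general fiber is $\PPP^1$ and admits no non-constant morphism to a curve of genus $\geq 1$). So your type IV analysis is sound and matches the paper; what is missing is the direct decomposition into type II links replacing the inapplicable appeal to Theorem~\ref{theorem:Iskovskikh}.
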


\begin{proof}
  Let $\varphi\colon X_1\dashto X_2$ be a birational map in $\BirMori(X)$, where $X_i\to B_i$ are Mori fiber spaces where $B_i$ are curves of genus at least $1$.
  Any curve $C$ in $X_1$ contracted by $\varphi$ is rational, and so every morphism $C\to B_1$ is constant (by Riemann-Hurwitz).
  Therefore, $C$ is contained in a fiber.
  Lemma~\ref{lemma:ContractedCurveInFiberMeansTypeII} can be applied and therefore $\varphi=\alpha\varphi_n\cdots\varphi_1$, where $\alpha$ is an isomorphism and $\varphi_i$ are links of type II between Mori conic bundles.
  If $\alpha$ preserves the fibration, then we are in the case of Corollary~\ref{corollary:PreservingTheFibrationMeansTypeII} and so $\varphi$ is a product of links of type II.
  If $\alpha$ does not preserve the fibration, $\alpha\colon Y_1\simto Y_2$ is a link of type IV, where $Y_i\to C_i$ are Mori fiber spaces. Such a link does not exist for geometrically non-rational surfaces:
  Consider $F_2\subset Y_2$ a general fiber (hence isomorphic to $\PPP^1$) of $Y_2\to C_2$ and consider its image $F_1=\alpha^{-1}(F_2)$ in $Y_1$, which is also isomorphic to $\PPP^1$.
  As $F_2$ was chosen to be a general fiber and $\alpha$ does not preserve the fibration, the restriction of the fibration $Y_1\to C_1$ to $F_1$ is surjective. This gives a contradiction, since every map from $\PPP^1\to C_1$ is constant as before.
\end{proof}

\begin{remark}\label{remark:RelationOfFourLinks}
    Let $\chi_1\colon X_0\dashto X_1$ and $\chi_2\colon X_1\dashto X_2$ be two links of type II between Mori conic bundles $X_i\to B_i$ such that no fiber of $X_1\to B_1$ is contracted by both $\chi_1^{-1}$ and $\chi_2$.
    Then, the composition $\chi_2\chi_1$ can be written as $\chi_3^{-1}\chi_4^{-1}$, where $\chi_3\colon X_2\dashto X_3$ is the blow-up of $\chi_2(\Bas(\chi_1^{-1}))$ followed by the contraction of the strict transform of the corresponding fiber, and $\chi_4\colon X_3\dashto X_4$ is the blow-up of $\chi_3(\Bas(\chi_2^{-1}))$ followed by the contraction of the strict transform of the corresponding fiber.
    So $\chi_4\chi_3\chi_2\chi_1$ is an isomorphism of Mori fiber spaces and $\chi_4$ can be chosen such that $\chi_4\chi_3\chi_2\chi_1=\id$.

    Note that $\chi_1$ and $\chi_3$ have the same Galois depth, as well as $\chi_2$ and $\chi_4$.
\end{remark}

\begin{lemma}\label{lemma:RelationsGeneratedByFour}
  Let $X$ be a Mori conic bundle and let $\varphi_n\cdots\varphi_1=\id_X$ be a relation in $\BirMori(X)$ such that $\varphi_i\colon X_{i-1}\dashto X_i$ is a link of type II between Mori conic bundles, where $X_0=X_n=X$.
  This relation is generated in $\BirMori(X)$ by the trivial relations and those of the form $\chi_4\chi_3\chi_2\chi_1=\id$ as in Remark~\ref{remark:RelationOfFourLinks}, where $\chi_1,\ldots,\chi_4$ are links of type II between Mori conic bundles.
\end{lemma}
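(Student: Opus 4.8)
The plan is to induct on the number $n$ of links, showing that the relator can be shortened using only the two permitted families: the four-link relations of Remark~\ref{remark:RelationOfFourLinks} as \emph{commutations}, and the trivial relations as \emph{cancellations}. First I record the basic dichotomy. By Lemma~\ref{lemma:ContractedCurveInFiberMeansTypeII} each $\varphi_i$ blows up a single Galois orbit $O_i$ of points on distinct smooth fibers and then contracts the strict transforms of those fibers; identifying all the bases with one curve $B$ through the isomorphisms attached to the links, $\varphi_i$ is supported over the orbit $O_i\subset B$, i.e.\ it is an isomorphism over every fiber not lying above $O_i$. Since a Mori conic bundle has exactly one fiber over each point of $B$, two consecutive links contract a common fiber of the intermediate surface exactly when they share their orbit; when the orbits differ they are disjoint, so Remark~\ref{remark:RelationOfFourLinks} applies and lets me replace $\varphi_{i+1}\varphi_i$ by $\chi_3^{-1}\chi_4^{-1}$, commuting the two links past each other at the cost of a single four-link relation and without altering the orbit over which either is supported.

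For the induction, fix any orbit $O$ occurring among the $O_i$ and list the links supported over $O$, in order of appearance, as $\varphi_{i_1},\dots,\varphi_{i_k}$. Every link with orbit $\ne O$ is an isomorphism over $O$, so restricting to the conic-bundle germ over $O$ yields a well-defined sequence of elementary models that begins and ends at the germ of $X$ over $O$ (because $X_0=X_n=X$); thus the $O$-links form a \emph{closed walk} on the graph whose edges are type II links over $O$, taken up to equivalence and reversal. The crux, isolated as a local lemma below, is that this graph is a \emph{tree}: any nonempty closed walk of type II links over one orbit must \emph{backtrack}, i.e.\ contain two consecutive $O$-links $\varphi_{i_s},\varphi_{i_{s+1}}$ with $\varphi_{i_{s+1}}=\varphi_{i_s}^{-1}$ up to an isomorphism of Mori fiber spaces. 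Granting this, all links lying between $\varphi_{i_s}$ and $\varphi_{i_{s+1}}$ in the full word are supported over orbits $\ne O$, hence disjoint from $O$; by the commutation step I slide them away to make $\varphi_{i_s},\varphi_{i_{s+1}}$ adjacent. The commutations preserve the edge of each link over $O$, so the two links stay mutually inverse and cancel by a trivial relation; absorbing the resulting isomorphism of Mori fiber spaces into a neighbouring link (again trivial) produces a relation of the same shape with two fewer links, and the induction closes.

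The main obstacle is the local tree lemma. I would prove it with a height function: for the partial composition $\varphi_{i_s}\cdots\varphi_{i_1}$ restricted over $O$, let $N_s$ be the length of its base scheme counted with infinitely near points over $\overbar\kk$ (a multiple of $|O|$). A direct computation with a single elementary transformation of a ruled germ -- blow up one point of a fiber, contract the strict transform -- shows that appending one $O$-link changes $N_s$ by exactly $\pm|O|$, the decrease occurring precisely when the new link reverses the previous transformation, that is, blows up the point that the inverse of the previous link would blow up. Hence $N_s$ behaves like a distance on a tree: along the closed walk it starts and ends at $0$ and attains a positive maximum, and at that peak the only base-length-decreasing move available is the reverse of the incoming edge, which forces a backtrack. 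The technical heart is carrying out this computation Galois-equivariantly over the whole orbit and checking that ``reversing the previous transformation'' coincides with ``being the inverse link up to an isomorphism of Mori fiber spaces''; the rest is bookkeeping with the two permitted families of relations.
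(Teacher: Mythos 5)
Your proposal is correct and takes essentially the same route as the paper's own proof: your height function $N_s$ is precisely the paper's per-fiber counter $\NNNN(F,i)$ summed Galois-equivariantly over the fibers above $O$ (so $N_s=\abs{O}\cdot\NNNN(F,i_s)$, with steps of exactly $\pm\abs{O}$ for the same reason the paper's steps are $\pm1$), and your peak-forces-backtrack step combined with sliding the intervening links via Remark~\ref{remark:RelationOfFourLinks} reproduces exactly the paper's two reductions, namely cancelling subsequences $(m-1,m,m-1)$ by the trivial relations and commuting subsequences $(m-1,m,m)$ by the four-link relations. The tree/closed-walk framing and the induction on the word length rather than on $\max_i \NNNN(F,i)$ are only cosmetic repackagings of the same argument.
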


\begin{proof}
  For each link $\varphi_i$ of type II we call $p_{i-1}\subset X_{i-1}$ the base orbit of $\varphi_i$, and $q_i\subset X_i$ the base orbit of $\varphi_i^{-1}$.

  In the following, we show that using relations of the form $\chi_4\chi_3\chi_2\chi_1=\id$ and the trivial relations, we can reduce the word $\varphi=\varphi_n\cdots\varphi_1$ to the empty word.

  Starting from a fiber $F=F_0\subset X_0$ and the value $\NNNN(F,0)=0$, we define a sequence of subsets $F_i=(\varphi_i\cdots\varphi_1)(F)\subset X_i$, and a sequence of values $\NNNN(F,i)\in\NNN$ for $i=1,\ldots,n$ that ``keep track of what happens to $F$'' in each step of our fixed decomposition $\varphi_n\circ\cdots\circ\varphi_1$.

  We inductively define $\NNNN(F,i)\geq0$ in the following way:
  \begin{enumerate}
    \item
    If $\varphi_i$ is a local isomorphism on the fiber containing $F_{i-1}$, then
    \[
    \NNNN(F,i)=\NNNN(F,i-1).
    \]
    \item
    Otherwise, $F_{i-1}$ lies on the same fiber as a point of $p_{i-1}$. We define
    \[
    \NNNN(F,i)=
    \begin{cases}
      1 & \text{if $F_{i-1}$ is a fiber (so $F_i$ is a point in $q_i$),}\\
      \NNNN(F,i-1)-1 & \text{if $F_{i-1}$ is a point in $p_{i-1}$,}\\
      \NNNN(F,i-1)+1 & \text{if $F_{i-1}$ is a point but not in $p_{i-1}$}\\
      & (\text{again, $F_i$ is a point in $q_i$)}.
    \end{cases}
      \]
  \end{enumerate}
  Observe that $\NNNN(F,i)$ is the number of base points of $\varphi_i\circ\cdots\circ\varphi_1$ that are equal or infinitely near to a base point on $F$.
  Note that the sequence
  \[
  \Sigma_F = \left(\NNNN(F,0),\NNNN(F,1),\ldots,\NNNN(F,n)\right)
  \]
  is the same for each fiber in the same orbit as $F$.
  We consider connected subsequences of $\Sigma_F$ and note that the last value $\NNNN(F,n)$ is zero as the product of all the $\varphi_i$ is the identity.
  \begin{enumerate}
    \item \label{item:RelationsGeneratedByFour--updown}
    First, we look at subsequences of the form $(m-1,m,m-1)$ for $m\geq1$ with corresponding links $\varphi_i$ and $\varphi_{i+1}$.
    This occurs only if $\varphi_i^{-1}$ and $\varphi_{i+1}$ have common base points, so $\varphi_{i+1}\varphi_i$ equals an isomorphism.
    Hence, this part of the sequence is equivalent to the empty word modulo the trivial relations.
    \item \label{item:RelationsGeneratedByFour--upconstant}
    Now, we consider subsequences of the form $(m-1,m,m)$ with corresponding links $\varphi_i$, which is not an isomorphism on $F$, and $\varphi_{i+1}$, which is a local isomorphism on $F$.
    In this case, the fibers of $X_i\to B_i$ that are contracted by $\varphi_i^{-1}$ are not contracted by $\varphi_{i+1}$.
    By Remark~\ref{remark:RelationOfFourLinks}, there exist links $\chi_i$ and $\chi_{i+1}$ of type II between Mori conic bundles such that $\chi_i$ has $\varphi_{i+1}(\Bas(\varphi_i^{-1}))$ as base points,
    and $\chi_{i+1}$ has base points $\chi_i(\Bas(\varphi_{i+1}))$, and $\varphi_{i+1}\varphi_i=\chi_i^{-1}\chi_{i+1}^{-1}$ is satisfied.
    So by replacing $\varphi_{i+1}\varphi_i$ with $\chi_i^{-1}\chi_{i+1}^{-1}$ in the factorisation of $\varphi$, we can change this part of the sequence to $(m-1,m-1,m)$, leaving the rest of it invariant.
  \end{enumerate}
  Using these two kinds of reduction modulo the said relations, we proceed by induction over
  \[
  m=m_F=\max\{\NNNN(F,i)\mid i=0,\ldots,n\}.
  \]
  There exists at least one subsequence $\Sigma'= (m-1,m,\ldots,m,m-1)$ of size $k+2$ for some $k\geq 1$.
  Using~\ref{item:RelationsGeneratedByFour--upconstant} multiple times we can change this part of the sequence to $(m-1,\ldots,m-1,m,m-1)$.
  This can then be reduced to $(m-1,\ldots,m-1)$ of size $k+1$ with~\ref{item:RelationsGeneratedByFour--updown}.
  Doing this for any such $\Sigma'$, we get a new sequence (corresponding to the new factorisation of $\varphi$) whose maximum is $m-1$.
  By induction, we find the sequence $(0,\ldots,0)$.
  Hence, $\varphi$ is a local isomorphism on $F$.

  Note that $m_F\geq 1$ only for finitely many fibers $F$.
  We repeat the described process for each fiber $F$ with $m_F\geq 1$ and can therefore reduce $\varphi$ to an isomorphism using the trivial relations and compositions of four links of type II between Mori conic bundles.
\end{proof}

\begin{corollary}\label{corollary:MoveSmallGalois depthToEnd}
  Let $\Delta\geq 1$.
  Let $X_1\to B_1$ and $X_2\to B_2$ be two Mori conic bundles. Let $\varphi=\varphi_n\cdots\varphi_1\colon X_1\dashto X_2$ be a composition of links of type II.
  Then we can write $\varphi$ modulo the trivial relations and those of the form $\chi_4\chi_3\chi_2\chi_1$, where $\chi_i$ are links of type II between Mori conic bundles, as $\psi_n\cdots\psi_1$, where $\psi_1,\ldots,\psi_r$ are of Galois depth $\geq \Delta$ and $\psi_{r+1},\ldots,\psi_n$ are of Galois depth $<\Delta$.
\end{corollary}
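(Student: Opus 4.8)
The plan is to sort the word $\varphi_n\cdots\varphi_1$ by a bubble-sort procedure, repeatedly using the four-link relation of Remark~\ref{remark:RelationOfFourLinks} to interchange an adjacent pair in which a link of Galois depth $<\Delta$ is applied before a link of Galois depth $\geq\Delta$. To measure progress I would count \emph{inversions}: pairs of indices $j<k$ such that $\varphi_j$ has Galois depth $<\Delta$ while $\varphi_k$ has Galois depth $\geq\Delta$, that is, a small-depth link applied before a large-depth one. The target form $\psi_n\cdots\psi_1$ is precisely a word with no inversions, so it suffices to show that a single adjacent interchange can always be performed, modulo the allowed relations, in such a way that the number of inversions strictly decreases.

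So fix an index $i$ with $\varphi_i$ of Galois depth $r_i<\Delta$ and $\varphi_{i+1}$ of Galois depth $r_{i+1}\geq\Delta$, both links of type II between Mori conic bundles with the middle bundle $\pi\colon X_i\to B_i$. In order to apply Remark~\ref{remark:RelationOfFourLinks} to $\varphi_{i+1}\varphi_i$ I must check that no fiber of $\pi$ is contracted by both $\varphi_i^{-1}$ and $\varphi_{i+1}$; this verification is the heart of the argument. Here $\Bas(\varphi_i^{-1})$ consists of $r_i$ points lying on $r_i$ distinct fibers, and $\Bas(\varphi_{i+1})$ of $r_{i+1}$ points on $r_{i+1}$ distinct fibers. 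Since $\pi$ is defined over $\kk$ and hence $\Gamma$-equivariant, the images $\pi(\Bas(\varphi_i^{-1}))$ and $\pi(\Bas(\varphi_{i+1}))$ are single Galois orbits in $B_i$, of sizes $r_i$ and $r_{i+1}$ respectively. Two Galois orbits are either equal or disjoint; as $r_i<\Delta\leq r_{i+1}$ forces $r_i\neq r_{i+1}$, these two orbits must be disjoint. A fiber of $\pi$ is determined by its image point in $B_i$, so no fiber meets both $\Bas(\varphi_i^{-1})$ and $\Bas(\varphi_{i+1})$; in particular no fiber is contracted by both, which is exactly the hypothesis of Remark~\ref{remark:RelationOfFourLinks}.

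Applying that remark, I may replace $\varphi_{i+1}\varphi_i$ by $\chi_3^{-1}\chi_4^{-1}$, where $\chi_3,\chi_4$ are again links of type II between Mori conic bundles and, by the depth statement in Remark~\ref{remark:RelationOfFourLinks}, $\chi_4^{-1}$ has Galois depth $r_{i+1}\geq\Delta$ while $\chi_3^{-1}$ has Galois depth $r_i<\Delta$. This substitution is an instance of a relation of the form $\chi_4\chi_3\chi_2\chi_1=\id$, hence is permitted. In the new word the links applied at the two steps in question now carry depths $(\geq\Delta,<\Delta)$ instead of $(<\Delta,\geq\Delta)$, the multiset of all depths is unchanged, and every other link is untouched. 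A routine check then shows that the inversion at this adjacent pair is removed and no new inversion is created, so the total number of inversions drops by exactly one.

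Since the number of inversions is a nonnegative integer, iterating this interchange terminates after finitely many steps in a word $\psi_n\cdots\psi_1$ with no inversions; such a word has an index $r$ with $\psi_1,\dots,\psi_r$ of Galois depth $\geq\Delta$ and $\psi_{r+1},\dots,\psi_n$ of Galois depth $<\Delta$, as required. The only genuinely geometric input is the disjointness check in the second paragraph, and I expect it to be the main obstacle: the rest is the formal combinatorics of sorting a word by adjacent transpositions. The essential reason the check succeeds is that links of different Galois depth blow up Galois orbits whose images in the base of the conic bundle have different cardinalities and therefore cannot coincide.
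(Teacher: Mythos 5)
Your proposal is correct and takes essentially the same route as the paper's proof: both sort the word by adjacent swaps using the four-link relation of Remark~\ref{remark:RelationOfFourLinks}, the crucial point being that the two links involved in a swap are centered at disjoint orbits of fibers because their Galois depths differ --- your explicit check via the image Galois orbits in $B_i$ (orbits of different sizes $r_i<\Delta\leq r_{i+1}$ are distinct, hence disjoint) is just the contrapositive of the paper's observation that links centered on the same orbit of fibers have equal Galois depth. Your inversion-counting termination argument is a harmless formalization of the paper's ``in this manner we can move all small orbits to the end of the composition.''
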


\begin{proof}
  We use relations of Lemma~\ref{lemma:RelationsGeneratedByFour}.
  If $\varphi_i$ and $\varphi_{i+1}$ are centered at the same fiber (that is, $\Bas(\varphi_i^{-1})$ lies on the same orbit of fibers as $\Bas(\varphi_{i+1})$), then they have the same Galois depth.
  If $\varphi_i$ and $\varphi_{i+1}$ are centered at different fibers and the Galois depth of $\varphi_i$ is $<\Delta$ and the one of $\varphi_{i+1}$ is $\geq\Delta$, let $\chi_1=\varphi_i$ and $\chi_2=\varphi_{i+1}$.
  As in Remark~\ref{remark:RelationOfFourLinks} there are links $\chi_3$ (of the same Galois depth as $\chi_1$, hence $<\Delta$) and $\chi_4$ (of the same Galois depth as $\chi_2$, hence $\geq\Delta$) such that $\chi_4\chi_3\chi_2\chi_1=\id$.
  Therefore, by replacing $\chi_2\chi_1$ with $\chi_3^{-1}\chi_4^{-1}$ we have the desired order of orbit sizes on these two elements.
  In this manner we can move all small orbits to the end of the composition.
\end{proof}

\begin{lemma}\label{lemma:NotGeometricallyRational}
  Let $X\to B$ be a Mori conic bundle that is not geometrically rational.
  Relations of the groupoid $\BirMori(X)$ are generated by relations of the form $\chi_4\chi_3\chi_2\chi_1=\id$, where  $\chi_i\colon X_{i-1}\dashto X_i$ are links of type II between Mori conic bundles.
\end{lemma}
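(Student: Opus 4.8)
The plan is to reduce immediately to the situation already handled by Lemma~\ref{lemma:RelationsGeneratedByFour}. Since $X$ is not geometrically rational, Lemma~\ref{lemma:IskoForNotGeomRational} tells us that the only generators of $\BirMori(X)$ that can occur are links of type II between Mori conic bundles and isomorphisms of Mori fiber spaces; in particular no links of type I or III (which would involve a del Pezzo surface, necessarily geometrically rational) nor of type IV (ruled out in the proof of Lemma~\ref{lemma:IskoForNotGeomRational}) appear. So an arbitrary relation is a word $g_m\cdots g_1=\id_X$ in which each $g_j$ is either a link of type II between Mori conic bundles or an isomorphism of Mori fiber spaces.

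The key step is to eliminate the isomorphisms. Using the trivial relation that a composition of isomorphisms of Mori fiber spaces is again such an isomorphism, I would first merge every maximal block of consecutive isomorphisms into a single one. Then, using that the composition of a link of type II between Mori conic bundles with an isomorphism of Mori fiber spaces (on either side) is again a link of type II---exactly as in the proof of Corollary~\ref{corollary:PreservingTheFibrationMeansTypeII}, where $\alpha\circ\varphi_n$ is recognised as a link of type II---I would absorb each remaining isomorphism into a neighbouring link. Provided at least one link occurs, this rewriting (modulo the trivial relations) turns the word into a product $\varphi_n\cdots\varphi_1=\id_X$ of links of type II between Mori conic bundles with $X_0=X_n=X$; if no link occurs at all, the relation is already a trivial relation between isomorphisms.

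Finally, I would apply Lemma~\ref{lemma:RelationsGeneratedByFour} to the relation $\varphi_n\cdots\varphi_1=\id_X$, which states precisely that such a relation is generated by the trivial relations and those of the form $\chi_4\chi_3\chi_2\chi_1=\id$ with the $\chi_i$ links of type II between Mori conic bundles. Combining this with the reduction above yields the claim.

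The main obstacle I anticipate is the bookkeeping in the absorption step: one must check that absorbing isomorphisms never produces a generator outside the allowed class and that the endpoints $X_0$ and $X_n$ remain equal to $X$, so that the rewritten word is a genuine relation based at $X$ to which Lemma~\ref{lemma:RelationsGeneratedByFour} applies. All the geometric input---that only type II links arise and that isomorphisms of Mori fiber spaces absorb into type II links---is already available, so the remaining work is essentially a formal manipulation within the groupoid modulo the trivial relations.
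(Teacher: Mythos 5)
Your proof is correct and takes essentially the same route as the paper: both rule out links of type I, III and IV (the paper via the observation that type I/III links involve del Pezzo surfaces, hence geometrically rational surfaces, together with the type~IV exclusion from Lemma~\ref{lemma:IskoForNotGeomRational}, which is exactly the content you invoke through that lemma) and then apply Lemma~\ref{lemma:RelationsGeneratedByFour}. Your explicit step of absorbing isomorphisms of Mori fiber spaces into adjacent type~II links modulo the trivial relations, as in Corollary~\ref{corollary:PreservingTheFibrationMeansTypeII}, is a detail the paper's proof passes over silently, but it is the same argument, just spelled out more carefully.
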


\begin{proof}
  By Lemma~\ref{lemma:GeometricallyRationalIffGenus0}, the genus of $B$ is $\geq 1$.
  Links of type I or III do not occur as they are between del Pezzo surfaces (see Lemma~\ref{lemma:TypeIIIDelPezzo}), which are geometrically rational. Also no links of type IV are possible by Lemma~\ref{lemma:IskoForNotGeomRational}.
  Therefore, only links of type II between Mori conic bundles occur.
  By Lemma~\ref{lemma:RelationsGeneratedByFour}, they are generated by links as in the statement.
\end{proof}

The following section is dedicated to the more interesting case of geometrically rational Mori fiber spaces.
We want to prove that relations are generated by this type of relations together with relations of the form $\varphi_n\circ\cdots\circ\varphi_1=\id$, where the Galois depth of all $\varphi_i$ is $\leq 15$.


\subsection{Birational maps on geometrically rational Mori conic bundles}

\begin{lemma}\label{lemma:DivisorsOnGeometricallyRationalCB}
  Let $X\to B$ be a geometrically rational Mori conic bundle with general fiber $f$.
  Then, each effective divisor $D$ on $X$ is linearly equivalent to $-aK_X+bf$ for $a,b\in\frac{1}{2}\ZZZ$ and $a\geq0$.
  Moreover, if the support of $D$ is not contained in fibers, then $a\geq \frac{1}{2}$.
\end{lemma}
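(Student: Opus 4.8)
The plan is to exploit that a geometrically rational Mori conic bundle has Picard rank two over $\kk$ and that $\{-K_X,f\}$ is a rational basis of the Picard group, so that $a$ and $b$ can be recovered from intersection numbers; the only genuine difficulty will be the half-integrality of $b$.

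First I would record the numerics of $f$. A general fiber is a smooth rational curve with $f^2=0$, so adjunction gives $f\cdot(f+K_X)=-2$ and hence $-K_X\cdot f=2$. By Lemma~\ref{lemma:GeometricallyRationalIffGenus0} the base $B$ has genus $0$, so $\rk\Pic_\kk(B)=1$, and together with $\rk\Pic_\kk(X/B)=1$ this gives $\rk\Pic_\kk(X)=2$. The Gram matrix of $(-K_X,f)$ has determinant $K_X^2\cdot 0-2^2=-4\neq0$, so these two classes are linearly independent and therefore form a basis of $\Pic_\kk(X)\otimes\QQQ$. Thus $D\sim -aK_X+bf$ for unique $a,b\in\QQQ$, and it remains to pin these coefficients down.

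Solving for $a$ is immediate: intersecting the relation with $f$ and using $f^2=0$ yields $D\cdot f=2a$, so $a=\tfrac12\,D\cdot f$. As intersection numbers are integers, $a\in\tfrac12\ZZZ$; as $f$ is nef and $D$ is effective, $D\cdot f\geq0$, so $a\geq0$. If in addition the support of $D$ is not contained in fibers, then some irreducible component of $D$ dominates $B$ and hence meets a general fiber, so $D\cdot f\geq1$ and therefore $a\geq\tfrac12$. This settles every claim except $b\in\tfrac12\ZZZ$.

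The crux is this last point, and I expect it to be the main obstacle: intersecting the relation with $-K_X$ only gives $2b=-D\cdot K_X-aK_X^2\in\tfrac12\ZZZ$, that is, the weaker statement $b\in\tfrac14\ZZZ$. To remove the spurious factor of two I would argue by primitivity. Since $2a\in\ZZZ$, the class $2aK_X$ is integral, whence $2bf\sim 2D+2aK_X$ lies in $\Pic_\kk(X)$. Let $F_0=t_0f$ with $t_0>0$ be the primitive generator of the rank-one lattice $\QQQ f\cap\Pic_\kk(X)$; then $2b\in t_0\ZZZ$. Riemann--Roch forces the integer $F_0\cdot(F_0-K_X)=2t_0$ to be even, so $t_0\in\ZZZ$, and therefore $2b\in t_0\ZZZ\subseteq\ZZZ$, i.e. $b\in\tfrac12\ZZZ$. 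This primitivity argument has the further virtue of covering uniformly the case where $B$ is a pointless conic: there the geometric fiber class $f$ is itself only half-integral, with $2f$ the integral primitive generator (so $t_0=2$), and the same computation still delivers $b\in\tfrac12\ZZZ$.
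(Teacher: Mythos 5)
Your proposal is correct, and it follows the paper's skeleton for everything except the one point you yourself identify as the crux. Like the paper, you get $\rk\Pic_\kk(X)=2$ from genus $0$ of $B$, deduce $D\sim -aK_X+bf$ with $a,b\in\QQQ$, and read off $a=\tfrac12 D\cdot f\in\tfrac12\ZZZ$, $a\geq 0$, with $a\geq\tfrac12$ when some component of $D$ is horizontal (the paper phrases this as $a>0$ together with $a\in\tfrac12\ZZZ$, which is the same thing). Where you diverge is the half-integrality of $b$: the paper invokes Tsen's theorem a second time to produce a section $s$ of $X_{\overbar\kk}\to B_{\overbar\kk}$ and then simply reads off $b=D\cdot s - a(-K_X\cdot s)\in\tfrac12\ZZZ$ from integrality of intersection numbers with the $\overbar\kk$-curve $s$ (using $f\cdot s=1$); you instead note that $2bf=2D+2aK_X$ is an integral class, identify the primitive generator $t_0f$ of $\QQQ f\cap\Pic_\kk(X)$, and use the Riemann--Roch parity $t_0f\cdot(t_0f-K_X)=2t_0\in 2\ZZZ$ to force $t_0\in\ZZZ$ and hence $2b\in t_0\ZZZ\subseteq\ZZZ$. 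Both arguments are sound. The paper's is shorter but leans on \cite[Corollary 6.6.2]{kollar99}; yours avoids producing a geometric section altogether, correctly diagnoses why the na\"ive intersection with $-K_X$ only yields $b\in\tfrac14\ZZZ$, and makes explicit the pointless-base case where the geometric fiber class fails to be $\kk$-integral, which the paper's section argument handles only silently. One small remark: your parenthetical $t_0=2$ over a pointless conic is plausible but unproven as stated; happily your argument never uses it, since $t_0\in\ZZZ$ from Riemann--Roch suffices in all cases.
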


\begin{proof}
  By the adjunction formula, we have $-K_X\cdot f=2$. Hence, $-K_X$ and $f$ are linearly independent as $f^2=0$.
  By the geometrical rationality of $X$, we have that $B_{\overbar\kk}=\PPP^1_{\overbar\kk}$, hence the Picard rank of $X$ is $2$. Hence, there are $a,b\in\QQQ$ with $D\sim -aK_X+bf$.
  As $D$ is effective, we have $0\leq D\cdot f=2a\in\NNN$.
  Moreover, if the support of $D$ is not contained in fibers, $a>0$.
  So $a\geq 0$ and $a\in\ZZZ\frac{1}{2}$.
  Since $X$ is geometrically rational, there exists a section $s$ on $X$ defined over $\overbar\kk$ (by a corollary to Tsen's theorem\cite[Corollary 6.6.2, p.232]{kollar99}).
  As $-K_X\cdot s$ is an integer and $D\cdot s=a(-K_X\cdot s)+b$, we also find that $b\in\frac{1}{2}\ZZZ$.
\end{proof}


\begin{lemma}\label{lemma:InverseSarkisovProgram}
  Let $X_1\to W_1$ and $X_2\to W_2$ be two Mori conic bundles that are geometrically rational and let $\varphi\colon X_1\dashto X_2$ be a birational map that preserves the fibration, that is the diagram
  \begin{center} \begin{tikzpicture}
    \matrix (m) [matrix of math nodes,row sep=1em,column sep=1em,minimum width=2em]
    {
        X_1 & & X_2  \\
        W_1 & & W_2 \\};
    \path[-stealth]
    (m-1-1) edge[dashed] node[above] {$\varphi$} (m-1-3)
    (m-1-1) edge node {} (m-2-1)
    (m-2-1) edge[white] node[black] {$\simeq$} (m-2-3)
    (m-1-3) edge node {} (m-2-3)
    ;
  \end{tikzpicture}
  \end{center} commutes.
  Let $H_1\sim -\lambda_1K_{X_1}+\nu_1 f$ be a linear system without fixed component on $X$, and let $H_2\sim -\lambda_2K_{X_2}+\nu_2 f$ be the strict transform of $H_1$ on $X_2$. Then,
  \begin{enumerate}
      \item $\lambda_1=\lambda_2=:\lambda$,
      \item $\nu_2 = \nu_1 + \sum \abs{{\omega_1}}\left(\lambda - m_{\omega_1}\right)$, where the sum runs over all orbits ${\omega_1}$ in $\Bas(\varphi)$,
      \item $m_{\omega_2}(H_2)=2\lambda- m_{\omega_1}$ for orbits $\omega_2$ in $\Bas(\varphi^{-1})$ $($and $\omega_1$ the corresponding orbit in $\Bas(\varphi)$$)$,
  \end{enumerate} where $\abs{\omega_i}$ denotes the size of the orbit $\omega_i$, and $m_{\omega_i}=m_{\omega_i}(H_i)$ denotes the multiplicity of $H_i$ at the points in ${\omega_i}$, for $i=1,2$.
\end{lemma}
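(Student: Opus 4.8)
The plan is to reduce to a single elementary link using Corollary~\ref{corollary:PreservingTheFibrationMeansTypeII}, carry out an intersection-free coefficient comparison on a minimal resolution, and then reassemble the general statement by induction along the decomposition. If $\varphi$ is an isomorphism of Mori fiber spaces the base loci are empty and all three claims are trivial, so assume not. Then Corollary~\ref{corollary:PreservingTheFibrationMeansTypeII} writes $\varphi=\varphi_n\circ\cdots\circ\varphi_1$ with each $\varphi_i\colon Y_{i-1}\dashto Y_i$ an elementary link of type II between Mori conic bundles: the blow-up of one orbit $\omega^{(i)}$ of $r_i$ points on $r_i$ distinct smooth fibers, followed by the contraction of the strict transforms of those fibers. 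Writing $H^{(i)}$ for the strict transform of $H_1$ on $Y_i$, say $H^{(i)}\sim-\lambda_i K_{Y_i}+\nu^{(i)}f$, it suffices to prove the three formulas across one such link and then to compose.

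For the single link (the heart of the argument) I drop the index: let $\sigma\colon Z\to Y_{i-1}$ blow up $\omega=\{p_1,\ldots,p_r\}$ with exceptional divisors $E_1,\ldots,E_r$, and let $\tau\colon Z\to Y_i$ contract the strict transforms $\tilde f_j\sim\sigma^*f-E_j$ of the $r$ fibers through $\omega$, each of which is a $(-1)$-curve. Since $H_1$, hence $H^{(i-1)}$, is defined over $\kk$ and $\omega$ is a single Galois orbit, the multiplicity of $H^{(i-1)}$ along $\omega$ is one number $m_\omega$, so $\sigma^*H^{(i-1)}=\hat H+m_\omega\sum_j E_j$, where $\hat H$ is the common strict transform on $Z$ (equal to the strict transform of $H^{(i)}$). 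I now use $K_Z=\sigma^*K_{Y_{i-1}}+\sum_j E_j=\tau^*K_{Y_i}+\sum_j\tilde f_j$, the relation $\sum_j\tilde f_j\sim r\,\sigma^*f-\sum_j E_j$, and the fact that $\{K_Z,\sigma^*f,E_1,\ldots,E_r\}$ is a basis of $\Pic_{\overbar\kk}(Z)_\QQQ$. Expanding $\hat H$ in this basis from the $\sigma$-side gives the coefficients $(-\lambda_{i-1},\,\nu^{(i-1)},\,\lambda_{i-1}-m_\omega)$, while from the $\tau$-side one gets $(-\lambda_i,\,\nu^{(i)}+r(\lambda_i-m_{\omega'}),\,-(\lambda_i-m_{\omega'}))$ for the contracted orbit $\omega'\subset Y_i$. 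Comparing the coefficient of $K_Z$ yields $\lambda_{i-1}=\lambda_i$; comparing the coefficient of $E_j$ yields $m_{\omega'}=2\lambda-m_\omega$; comparing the coefficient of $\sigma^*f$ yields $\nu^{(i)}=\nu^{(i-1)}+|\omega|(\lambda-m_\omega)$.

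Composing these link-by-link statements gives $\lambda_0=\cdots=\lambda_n$, which is (1); summing the increments gives $\nu_2=\nu_1+\sum_i|\omega^{(i)}|(\lambda-m^{(i)})$; and the multiplicity identity holds at each contracted orbit. The remaining, and most delicate, step is to match this combinatorial data with the intrinsic base loci. The orbits $\omega^{(i)}$ blown up by the successive links are exactly the orbits of $\Bas(\varphi)$ — a fiber carrying infinitely near base points is blown up by several consecutive links, each contributing a distinct orbit of $\Bas(\varphi)$ at its own level — and $m^{(i)}$ is precisely the multiplicity $m_{\omega_1}(H_1)$ of $H_1$ at that orbit $\omega_1$; dually the contracted orbits exhaust $\Bas(\varphi^{-1})$, with $\omega_2\in\Bas(\varphi^{-1})$ corresponding to the orbit $\omega_1\in\Bas(\varphi)$ blown up by the same link. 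This identification is the standard correspondence between base points of a birational map and exceptional divisors of a minimal resolution, here compatible with the fibration because every modified fiber lies over the same orbit of $W_1\cong W_2$. Granting it, the sum over the $\omega^{(i)}$ becomes the sum over $\Bas(\varphi)$ in (2), and the per-link relation becomes (3). I expect this bookkeeping — keeping proper and infinitely near base points, and their strict-transform multiplicities, aligned along the whole chain — to be the main obstacle, whereas the single-link linear algebra is routine.
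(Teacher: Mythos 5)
Your proposal is correct and takes essentially the same route as the paper: the paper's own proof also reduces to a single link of type II (the factorization into such links and the ``repeating this for every orbit $\omega_1\in\Bas(\varphi)$'' step being left implicit), expresses the strict transforms on the resolution in terms of the canonical class, the fiber class and the exceptional divisors, and compares coefficients --- your write-up merely makes the decomposition via Corollary~\ref{corollary:PreservingTheFibrationMeansTypeII} and the infinitely-near bookkeeping explicit. One slip worth fixing: $\{K_Z,\sigma^*f,E_1,\ldots,E_r\}$ is \emph{not} a basis of $\Pic_{\overbar\kk}(Z)_\QQQ$ (its rank is $2+s+r$, where $s=8-K_{Y_{i-1}}^2$ is the number of singular fibers of $Y_{i-1}\to W$, cf.\ Remark~\ref{remark:ConicBundleOverP1}), but intersecting with $\sigma^*f$ and with each $E_j$ shows the set is linearly independent, and both of your expressions for $\hat H$ visibly lie in its span, so the coefficient comparison remains valid.
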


\begin{proof}
  Consider the minimal resolution \begin{center}
      \begin{tikzpicture}
        \matrix (m) [matrix of math nodes,row sep=1em,column sep=1em,minimum width=2em]
        {
          & S &  \\
            X_1 & & X_2.  \\
            };
        \path[-stealth]
        (m-2-1) edge[dashed] node[below] {$\varphi$} (m-2-3)
        (m-1-2) edge node[above] {$\sigma_1$} (m-2-1)
        (m-1-2) edge node[above] {$\sigma_2$} (m-2-3)
        ;
      \end{tikzpicture}%
    \end{center}
  We consider the case where $\varphi$ is one link of type II, that is, $\sigma_i$ is the blow-up of one orbit $\omega_i$.
  Let $E_i\subset S$ be the exceptional divisor of the blow-up $\sigma_i$ for $i=1,2$, and let $\hat f$ be a general fiber on $S$.
  So $E_1+E_2=\abs{\omega_1}\hat f$.
  We compute \begin{align*}
      \tilde H_i &= \sigma_i^*(H_i)-m_{\omega_i} E_i\\
      &= -\lambda_i \sigma_i^*(K_{X_i})+\nu_i \sigma_i^*(f)- m_{\omega_i} E_i\\
      &= -\lambda_i (K_S-E_i) +\nu_i \hat f -m_{\omega_i} E_i\\
      & = -\lambda_i K_S +\nu_i \hat f +(\lambda_i -m_{\omega_i})E_i
  \end{align*}
 Replacing $E_1=\abs{\omega_1}\hat f-E_2$ in $\tilde H$ we get \begin{align*}
      \tilde H_1 & = -\lambda_1 K_S+ \left(\nu_1+ \abs{\omega_1}(\lambda_1-m_{\omega_1})\right) \hat f + (m_{\omega_1}-\lambda_1)E_2.
  \end{align*}
  Comparing $\tilde H_1 =\tilde H_2$ we find $\lambda_2=\lambda_1$, $\nu_2=\nu_1 +\abs{\omega_1}(\lambda-m_{\omega_1})$, and $m_{\omega_2}=2\lambda-m_{\omega_1}$.
  Repeating this for every orbit $\omega_1\in \Bas(\varphi)$, we find $\lambda_2=\lambda_1$ and $\nu_2=\nu+\sum \abs{\omega_1}(\lambda-m_{\omega_1})$.
\end{proof}

According to the previous lemma, the behaviour of $\lambda$ and of the multiplicities under a birational map $\Phi$ preserving the fibration is well understood.
The following lemma deals with the situation when $\Phi$ has large Galois depth and is followed by a birational map $\Psi$ that does not preserve the fibration.
We introduce two constants: $\Delta\geq8$, the Galois depth of $\Phi$, and $\delta>0$, a constant that bounds from above the multiplicities of the linear system in the large base orbits with respect to $\lambda$.
The idea is to play with $\Delta$ and $\delta$ to achieve that $\lambda$ increases while the multiplicities do not increase too much (with respect to the ``new'' $\lambda$).
The desired outcome would be $\Delta=9$, but we do not achieve this.
As it is a very technical lemma, the reader might be interested in first reading Corollary~\ref{corollary:NotAnIsoOfConicBundles} and then the proof of Theorem~\ref{theorem:GeneratingRelations} in Section~\ref{section:GeneratingRelations} to see how it is applied.

\begin{lemma}\label{lemma:InductionStepLambda}
  Let $\Delta=16$ and $\delta=\frac{1}{2}$.
  Let $X\to V$, $Y\to W$ and $X'\to V'$ be three minimal Mori conic bundles and let $\Phi\colon X\dashto Y$ be a birational map of Galois depth $\geq \Delta$ and $\Psi\colon Y\dashto X'$ a birational map of Galois depth $<\Delta$ such that
   \begin{center} \begin{tikzpicture}
    \matrix (m) [matrix of math nodes,row sep=1em,column sep=1em,minimum width=2em]
    {
        X & & Y & & X' \\
        V & &W & & V'. \\};
    \path[-stealth]
    (m-1-1) edge[dashed] node[above] {$\Phi$} (m-1-3)
    (m-1-3) edge[dashed] node[above] {$\Psi$} (m-1-5)
    (m-1-1) edge node {} (m-2-1)
    (m-2-1) edge[white] node[black] {$\simeq$} (m-2-3)
    (m-1-3) edge node {} (m-2-3)
    (m-2-3) edge[] node[black] {$\not$} (m-2-5)
    (m-1-5) edge node {} (m-2-5)
    ;
  \end{tikzpicture}%
  \end{center} where we mean by ``~$W\not\to V'$'' that there is no such morphism making the diagram commute.

  Let $H$ be a linear system on $X$ without fixed component, so \[H\sim -\lambda K_X+\nu f,\] where $f$ is a general fiber of $X\to V$, and let $H'\sim -\lambda' K_{X'} + \nu' f'$ be the strict transform of $H$ on $X'$.

  Let $\mu$ $($respectively $\mu'$$)$ be the maximum of the multiplicities $m_\omega(H)$ $($respectively $m_\omega(H')$$)$ for all orbits $\omega$ with $\abs{\omega}\geq\Delta$.

  Assume $\mu<\delta\lambda$.
  Then $\lambda'>\lambda$ and $\mu'<\delta\lambda'$.
\end{lemma}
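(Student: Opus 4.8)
The plan is to treat the two halves $\Phi$ and $\Psi$ separately: transport the multiplicity data of the large orbits unchanged across $\Psi$, and extract all the degree growth from the change of fibration.

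First I would dispose of $\Phi$. Since $\Phi$ preserves the fibration, Corollary~\ref{corollary:PreservingTheFibrationMeansTypeII} writes it as a composition of links of type II between Mori conic bundles, so Lemma~\ref{lemma:InverseSarkisovProgram} applies: the coefficient of $-K$ is an invariant, so the strict transform $H_Y\sim -\lambda K_Y+\nu_Y f$ of $H$ on $Y$ carries the same $\lambda$, and a multiplicity at a base orbit transforms by $m\mapsto 2\lambda-m$. Writing $\mu_Y$ for the maximum of $m_\omega(H_Y)$ over orbits $\omega$ on $Y$ with $\abs\omega\geq\Delta$, I would record the two facts that drive the estimate. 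On the one hand, because $\Phi$ has Galois depth $\geq\Delta$ it genuinely moves a large orbit, whose recorded multiplicity becomes $2\lambda-m$ with $m\leq\mu<\delta\lambda$; hence $\mu_Y>2\lambda-\mu>\tfrac32\lambda$. On the other hand, the points of such an orbit lie on distinct smooth fibres $f$ and $H_Y\cdot f=2\lambda$, so $m_\omega(H_Y)\leq 2\lambda$; hence $\tfrac32\lambda<\mu_Y\leq 2\lambda$.

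Next I would transport this across $\Psi$. By hypothesis $\Psi$ has Galois depth $<\Delta$, so no orbit of size $\geq\Delta$ lies in $\Bas(\Psi)$ or $\Bas(\Psi^{-1})$; thus $\Psi$ restricts to an isomorphism in a neighbourhood of every large orbit, the large orbits of $Y$ and of $X'$ are in multiplicity-preserving bijection, and therefore $\mu'=\mu_Y$. This reduces the assertion $\mu'<\delta\lambda'$ to the single degree inequality $\lambda'>2\mu_Y$; since $\mu_Y>\tfrac32\lambda>\lambda$, the same inequality also yields $\lambda'>\lambda$. So the whole content is a lower bound on $\lambda'$.

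The heart is to produce that bound from the fact that $\Psi$ changes the fibration with only small orbits at its disposal. Because $\Psi$ does not preserve the fibration, the strict transform $\hat f'$ on $Y$ of a general fibre $f'$ of $X'\to V'$ is not a fibre, so Lemma~\ref{lemma:DivisorsOnGeometricallyRationalCB} gives $\hat f'\sim -\hat a K_Y+\hat b f$ with $\hat a\geq\tfrac12$. Resolving $\Psi$ by $Y\xleftarrow{\sigma} Z\xrightarrow{\tau} X'$ yields the bookkeeping identities $2\lambda'=H_Y\cdot\hat f'-\sum_p m_p(H_Y)\,m_p(\hat f')$ and $(\hat f')^2=\sum_p m_p(\hat f')^2$, both sums running over the base orbits $p$ of $\Psi$, all of size $<\Delta$. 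The Noether--Fano mechanism for a fibration-changing map forces one of these small orbits to carry a high multiplicity of $\hat f'$ relative to $\hat a$ --- this is where the fibration gets untwisted --- and feeding this into the two identities, while bounding $K_Y^2$ and the degrees of the del Pezzo surfaces through which $\Psi$ factors by Lemmas~\ref{lemma:TypeIIIDelPezzo} and~\ref{lemma:TypeIIDelPezzo} (where the blown-up orbits have size $\leq 8$), produces a lower bound for $\lambda'$ of the required order. The main obstacle is closing this last inequality with room to spare: one needs $\lambda'>2\mu_Y$ while $\mu_Y$ may approach $2\lambda$, so the degree must more than triple, and every estimate on the base-point corrections, on $\hat a$, and on the intervening del Pezzo degrees must be pushed in the same direction. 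It is the accumulated slack in these bounds, not any conceptual gap, that forces the choice $\Delta=16$, $\delta=\tfrac12$ rather than the optimal $\Delta=9$; I would isolate each lossy step and fix $\Delta$ and $\delta$ only at the end, so that the final chain of inequalities holds.
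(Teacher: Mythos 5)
Your opening reduction is sound and coincides with the paper's: $\lambda_Y=\lambda$ via Corollary~\ref{corollary:PreservingTheFibrationMeansTypeII} and Lemma~\ref{lemma:InverseSarkisovProgram}, the observation that $\Psi$ is a local isomorphism near every orbit of size $\geq\Delta$ so that $\mu'$ is computed on $Y$ and bounded by $H_Y\cdot f_Y=2\lambda$, the class $g\sim -aK_Y+bf$ with $a\geq\frac12$ from Lemma~\ref{lemma:DivisorsOnGeometricallyRationalCB}, and the resolution identities $2\lambda'=H_Y\cdot g-\sum_j M_jN_j$ and $g^2=\sum_j N_j^2$. But the heart of your argument is a placeholder, and the mechanism you gesture at points the wrong way. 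You write that a ``Noether--Fano mechanism'' forces some small base orbit of $\Psi$ to carry a high multiplicity of $g$ relative to $a$, and that feeding this into the identities yields the lower bound on $\lambda'$. It cannot: in $2\lambda'=H_Y\cdot g-\sum_j m_{p_j}(H_Y)\,m_{p_j}(g)$, large multiplicities of $g$ at $\Bas(\Psi)$ enlarge the \emph{subtracted} term, so they push $\lambda'$ down, not up. In the paper's proof the entire contribution of $\Psi$'s base points is an error term, bounded above by Cauchy--Schwarz, $\sum M_jN_j\leq\sqrt{(\sum M_j^2)(\sum N_j^2)}\leq\sqrt{H^2g^2}$, and then absorbed; no positive contribution is extracted from $\Psi$ at all.

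The growth of $\lambda'$ comes from $\Phi$, through a quantity your sketch never identifies: writing $\nu_Y=\nu+\beta\lambda$, Lemma~\ref{lemma:InverseSarkisovProgram} gives $\beta=\sum\abs{\omega}\bigl(1-\frac{m_\omega}{\lambda}\bigr)$ over the base orbits of $\Phi$, and the hypotheses $\abs{\omega}\geq\Delta=16$ and $m_\omega\leq\mu<\delta\lambda=\frac12\lambda$ --- which enter your argument nowhere quantitatively, since you use $\mu<\delta\lambda$ only for the (unneeded) lower bound $\mu_Y>\frac32\lambda$ --- force $\beta>\Delta(1-\delta)=8$. The term $2a\nu_Y=2a(\nu+\beta\lambda)$ inside $H_Y\cdot g$ is the source of the quadrupling. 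Two further points are needed to close the estimate and are missing from your sketch. First, one must invoke Corollary~\ref{corollary:MoveSmallGalois depthToEnd} to rearrange so that \emph{every} base orbit of $\Phi$ has size $\geq\Delta$ (Galois depth $\geq\Delta$ alone does not give this); only then are $\Bas(\Phi)$ and $\Bas(\Psi)$ disjoint, so $M_j=m_{\Phi^{-1}(p_j)}(H)$ and $\sum M_j^2\leq H^2$ with the \emph{original} $\nu$, keeping $\beta$ out of the square root. Second, with $e_1=H^2/\lambda^2$ and $e_2=g^2/a^2$ one has $\frac{1}{a\lambda}H_Y\cdot g=\frac12(e_1+e_2)+2\beta$, and AM--GM gives $\frac{2\lambda'}{a\lambda}\geq\frac{e_1+e_2}{2}+2\beta-\sqrt{e_1e_2}\geq 2\beta$, whence $\lambda'\geq a\beta\lambda>8a\lambda\geq 4\lambda$, which yields both conclusions simultaneously since $\delta\lambda'>2\lambda\geq\mu'$. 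Note that $d=K_Y^2$ appears in both $e_1$ and $e_2$ and cancels in this step, so your appeal to Lemmas~\ref{lemma:TypeIIIDelPezzo} and~\ref{lemma:TypeIIDelPezzo} to bound intervening del Pezzo degrees is not needed; note also that your bound $\mu_Y>2\lambda-\mu$ is not justified for a composition of links, since the multiplicity at the base orbit of the last link of $\Phi$ is a multiplicity of an intermediate strict transform, not directly controlled by $\mu$ on $X$.
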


\begin{proof}
  By Corollary~\ref{corollary:PreservingTheFibrationMeansTypeII}, we can write $\Phi$ as a composition of links of type II between Mori conic bundles.
  By shifting the links of Galois depth $<\Delta$ from $\Phi$ to $\Psi$ via Corollary~\ref{corollary:MoveSmallGalois depthToEnd}, we change the intermediate conic bundle $Y\to W$ without changing $\lambda'$ and $\mu'$.
  So we can assume that each base orbit of $\Phi$ has size $\geq\Delta$.

  Since $\Psi$ is an isomorphism on the points lying in an orbit of size $\geq\Delta$, the maximal multiplicity $\mu'$ on $X'$ of $H'$ equals the multiplicity of $H_Y$ at a point in $Y$, where $H_Y\sim -\lambda_Y K_Y+\nu_Y f_Y$ is the strict transform of $H$ on $Y$.
  Hence, $\mu'\leq H_Y\cdot f_Y=2\lambda_Y$ (when taking $f_Y$ to be the fiber through a point of maximal multiplicity).
  As $\lambda_Y=\lambda$ by Lemma~\ref{lemma:InverseSarkisovProgram}, we get $\mu'\leq 2\lambda$.
  So if we show that $\lambda'>\frac{2}{\delta}\lambda=4\lambda$, then $\delta\lambda'>2\lambda\geq\mu'$ and $\lambda'>\lambda$ are implied.

  Let $g\subset Y$ be the pull back of a general fiber $f'$ of $X'\to V'$ under $\Psi$.
  We write $g\sim -aK_Y+bf$. As $g$ is not a fiber, $a\geq \frac{1}{2}$ by Lemma~\ref{lemma:DivisorsOnGeometricallyRationalCB}.
  We will use $H'\cdot f'=2\lambda'$ to find a lower bound for $\lambda'$.
  Consider a minimal resolution
  \begin{center}
    \begin{tikzpicture}
    \matrix (m) [matrix of math nodes,row sep=1em,column sep=1em,minimum width=2em]
    {
    & T&  \\
    Y & &X'.  \\};
    \path[-stealth]
    (m-2-1) edge[dashed] node[above] {$\Psi$} (m-2-3)
    (m-1-2) edge node {} (m-2-1)
    (m-1-2) edge node {} (m-2-3)
    ;
 \end{tikzpicture}%
\end{center}
  As $f'$ is a general fiber, we have $\tilde H'\cdot \tilde f'=H'\cdot f'$ for the strict transforms of $H'$ respectively $f'$ in $T$.
  So we find \begin{align*}
    2\lambda' & =H'\cdot f'\\
    & = \tilde H_{Y}\cdot \tilde g\\
    & = H_{Y}\cdot g - \sum_j m_{p_j}(H_{Y})m_{p_j}(g),
  \end{align*} where the points $p_j$ are the points blown up in $T\to Y$ (corresponding to points in $Y$ or infinitely near).
  Since $\Psi$ is of Galois depth $<\Delta$, the $p_j$ form orbits of size $<\Delta$.
  As we have remarked in the beginning of the proof, the base points of $\Phi$ consist of orbits of size $\geq \Delta$, hence the map $\Phi$ is a local isomorphism onto the points $p_j$.
  Hence, $M_j:=m_{p_j}(H_{Y})=m_{\Phi^{-1}(p_j)}(H)$.
  We have $H^2-\sum m_q(H)^2\geq0$, where the sum goes over all points $q$ that are either in $X$ or infinitely near (since $H$ is a linear system, hence nef), so also $H^2-\sum M_j^2\geq 0$.
  This gives an upper bound $\sum M_j^2\leq H^2$.

  Let $N_j=m_{p_j}(g)$.
  As $\tilde g^2=\tilde f'^2=0$, we find $g^2=\tilde g^2+\sum N_j^2=\sum N_j^2$.
  By Cauchy-Schwarz, we have $\left(\sum M_jN_j\right)^2\leq (\sum M_j^2)(\sum N_j^2)$ and, with the above discussion, get the inequality
  \begin{align}  \label{eqn:StuffForInequalityForLambda}
    2\lambda' & \geq H_{Y}\cdot g -\sqrt{\left(\sum M_j^2\right)\left(\sum N_j^2\right)}\nonumber\\
    & \geq H_{Y}\cdot g - \sqrt{H^2g^2}.
  \end{align}

  Let $\beta$ be such that $\nu_{Y}=\nu+\beta\lambda$, namely
  \[
    \beta=
  \sum \abs{\omega}(1 - \frac{m_\omega}{\lambda})>\Delta(1-\delta)=8,
  \]
  where the notation is from Lemma~\ref{lemma:InverseSarkisovProgram} and the inequalities come from our assumptions that $\abs{\omega}\geq\Delta=16$ and $m_\omega\leq\mu<\delta\lambda=\frac{1}{2}\lambda$.

  To compare $H_Y\cdot g$ with the square root of $H^2g^2$, let $d=K_Y^2$ and denote by $e_1$ the expression $\frac{1}{\lambda^2}H^2=\lambda(\lambda d+4\nu)=d+4\frac{\nu}{\lambda}$, and similary $e_2=\frac{1}{a^2}g^2=d+4\frac{b}{a}$.
  We compute
  \begin{align*}
    H_{Y}\cdot g& = (-\lambda_YK_Y +\nu_Y f)\cdot(-aK_Y+bf)\\
    &= a\lambda d+2b\lambda +2a\nu_Y\\
    &= \lambda(ad + 2b) +2a(\nu+\beta\lambda)
  \end{align*}
  and so
  \[
  \frac{1}{a\lambda}H_{Y}\cdot g
  = d+2\frac{b}{a} + 2\frac{\nu}{\lambda} +2\beta
  = \frac{1}{2}(e_1+e_2) + 2\beta.
  \]
  Therefore, \begin{align*}
        \frac{2\lambda'}{a\lambda} & \geq \frac{1}{a\lambda}\left(H_{Y}\cdot g - \sqrt{H^2g^2}\right)\\
        &= \frac{e_1}{2}+\frac{e_2}{2} +2\beta - \sqrt{e_1e_2}\\
        &\geq 2\beta,
  \end{align*}
  where the last inequality holds because of the inequality of the arithmetic and the geometric mean.
  Hence, using $a\geq\frac{1}{2}$, we conclude the proof with
  \[
  \lambda' \geq a\beta\lambda>8a\lambda\geq4\lambda.
  \]%
  So we have $\lambda'>\lambda$, and $\delta\lambda'=\frac{1}{2}\lambda'>\frac{1}{2}4\lambda=2\lambda\geq\mu'$.
\end{proof}

\begin{corollary}\label{corollary:NotAnIsoOfConicBundles}
  Let $\Delta=16$. Assume $N\geq 1$.
  For $i=0,\ldots, N$ let $X_i\to V_i$ and $Y_i\to W_i$ be Mori conic bundles that are geometrically rational with birational maps $\Phi_i\colon X_i\dashto Y_{i}$ of Galois depth $\geq\Delta$ and birational maps (for $i\neq 0$) $\Psi_i\colon Y_{i-1}\dashto X_{i}$ of Galois depth $<\Delta$ such that the diagram
  \begin{center}
    \begin{tikzpicture}
      \matrix (m) [matrix of math nodes,row sep=1em,column sep=2em,minimum width=2em]
      {
      ~ & X_0 & Y_0 & X_1 & & Y_{N-1} & X_N & Y_N & ~\\
      & V_0 & W_0 & V_1 & & W_{N-1} & V_N & W_N & \\};
      \path[-stealth]
      (m-1-1) edge[white] node[above] {~} (m-1-2)
      (m-1-2) edge[dashed] node[above] {$\Phi_0$} (m-1-3)
      (m-1-3) edge[dashed] node[above] {$\Psi_1$} (m-1-4)
      (m-1-4) edge[dashed] node[above] {} (m-1-6)
      (m-1-6) edge[dashed] node[above] {$\Psi_{N}$} (m-1-7)
      (m-1-7) edge[dashed] node[above] {$\Phi_N$} (m-1-8)
      (m-1-8) edge[white] node[above] {~} (m-1-9)
      (m-1-2) edge node {} (m-2-2)
      (m-2-2) edge[white] node[black] {$\simeq$} (m-2-3)
      (m-1-3) edge node {} (m-2-3)
      (m-2-3) edge[] node[black] {$\not$} (m-2-4)
      (m-1-4) edge node {} (m-2-4)
      (m-1-6) edge node {} (m-2-6)
      (m-2-6) edge[] node[black] {$\not$} (m-2-7)
      (m-1-7) edge node {} (m-2-7)
      (m-2-7) edge[white] node[black] {$\simeq$} (m-2-8)
      (m-1-8) edge node {} (m-2-8)
      ;
      \end{tikzpicture}
    \end{center}
    commutes, where we mean by ``~$W_{i-1} \not\to V_i$'' that there is no such morphism making the diagram commute.

 Let $\varphi=\Phi_N\Psi_{N}\Phi_{N-1}\cdots\Phi_1\Psi_1\Phi_0$, and let $H\sim -\lambda K_X+\nu f$ be a linear system without fixed component on $X=X_0$ and let $H'\sim -\lambda' K_{X'}+\nu' f$ be its strict transform in $X'=Y_N$ under $\varphi$.
  Then $\lambda'>\lambda$.
  In particular, there is no morphism $V_0\to W_N$ making the diagram
  \begin{center} \begin{tikzpicture}
   \matrix (m) [matrix of math nodes,row sep=1em,column sep=1em,minimum width=2em]
   {
   & & X_{0} & & Y_{N} \\
   & &V_0 & & W_N \\};
   \path[-stealth]
   (m-1-3) edge[dashed] node[above] {$\varphi$} (m-1-5)
   (m-1-3) edge node {} (m-2-3)
   (m-2-3) edge[] node[black] {$\not$} (m-2-5)
   (m-1-5) edge node {} (m-2-5)
   ;
 \end{tikzpicture}
 \end{center} commute, hence $\varphi$ is not an isomorphism of Mori conic bundles.
\end{corollary}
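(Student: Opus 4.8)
The plan is to deduce the estimate $\lambda'>\lambda$ by induction on $N$, applying Lemma~\ref{lemma:InductionStepLambda} once for each block in which the fibration is broken, and carrying the \emph{given} linear system $H$ together with its successive strict transforms through the whole chain (rather than replacing it by an auxiliary system). The starting point is to regroup the composition as
\[
  \varphi=\Phi_N\circ(\Psi_N\Phi_{N-1})\circ\cdots\circ(\Psi_1\Phi_0),
\]
so that each block $\Psi_i\Phi_{i-1}\colon X_{i-1}\dashto X_i$ is the fibration-preserving map $\Phi_{i-1}$ of Galois depth $\geq\Delta$ followed by the fibration-breaking map $\Psi_i$ of Galois depth $<\Delta$ (the asserted non-existence of a morphism $W_{i-1}\to V_i$ is exactly the statement that $\Psi_i$ does not preserve the fibration). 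Each block is therefore precisely the configuration $X\xrightarrow{\Phi}Y\xrightarrow{\Psi}X'$ to which Lemma~\ref{lemma:InductionStepLambda} applies.

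I would let the induction carry, at each stage $i$, the strict transform $H_{X_i}\sim-\lambda^{(i)}K_{X_i}+\nu^{(i)}f$ of $H$ on $X_i$, together with $\mu^{(i)}$, the maximum multiplicity of $H_{X_i}$ over orbits of size $\geq\Delta$. The hypothesis under which the argument runs is the standing bound $\mu^{(0)}<\delta\lambda^{(0)}$ of Lemma~\ref{lemma:InductionStepLambda}, with $\delta=\tfrac12$ and $\lambda^{(0)}=\lambda$. This bound is genuinely part of the hypotheses and not automatic: since $\varphi^{-1}$ has the same alternating shape and, Galois depth being insensitive to inversion, the same depths, applying the estimate to both $\varphi$ and $\varphi^{-1}$ would otherwise force $\lambda'>\lambda$ and $\lambda>\lambda'$ at once; what breaks the symmetry is that the trailing $\Phi_N$ raises the multiplicity in $\Bas(\Phi_N^{-1})$ to $2\lambda'-m>\delta\lambda'$, so that $H'$ violates the analogous bound on $Y_N$.

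The induction on $N$ then proceeds by peeling off the first block. Applying Lemma~\ref{lemma:InductionStepLambda} to $\Psi_1\Phi_0$ with the linear system $H$ itself yields on $X_1$ a strict transform $H_{X_1}$, again without fixed component, with $\lambda^{(1)}>\lambda^{(0)}$ and $\mu^{(1)}<\delta\lambda^{(1)}$. The remaining composition $\Phi_N\Psi_N\cdots\Phi_1\colon X_1\dashto Y_N$ has the same shape with $N-1$ blocks, and $H_{X_1}$ satisfies the standing bound; so the induction hypothesis gives $\lambda'>\lambda^{(1)}$, whence $\lambda'>\lambda^{(1)}>\lambda^{(0)}=\lambda$. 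The base case $N=1$ is this computation with the trailing map alone: $\Phi_1$ preserves the fibration, so by Lemma~\ref{lemma:InverseSarkisovProgram} it leaves $\lambda$ unchanged and $\lambda'=\lambda^{(1)}>\lambda$.

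For the final assertion, if some morphism $V_0\to W_N$ made the square commute then $\varphi$ would preserve the fibration, and Lemma~\ref{lemma:InverseSarkisovProgram} would give $\lambda'=\lambda$, contradicting $\lambda'>\lambda$; hence no such morphism exists and $\varphi$ is not an isomorphism of Mori conic bundles. I expect the two points needing care to be, first, checking that each block genuinely meets the hypotheses of Lemma~\ref{lemma:InductionStepLambda} — in particular that after shifting the low-depth links into the $\Psi$-part via Corollary~\ref{corollary:MoveSmallGalois depthToEnd} every base orbit of the preserving part has size $\geq\Delta$ — and, second, verifying that the strict transform remains a linear system without fixed component at each stage, so that the lemma keeps applying and the inequality $\mu^{(i)}<\delta\lambda^{(i)}$ propagates to initialize the next block.
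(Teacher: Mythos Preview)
Your approach is essentially the paper's: iterate Lemma~\ref{lemma:InductionStepLambda} block by block, using that each $\Phi_i$ preserves $\lambda$ (Lemma~\ref{lemma:InverseSarkisovProgram}) while each passage through a $\Psi_i$ strictly increases it, and derive the final assertion from the resulting $\lambda'>\lambda$. Your induction is set up correctly.

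The difference lies in how the initial bound $\mu^{(0)}<\delta\lambda^{(0)}$ is obtained. You correctly observe, via the symmetry argument, that this bound is not implied by the stated hypotheses on $H$ and that without it the conclusion $\lambda'>\lambda$ cannot hold in general; you therefore treat it as an additional hypothesis. The paper instead resolves this by simply taking $H$ smooth (so $\mu=0$), which is permissible because the only downstream use of the corollary is the ``In particular'' clause, where one is free to choose $H$. So your decision to carry the given $H$ through ``rather than replacing it by an auxiliary system'' is precisely what the paper does \emph{not} do: the replacement is the whole point, and it dissolves the difficulty you identify. As your symmetry argument shows, the first assertion of the corollary taken literally (for arbitrary $H$ without fixed component) is not quite true and is not used elsewhere.

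The two caveats you list at the end are harmless: the shifting of low-depth links into the $\Psi$-part is performed inside the proof of Lemma~\ref{lemma:InductionStepLambda} and need not be verified separately, and the strict transform of a linear system without fixed component under a birational map again has no fixed component.
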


\begin{proof}

  This is a direct corollary from Lemma~\ref{lemma:InductionStepLambda}:
  We can assume that $H$ is smooth, hence $\mu=0$, and we can apply the lemma.

  For the last part: If $\varphi$ would preserve the fibration, then it would be of type II, hence we would have $\lambda'=\lambda$, a contradiction to Lemma~\ref{lemma:InverseSarkisovProgram}.
\end{proof}

\subsection{Generating relations}\label{section:GeneratingRelations}

\begin{proof}[Proof of Theorem~\ref{theorem:GeneratingRelations}]
  The statement was already proven in Lemma~\ref{lemma:NotGeometricallyRational} if $X$ is not geometrically rational.
  So we assume now that $X$ is geometrically rational.

  Let $\varphi_n\cdots\varphi_1=\id$ be a relation in $\BirMori(X)$, where $\varphi_i\colon Z_{i-1}\dashto Z_i$ is a Sarkisov link of Galois depth $d_i$.
  If all $d_i\leq 15$, we are in situation~\ref{item:GeneratingRelations--Small}.

  Otherwise, the base points of at least one of the $\varphi_i$ contains an orbit of size $\geq 16$.
  In particular, $\varphi_i\colon Z_{i-1}\dashto Z_i$ is a link of type II between Mori conic bundles (since these are the only links of big Galois depth, see Remark~\ref{remark:SmallGalois depth}).
  We will prove that we are always in the situation of Lemma~\ref{lemma:RelationsGeneratedByFour} using Corollary~\ref{corollary:NotAnIsoOfConicBundles}.
  By replacing the relation with
  \[
  \varphi_{i-1}\cdots\varphi_1\varphi_n\cdots\varphi_{i+1}\varphi_i,
  \]
  we can assume that $Z_0$ is a Mori conic bundle.
  We consider the relator $\varphi=\varphi_n\cdots\varphi_1$ and write it -- as in Remark~\ref{remark:DecompositionOfBirMaps} -- as
  \[
  \varphi=\Phi_N \Psi_{N}\cdots \Phi_1\Psi_1\Phi_0 ,
  \]
  where for $i=0,\ldots, N$ the $X_i\to V_i$ and $Y_i\to W_i$ are Mori conic bundles with birational maps $\Phi_i\colon X_i\dashto Y_{i}$ that are a composition of links of type II between Mori conic bundles, and birational maps (for $i\neq 0$) $\Psi_i\colon Y_{i-1}\dashto X_{i}$ of Galois depth $\leq 15$.

  If $N=0$ then $\varphi=\Phi_0$ is a composition of links of type II between Mori conic bundles. The result follows with Lemma~\ref{lemma:RelationsGeneratedByFour}.

  If $N\geq1$, we can assume with Corollary~\ref{corollary:MoveSmallGalois depthToEnd} that each $\Phi_i$ is either a product of links of Galois depth $\geq 16$ or an isomorphism.
  Now, we change our decomposition of the relator $\varphi$ such that it is of the form of Corollary~\ref{corollary:NotAnIsoOfConicBundles}.
  If one of the $\Phi_i$ is an isomorphism, we look at the birational map $\Psi'_i=\Psi_{i+1}\Phi_i\Psi_i:Y_{i-1}\dashto X_{i+1}$.
  There are two possibilities:
  Either $\Psi'_i$ preserves the fibration, or it does not.
  If it does not, we replace $\Psi_{i+1}\Phi_i\Psi_i$ with $\Psi_i'$ in the decomposition of $\varphi$. Note that $\Psi_i'$ is of Galois depth $\leq 15$.
  If $\Psi_i'$ preserves the fibration, we replace $\Phi_{i-1}$ with $\Phi_{i-1}'=\Phi_{i+1}\Psi_i'\Phi_{i-1}\colon X_{i-1}\dashto Y_{i+1}$.
  Applying Corollary~\ref{corollary:MoveSmallGalois depthToEnd} once more, we can assume that $\Phi_{i-1}'$ is a product of links of Galois depth $\geq 16$ or an isomorphism. In the latter case we repeat the process.

  In this way, we arrive either at the case $N=0$ and we are done, or we are in the situation of Corollary~\ref{corollary:NotAnIsoOfConicBundles}, which implies that the relator $\varphi$ is not an isomorphism, a contradiction.
\end{proof}

\section{Detour to Galois theory for non-experts}\label{section:GaloisTheory}

In this section we will prove that a perfect field $\kk$ with $[\overbar\kk:\kk]>2$ contains an arbitrarily large Galois orbit.
Moreover, for any integer $\Delta\geq 2$ (for example $\Delta=8$) we construct in Lemma~\ref{lemma:FieldWithouthSomeOrbit} a perfect field with $[\bar\kk:\kk]>2$ that has no Galois orbit of size exactly $\Delta$, implying that our approach works for slightly more fields than the approach via Bertini involutions of \cite{LZ19}.
We recall first the statements from Galois theory that we need.

A field $\kk$ is called \textit{perfect} if every algebraic extension is separable.
In particular, any finite extension of a perfect field is again perfect.
A finite field extension $L/K$ is called \textit{Galois} if it is normal and separable.
In this case, the extension degree $[L:K]$ equals the number of elements in the Galois group $\Gal(L/K)$.

Moreover, for any splitting field $L$ of an irreducible polynomial $f\in\kk[x]$, the extension $L/\kk$ is normal.
So if $\kk$ is perfect, then $L/\kk$ is Galois.
We will use the Artin-Schreier Theorem \cite[{Corollary 9.3, Chapter IV}]{lang05} and the Primitive Element Theorem \cite[Theorem 4.6, Chapter V]{lang05}.

\begin{lemma}  \label{theorem:DegreeOfExtensionEqualssizeOfOrbit}
  Let $L/\kk$ be a finite Galois extension.
  For $\gamma\in L$ the degree of $[\kk(\gamma):\kk]$ equals the length of the orbit of $\gamma$ under the action of $\Gal(L/\kk)$.
\end{lemma}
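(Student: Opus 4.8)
The plan is to prove the standard fact that for a finite Galois extension $L/\kk$ and an element $\gamma\in L$, the Galois orbit of $\gamma$ is precisely the set of roots of the minimal polynomial of $\gamma$ over $\kk$, and that this set has size $[\kk(\gamma):\kk]$. The key observation is that the orbit $\Gal(L/\kk)\cdot\gamma$ is in canonical bijection with the coset space $\Gal(L/\kk)/\Stab(\gamma)$, where $\Stab(\gamma)$ is the stabiliser of $\gamma$, and that this stabiliser is exactly the subgroup fixing the intermediate field $\kk(\gamma)$.

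First I would identify the stabiliser. An element $\tau\in\Gal(L/\kk)$ fixes $\gamma$ if and only if it fixes every element of $\kk(\gamma)$, since $\kk(\gamma)$ is generated over $\kk$ by $\gamma$ and every $\tau$ fixes $\kk$ pointwise. Hence $\Stab(\gamma)=\Gal(L/\kk(\gamma))$. By the orbit-stabiliser theorem the length of the orbit of $\gamma$ equals the index
\[
  [\Gal(L/\kk):\Gal(L/\kk(\gamma))].
\]
Next I would invoke the fundamental theorem of Galois theory, which applies because $L/\kk$ is Galois and hence so is $L/\kk(\gamma)$: the order of $\Gal(L/\kk(\gamma))$ equals $[L:\kk(\gamma)]$, and the order of $\Gal(L/\kk)$ equals $[L:\kk]$. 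Combining these with the tower law $[L:\kk]=[L:\kk(\gamma)]\,[\kk(\gamma):\kk]$, the index above simplifies to
\[
  \frac{[L:\kk]}{[L:\kk(\gamma)]}=[\kk(\gamma):\kk],
\]
which is exactly the claimed equality between orbit length and extension degree.

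I do not expect any serious obstacle here, as this is a textbook consequence of Galois theory; the only point requiring care is to confirm that $L/\kk(\gamma)$ is itself Galois (so that $|\Gal(L/\kk(\gamma))|=[L:\kk(\gamma)]$ rather than merely a divisor of it). This follows because normality and separability of $L/\kk$ are inherited by $L/\kk(\gamma)$ for any intermediate field $\kk(\gamma)$, or equivalently because $\kk$ is perfect so every subextension of $\overbar\kk/\kk$ remains separable. With that in hand the result is immediate. An alternative route, which I would mention only if one prefers to avoid explicit invocation of the fundamental theorem, is to argue directly that the orbit of $\gamma$ coincides with the full set of distinct roots of the minimal polynomial $m_\gamma\in\kk[x]$: each $\tau\in\Gal(L/\kk)$ permutes these roots, the action is transitive because $L/\kk$ is normal, and the number of distinct roots is $\deg m_\gamma=[\kk(\gamma):\kk]$ by separability. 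Either approach yields the stated conclusion.
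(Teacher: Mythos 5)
Your proof is correct and follows essentially the same route as the paper's: identify the stabiliser of $\gamma$ as $\Gal(L/\kk(\gamma))$, note that $L/\kk(\gamma)$ inherits normality and separability so the orbit--stabiliser theorem and the degree formula $\abs{\Gal(L/\kk(\gamma))}=[L:\kk(\gamma)]$ combine with the tower law to give $[\kk(\gamma):\kk]$. Your added care about why $L/\kk(\gamma)$ is Galois, and the alternative argument via the roots of the minimal polynomial, are both sound but do not change the substance of the argument.
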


\begin{proof}
  As $L/\kk$ is normal and separable by assumption, also $L/\kk(\gamma)$ is normal \cite[Chapter V, Theorem 3.4]{lang05} and separable, hence it is Galois.
  Note that the stabilizer of $\gamma$ is $\Gal(L/\kk(\gamma))$.
  By the orbit formula, the length of the orbit of $\gamma$ under the action of $\Gal(L/\kk)$ equals $[\Gal(L/\kk):\Gal(L/\kk(\gamma))]$, which is equal to \[\frac{\abs{\Gal(L/\kk)}}{\abs{\Gal(L/\kk(\gamma))}}=\frac{[L:\kk]}{[L:\kk(\gamma)]}=[\kk(\gamma):\kk].\]
\end{proof}

\begin{lemma}\label{lemma:LargeGaloisOrbitExists}
  Let $\kk$ be a perfect field with $[\overbar\kk:\kk]>2$ and let $\Delta\geq1$.
  Then $\overline\kk$ contains an orbit of length $\geq \Delta$ under the action of $\Gal(\overbar\kk/\kk)$.
\end{lemma}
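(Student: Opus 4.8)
The plan is to reduce the statement to the existence of finite separable extensions of $\kk$ of arbitrarily large degree, and then invoke Lemma~\ref{theorem:DegreeOfExtensionEqualssizeOfOrbit} to convert extension degree into orbit length. First I would observe that it suffices to produce, for each $\Delta$, an element $\gamma\in\overbar\kk$ with $[\kk(\gamma):\kk]\geq\Delta$. Indeed, taking $L$ to be the splitting field of the minimal polynomial of $\gamma$ (a finite Galois extension of $\kk$, since $\kk$ is perfect), Lemma~\ref{theorem:DegreeOfExtensionEqualssizeOfOrbit} shows that the orbit of $\gamma$ under $\Gal(L/\kk)$ has length $[\kk(\gamma):\kk]\geq\Delta$; as this orbit is exactly the set of roots of the minimal polynomial of $\gamma$, it coincides with the orbit of $\gamma$ under the full group $\Gal(\overbar\kk/\kk)$, which therefore has size $\geq\Delta$.

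So the core task is to show that $\kk$ admits finite extensions of unbounded degree. I would argue by contradiction: suppose every finite extension of $\kk$ has degree bounded by some integer, and set $d=\sup\{[L:\kk]\mid L/\kk\text{ finite}\}<\infty$, realized by some finite extension $L$. If $L\neq\overbar\kk$, pick $\alpha\in\overbar\kk\setminus L$; then $L(\alpha)/\kk$ is finite with $[L(\alpha):\kk]=[L(\alpha):L]\,[L:\kk]>d$, since $[L(\alpha):L]>1$. This contradicts the maximality of $d$, so $L=\overbar\kk$ and $[\overbar\kk:\kk]=d$ is finite.

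The main obstacle, and the place where the hypothesis $[\overbar\kk:\kk]>2$ enters crucially, is to rule out this finite case. Here I would invoke the Artin-Schreier Theorem: a proper finite extension $\overbar\kk/\kk$ with $\overbar\kk$ algebraically closed forces $[\overbar\kk:\kk]=2$. Thus finiteness of $[\overbar\kk:\kk]$ would give $[\overbar\kk:\kk]\in\{1,2\}$, contradicting $[\overbar\kk:\kk]>2$. Therefore finite extensions of $\kk$ have unbounded degree, so for each $\Delta$ there is a finite extension $M/\kk$ with $[M:\kk]\geq\Delta$. Since $\kk$ is perfect, $M/\kk$ is separable, and the Primitive Element Theorem yields $\gamma$ with $M=\kk(\gamma)$, whence $[\kk(\gamma):\kk]=[M:\kk]\geq\Delta$; this completes the reduction from the first paragraph and proves the lemma.
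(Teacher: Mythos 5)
Your proposal is correct and follows essentially the same route as the paper: Artin--Schreier rules out $[\overbar\kk:\kk]\leq 2$ being all that can happen, finite extensions of unbounded degree are produced (the paper by a direct inductive tower with $[L_n:\kk]\geq 2^n$, you by a contradiction argument on a maximal finite degree --- a trivial rearrangement), and then the Primitive Element Theorem plus the splitting field and Lemma~\ref{theorem:DegreeOfExtensionEqualssizeOfOrbit} convert degree into orbit length. Your explicit remark that the orbit under $\Gal(L/\kk)$ agrees with the orbit under $\Gal(\overbar\kk/\kk)$ is a small point the paper glosses over, and it is handled correctly.
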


\begin{proof}
  The Artin-Schreier Theorem directly implies that the degree $[\overbar\kk:\kk]$ is infinite, and hence for any finite field extension $L/\kk$ with $L\subset \overbar\kk$ we have that $L$ is not equal to the algebraic closure $\overbar\kk$.
  We inductively construct a series of finite field extensions $L_n/\kk$ such that $[L_n:\kk]\geq2^{n}$.
  For the base case, set $L_0=\kk$ so $[L_0:\kk]=1=2^0$ is finite.
  For the induction step $n-1\to n$, assume that there is a finite field extension $L_{n-1}/\kk$ with $[L_{n-1}:\kk]\geq 2^{n-1}$.
  Hence, $L_{n-1}\neq\overbar\kk$ and so there exists $\alpha_n\in\overbar\kk\setminus L_{n-1}$.
  Set $L_n=L_{n-1}(\alpha_n)$, so $L_n/L_{n-1}$ is a finite extension. As $L_n\neq L_{n-1}$ we have $[L_n:L_{n-1}]\geq 2$.
  The induction hypothesis implies with $[L_n:\kk]=[L_n:L_{n-1}][L_{n-1}:\kk]$ that
  \[
    \infty>[L_n:\kk]\geq 2\cdot2^{n-1}=2^n,
  \]
  which means that $L_n/\kk$ is a finite extension of degree $\geq 2^n$.

  Now, choose $n$ such that $2^n\geq \Delta$.
  As $L_n/\kk$ is an algebraic extension of the perfect field $\kk$, it is a separable extension.
  The Primitive Element Theorem can be applied and provides the existence of  $\gamma\in L_n$ such that $L_n=\kk(\gamma)$.
  Take a finite Galois extension $L/\kk$ with $\gamma\in L\subset\overbar\kk$. (For example take $L$ to be the splitting field in $\overbar\kk$ of the minimal polynomial of $\gamma$ over $\kk$.)
  By Lemma~\ref{theorem:DegreeOfExtensionEqualssizeOfOrbit}, the orbit of $\gamma$ is of length $[\kk(\gamma):\kk]=[L_n:\kk]\geq \Delta$.
\end{proof}

Note that we do not claim that an orbit of exact size $\Delta$ exists.
In fact, for any $\Delta\geq 2$ there exists a perfect field with no Galois orbit of exact size $\Delta$, namely the following example that was provided to me by Lars Kuehne.

\begin{lemma}\label{lemma:FieldWithouthSomeOrbit}
  Let $\Delta\geq 2$.
  There exists a perfect field $\kk$ with $[\overbar\kk:\kk]=\infty$ such that no element in $\overbar\kk$ has an orbit of length $\Delta$ under the action of $\Gal(\overbar\kk/\kk)$.
\end{lemma}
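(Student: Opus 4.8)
The plan is to construct, for each $\Delta\geq 2$, a perfect field $\kk$ whose absolute Galois group $\Gamma=\Gal(\overbar\kk/\kk)$ is a $p$-group (in the profinite sense) for a suitable prime $p$, chosen so that $p$ does not divide $\Delta$ but $[\overbar\kk:\kk]$ is still infinite. By Lemma~\ref{theorem:DegreeOfExtensionEqualssizeOfOrbit}, the length of the orbit of any $\gamma\in\overbar\kk$ equals $[\kk(\gamma):\kk]$, which is a finite divisor of some $[L:\kk]=|\Gal(L/\kk)|$ for a finite Galois $L\supseteq\kk(\gamma)$. If $\Gamma$ is pro-$p$, then every such $|\Gal(L/\kk)|$ is a power of $p$, so every finite subextension has degree a power of $p$, and hence every orbit length is a power of $p$. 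Choosing $p\nmid\Delta$ then guarantees that no orbit has size exactly $\Delta$.

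First I would fix a prime $p$ with $p\nmid\Delta$; this is possible since $\Delta$ has only finitely many prime divisors. Next I would exhibit a concrete field with absolute Galois group the pro-$p$ group $\ZZZ_p$ (the $p$-adic integers). The standard source is the field $\FFF_\ell$ for a prime $\ell\neq p$: its absolute Galois group is $\widehat{\ZZZ}=\prod_q\ZZZ_q$, and the fixed field of the product $\prod_{q\neq p}\ZZZ_q$ inside $\overbar{\FFF_\ell}$ is a field $\kk$ with $\Gal(\overbar\kk/\kk)\cong\ZZZ_p$. Concretely, $\kk$ is the union of the finite fields $\FFF_{\ell^{m}}$ where $m$ ranges over integers coprime to $p$, or equivalently $\kk=\FFF_\ell^{(p')}$, the maximal prime-to-$p$ extension of $\FFF_\ell$. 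Then every finite extension of $\kk$ inside $\overbar\kk$ has $p$-power degree, since the closed subgroups of $\ZZZ_p$ are exactly the $p^n\ZZZ_p$, of index $p^n$.

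I would then verify the three required properties. Perfectness: $\kk$ is an algebraic extension of a finite field, hence perfect (every algebraic extension of a perfect field is perfect, as noted after the definition of perfect field). Infinite degree: $[\overbar\kk:\kk]=|\ZZZ_p|=\infty$, which also secures $[\overbar\kk:\kk]>2$. No orbit of size $\Delta$: by Lemma~\ref{theorem:DegreeOfExtensionEqualssizeOfOrbit} applied to a finite Galois $L/\kk$ containing a given $\gamma$, the orbit length equals $[\kk(\gamma):\kk]$, which divides $[L:\kk]=p^n$ and is therefore itself a power of $p$; since $p\nmid\Delta$, no power of $p$ equals $\Delta$ (indeed the only power of $p$ that could is ruled out because $\Delta$ is not a power of $p$), so no orbit has length $\Delta$.

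The main obstacle is supplying a clean, self-contained justification that the maximal prime-to-$p$ extension of $\FFF_\ell$ really has absolute Galois group $\ZZZ_p$, and in particular that \emph{every} finite subextension has $p$-power degree — not merely the Galois ones. The cleanest route is to invoke that $\Gal(\overbar{\FFF_\ell}/\FFF_\ell)\cong\widehat\ZZZ$ with the Frobenius as topological generator, identify $\kk$ as the fixed field of the unique maximal closed subgroup of $\widehat\ZZZ$ that is isomorphic to $\prod_{q\neq p}\ZZZ_q$, and then use that the closed subgroups of $\ZZZ_p$ form the chain $p^n\ZZZ_p$ of index $p^n$, so by the Galois correspondence the finite subextensions of $\overbar\kk/\kk$ are exactly the $\FFF_{\ell^{m p^n}}$-type fields of $p$-power degree over $\kk$. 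Once this group-theoretic input is in place, the divisibility argument via Lemma~\ref{theorem:DegreeOfExtensionEqualssizeOfOrbit} is immediate, and a short remark noting $p\nmid\Delta\Rightarrow \Delta\neq p^n$ for all $n$ completes the proof.
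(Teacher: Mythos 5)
Your argument is correct in substance, and it takes a genuinely different route from the paper. The paper works in characteristic zero inside $\overbar\QQQ$: it defines $\kk$ as the set of algebraic numbers reachable by towers of splitting fields of degree-$\Delta$ polynomials, so that any $\beta$ with orbit length $\Delta$ would have a degree-$\Delta$ minimal polynomial and hence lie in $\kk$ already (contradiction), and it proves $[\overbar\QQQ:\kk]=\infty$ by a Dirichlet/cyclotomic argument producing Galois extensions $\QQQ(\alpha)/\QQQ$ of prime degree $p>\max\{N,\Delta!\}$ that would be forced into $\kk$. You instead control \emph{all} orbit lengths at once: choosing a prime $p\nmid\Delta$ and taking $\kk$ to be the maximal prime-to-$p$ extension of $\FFF_\ell$ ($\ell\neq p$) gives $\Gal(\overbar\kk/\kk)\cong\ZZZ_p$, whose finite-index closed subgroups are exactly the $p^n\ZZZ_p$; by the Galois correspondence every finite subextension has degree $p^n$, so by Lemma~\ref{theorem:DegreeOfExtensionEqualssizeOfOrbit} every orbit length is a power of $p$ and in particular never $\Delta$. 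Your field is perfect (algebraic over a finite field) and of infinite degree under $\overbar\kk$, as required. This is shorter and in fact stronger than the paper's statement (it excludes all orbit sizes outside $\{p^n\}$, while still having arbitrarily large orbits, which is what the contrast with \cite{LZ19} needs), at the cost of importing the profinite description $\Gal(\overbar{\FFF_\ell}/\FFF_\ell)\cong\widehat{\ZZZ}$ and of leaving characteristic zero; the paper's construction is more elementary and lives inside $\overbar\QQQ$.

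One local error should be fixed, because taken literally it names the wrong field. You twice describe $\kk$ as ``the fixed field of $\prod_{q\neq p}\ZZZ_q$.'' Under the Galois correspondence, the fixed field of a closed subgroup $H\subset\widehat{\ZZZ}$ has absolute Galois group $H$ itself; so the fixed field of $\prod_{q\neq p}\ZZZ_q$ is $\bigcup_n\FFF_{\ell^{p^n}}$, whose absolute Galois group $\prod_{q\neq p}\ZZZ_q$ has finite quotients of every order coprime to $p$ --- that field therefore \emph{does} contain an element with orbit length exactly $\Delta$ (since $p\nmid\Delta$), the opposite of what you want. The field you actually use and correctly describe concretely, $\kk=\bigcup_{\gcd(m,p)=1}\FFF_{\ell^m}$, is the fixed field of the $p$-component $\ZZZ_p\subset\widehat{\ZZZ}$. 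With that correction everything goes through; note also that since $\ZZZ_p$ is abelian, every finite subextension of $\overbar\kk/\kk$ is automatically Galois, which disposes of your worry about non-Galois intermediate fields, and $p\nmid\Delta$ together with $\Delta\geq 2$ indeed rules out $\Delta=p^n$ for every $n\geq 0$.
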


\begin{proof}
  Consider the field extension $\QQQ\subset \kk\subset\overbar\QQQ$, where $\kk$ is the set consisting of elements $a\in\overbar\QQQ$ such that there exists a tower of fields $\QQQ=L_0\subset L_1\subset\cdots\subset L_n\ni a$ such that $L_i/L_{i-1}$ is the splitting field of a polynomial of degree $\Delta$ with coefficients in $L_{i-1}$.

  Indeed, $\kk$ is a (perfect) field: For $a,b\in \kk$ let $L_0\subset L_1\subset \cdots\subset L_n\ni a$ be the tower of fields corresponding to $a$, and let $g_i$ be the polynomials of degree $\Delta$ corresponding to the splitting fields corresponding to $b$ for $i=1,\ldots, m$.
  There exists a tower of fields $L_0\subset\cdots \subset L_{n}\subset L_{n+1}\subset \cdots L_{n+m}$, where $L_{n+i}/L_{n+i-1}$ is the splitting field of $g_i$ (seen as a polynomial with coefficients in $L_{n+i-1}$).
  Since $L_{n+m}$ is a field containing $a$ and $b$, it also contains $a+b$ and $ab$.
  So alos $\kk$ contains $a+b$ and $ab$.
  Therefore, $\kk$ is a field, and it is perfect because its characteristic is zero.

  To prove that $[\overbar\QQQ:\kk]=\infty$, we assume that $[\overbar\QQQ:\kk]=N$ for some $N\in\NNN$.
  Let $p>\max\{N,\Delta!\}$ be a prime number.
  First, we prove that there exists a Galois extension $F/\QQQ$ of degree $p$.
  By Dirichlet's Theorem, one can choose a prime $q\equiv 1\mod p$.
  Let $\QQQ(\mu_q)$ be the cyclotomic extension of $\QQQ$, where $\mu_q$ is a $q^{\mathrm{th}}$ root of unity.
  The Galois group of $\QQQ(\mu_q)/\QQQ$ is the multiplicative group $(\ZZZ/q\ZZZ)^{\times}$, which is cyclic of order $q-1$.
  As $p$ divides $q-1$, there exists a (normal) subgroup $H\subset \Gal(\QQQ(\mu_q)/\QQQ)$ of order $\frac{q-1}{p}$.
  Let $F\subset\QQQ(\mu_q)$ be the field that is fixed by $H$.
  By Galois Theory, the extension $F/\QQQ$ is Galois and of degree $p$ (using that the extension degree of  $\QQQ(\mu_q)/\QQQ$ is $q-1$) \cite[Chapter VI, Theorem 1.1 and 1.8]{lang05}.

  By the Primitive Element Theorem, we can choose $\alpha\in\overbar\QQQ$ such that
  $F= \QQQ(\alpha)$.
  Hence $\QQQ(\alpha)/\QQQ$ is a Galois extension with $[\QQQ(\alpha):\QQQ]=p$.
  We prove that $\alpha\in\kk$.
  As $\kk/\QQQ$ is an (arbitrary) extension, the degree $[\kk(\alpha):\kk]$ divides $[\QQQ(\alpha):\QQQ]=p$ \cite[{Chapter VI, Corollary 1.13}]{lang05} (using that the compositum $\QQQ(\alpha)\kk$ of the two fields $\QQQ(\alpha)$ and $\kk$ equals $\kk(\alpha)$).
  By the transitivity of the degree, we also find that
  \[
    N=[\overbar\QQQ:\kk]=[\overbar\QQQ:\kk(\alpha)][\kk(\alpha):\kk],
  \]
  so $[\kk(\alpha):\kk]$ divides $N$.
  Since it also divides the prime number $p>N$, the only possibility is $[\kk(\alpha):\kk]=1$ and so $\alpha\in\kk$.

  Now, we find a contradiction to $p>\Delta!$.
  As $\alpha$ lies in $\kk$, there exists a tower of fields $L_0\subset\cdots \subset L_n\ni \alpha$ such that $L_i/L_{i-1}$ is the splitting field of a polynomial of degree~$\Delta$.
  Hence, $[L_i:L_{i-1}]\leq \Delta!$.
  So $[L_n:\QQQ]=[L_n:L_{n-1}]\cdots[L_1:\QQQ]$ is a product of numbers smaller than or equal to $\Delta!$.
  Note that $\QQQ(\alpha)\subset L_n$, hence $[L_n:\QQQ]=[L_n:\QQQ(\alpha)][\QQQ(\alpha):\QQQ]$ and so $p=[\QQQ(\alpha):\QQQ]$ divides $[L_n:\QQQ]$.
  As $p$ is a prime, it implies that $p\leq\Delta!$, which is a contradiction to $p>\Delta!$.

  Finally, we prove that $\kk$ has no Galois orbit of size $\Delta$.
  Assume that there exists $\beta\in\overbar\QQQ$ such that $\kk(\beta)/\kk$ is finite and such that the length of the Galois orbit of $\beta$ is $\Delta$.
  In particular, $\beta\in\overbar\QQQ\setminus\kk$.
  Consider the minimal polynomial $\min_\kk(\beta)$ of $\beta$ over $\kk$ and its splitting field $L$.
  So $\beta\in L$ and $L/\kk$ is finite and Galois.
  Hence with Theorem~\ref{theorem:DegreeOfExtensionEqualssizeOfOrbit} we have that the size of the Galois orbit of $\beta$, which is $\Delta$, equals $[\kk(\beta):\kk]$, which in turn is the degree of the minimal polynomial $\min_\kk(\beta)$.
  By the construction of our field $\kk$, this implies that $\beta$ already lies in $\kk$, a contradiction.
\end{proof}

\section{Group homomorphism}\label{section:GroupHomo}

\begin{definition}\label{def:EquivalentConicBundles}
  We say that two Mori conic bundles $X/W$ and $X'/W'$ are \textit{equivalent} if there exists a birational map $X\dashto X'$ that preserves the fibration (see Definition~\ref{definition:PreserveFibrationAndIsoOfMFS}).
\end{definition}

We denote the set of equivalence classes of Mori conic bundles birational to $X$ by $\CB(X)$.

\begin{definition}\label{def:EquivalentSarkisovLinks}
  We say that two Sarkisov links $\chi$ and $\chi'$ of Mori conic bundles of type II are \textit{equivalent}, if \begin{enumerate}
    \item the Mori conic bundles are equivalent,
    \item the Sarkisov links have the same Galois depth.
  \end{enumerate}
\end{definition}

For an equivalence class $C\in\CB(X)$ of Mori conic bundles, we denote by $\Marked(C)$ the set of equivalence classes of Sarkisov links of type II (between Mori conic bundles in the equivalence class of $C$).
That is, an element of $\Marked(C)$ is the class of Sarkisov links of type II between equivalent Mori conic bundles of the same Galois depth.

\begin{proof}[Proof of Theorem~\ref{theorem:GroupHomomorphism}]
  One has to show that the homomorphism is well defined, that is, to show that every relator is mapped onto the identity.

  By construction, relators that consist of Sarkisov links of Galois depth $\leq 15$ are mapped on the identity.

  The trivial relation $\alpha\beta=\gamma$, where $\alpha,\beta,\gamma$ are isomorphisms of Mori fiber spaces, is mapped onto the identity by construction.
  Trivial relations of the form $\alpha\psi^{-1}\phi=\id$ satisfy $\Bas(\phi)=\Bas(\psi)$, hence they have the same Galois depth and are therefore in the same equivalence class of Sarkisov links.
  Hence $\psi$ and $\phi$ have the same image and so the relator is mapped onto the identity.

  Relations of the form $\chi_4\chi_3\chi_2\chi_1=\id$, where $\chi_i$ are Sarkisov links of type II between Mori conic bundles, are such that $\chi_4$ and $\chi_2$, as well as $\chi_3$ and $\chi_1$ have the same Galois depth as in Remark~\ref{remark:RelationOfFourLinks}.
  As they are all links between Mori conic bundles of the same equivalence class of Mori conic bundles, $\chi_1$ and $\chi_3$, as well as $\chi_2$ and $\chi_4$ have the same image.
  Therefore, the relator is mapped onto the identity.
  This proves the existence of the groupoid homomorphism.
  The fact that it restricts to a group homomorphism from $\Bir(X)$ is immediate, and the fact that it restricts to a group homomorphism from $\Bir(X/W)$ is a consequence of Corollary~\ref{corollary:PreservingTheFibrationMeansTypeII}.
\end{proof}

Having established Theorem~\ref{theorem:GroupHomomorphism}, it is now not hard to prove Theorem~\ref{theorem:CremonaNotSimple}:
We take the restriction of the group homomorphism to the equivalence class of $\PPP^1\times\PPP^1$ (the Hirzebruch surfaces), and construct links of type II of large Galois depth.

\begin{example}\label{example:P1P1NonTrivialImage}
  Consider the birational map
  \[
    (x,y)\mapsto (xp(y),y),
  \]
  and its extension to a birational map  $\varphi\colon\PPP^1\times\PPP^1\dashto\PPP^1\times\PPP^1$ that is given by
  \[
    [x_0:x_1;y_0:y_1]\mapsto [x_0y_1^d:x_1p(y_0,y_1); y_0:y_1],
  \]
  where $p\in\kk[y_0,y_1]$ is an irreducible polynomial of degree $d\geq 16$.
  Since $\kk$ is perfect, $p(t,1)$ has $d$ different zeroes $t_1,\ldots,t_d\in\overbar\kk$.
  So $\varphi$ is not defined on $p=[1:0;1:0]$ and on the points $p_i=[0:1;t_i:1]$ for $i=1,\ldots,d$.
  We check that $\varphi$ is the composition of a link $\varphi_0\colon\PPP^1\times\PPP^1\dashto \FFF_d$ of type II centered at the orbit $\{p_1,\ldots,p_d\}$, followed by $d$ links $\varphi_n\colon \FFF_n\dashto\FFF_{n-1}$ of type II of Galois depth $1$ for $n=d,\ldots,1$:

  To resolve $\varphi$ we need to resolve the linear system of bidegree $(1,d)$ given by $[x_0y_1^d:x_1p(y_0,y_1)]$.
  It consists of smooth curves and has self-intersection $2d$, so we need to blow up $2d$ points with multiplicity $1$.
  With $\varphi_0$ and a link $\varphi_d\colon \FFF_d\dashto\FFF_{d-1}$ of type II centered at the image of $p$ in $\FFF_d$, we have resolved the $d+1$ base points that we see from the equation.
  Hence, the remaining $d-1$ base points are infinitely near to them.
  As the exceptional divisors of the $p_i$ form an orbit of size $d$, the $d-1$ base points are infinitely near to $p$.
  Note that $p$ lies on the section $s$ given by $x_1=0$, which is mapped onto itself by $\varphi$, and that the $p_i$ do not lie on $s$.
  The image of $s$ in $\FFF_{d-1}$ has self-intersection $d-1$ and so the remaining $d-1$ base points lie on the strict transform of $s$, each of them giving a link $\varphi_n\colon \FFF_n\dashto\FFF_{n-1}$ of type II of Galois depth $1$, for $n=d-1,\ldots,1$.

  Note that $\varphi_0$ is not mapped onto the identity (its image is $1\in\ZZZ/2\ZZZ$ corresponding to the equivalence class of $\varphi_0$ in $\Marked(\PPP^1\times\PPP^1)$), whereas all $\varphi_n$ for $n=1,\ldots,d$ are mapped onto the identity.
  Therefore, the image of $\varphi\in\Bir(\PPP^1\times\PPP^1)$ under the group homomorphism is non-trivial.
\end{example}

\begin{proof}[Proof of Theorem~{\ref{theorem:CremonaNotSimple}}]
  We take the group homomorphism from Theorem~\ref{theorem:GroupHomomorphism}.
  For a constant polynomial $p\in \kk$, the local map $(x,y)\mapsto(px,y)$ is an automorphism and therefore it is mapped onto the identity.
  For the surjectivity, using Lemma~\ref{lemma:LargeGaloisOrbitExists} we can construct an infinite and countable indexing set $I$ such that for each $d\in I$ there exists an irreducible polynomial $p\in\kk[y]$ of degree $d$, and each $d\in I$ is at least $16$.
  For each such polynomial we consider $\varphi\colon\PPP^1\times\PPP^1\to\PPP^1\times\PPP^1$ as in  Example~\ref{example:P1P1NonTrivialImage}.
  Let $\alpha\colon\PPP^2\dashto\PPP^1\times\PPP^1$ be the blow-up of two points in $\PPP^2$ that are defined over $\kk$, followed by the contraction of the line connecting the two points.
  Then $\alpha^{-1}\varphi\alpha$ lies in $\Bir(\PPP^2)$ and, since $\alpha$ and $\alpha^{-1}$ have Galois depth $1$, its image under the group homomorphism of Theorem~\ref{theorem:GroupHomomorphism} is non-trivial on the index of $I$ corresponding to the degree of $p$.
\end{proof}

\section{Rational Mori conic bundles}\label{section:RationalMoriCB}

In this section, we are interested in rational surfaces -- especially (Mori) conic bundles -- over an arbitrary perfect field $\kk$.
In Proposition~\ref{proposition:SameFourPoints} we find a way to distinguish equivalence classes of Mori conic bundles, and in Lemma~\ref{lemma:AllMoriConicBundles} we find that rational Mori conic bundles are either a Hirzebruch surface, or given by the blow-up of an orbit of size $4$ in $\PPP^2$, or given by the blow-up of two orbits of size $2$ in $\PPP^2$, followed by the contraction of the strict transform of the line between two of them.
Building upon that we remark in Corollary~\ref{corollary:GeneratingSets} that $\Bir_\kk(\PPP^2)$ is generated by the Jonqui\`eres maps, the sets of birational maps that preserve the pencil of conics through an orbit of size $4$ or two orbits of size $2$, and the set of birational maps with Galois depth at most $8$.
Finally, we prove Theorem~\ref{theorem:Refinement} (the refinement of Theorem~\ref{theorem:CremonaNotSimple}) in Section~\ref{section:freeproduct}, and give a visualization of the long list of Sarkisov links of \cite[Theorem 2.6]{Iskovskikh96} in Section~\ref{section:longlist}.

\subsection{Geography}\label{section:geography}

\begin{remark}\label{remark:geography}
  The list of Sarkisov links in \cite[Theorem 2.6]{Iskovskikh96} -- see Section~\ref{section:longlist} for a quick overview -- implies that a rational Mori conic bundle $X\to\PPP^1$ satisfies either $K_X^2=8$ and $X$ is a Hirzebruch surface, or $K_X^2\in\{5,6\}$.
\end{remark}

To study the rational Mori conic bundles with $K_X^2\in\{5,6\}$ simultaneously, we are going to take a closer look at the blow-up of $\PPP^2$ at four points.

\begin{remark}\label{remark:BlowUpOf4Pts}
  Let $p_1,p_2,p_3, p_4\in\PPP^2(\bar\kk)$ be four points, no three of them collinear, and let $\pi\colon X\to\PPP^2$ be the blow-up centered at the four points (defined over $\bar\kk$).
  There is a morphism $X\to\PPP^1$ whose fibers are the strict-transforms of the conics of $\PPP^2$ passing through the four points.
  The morphism has three singular fibers, namely the strict transforms of conics consisting of two lines each through two of the points.

  Assuming that the set $\{p_1,\ldots,p_4\}$ is invariant under the action of $\Gal(\bar\kk/\kk)$, then the morphisms $\pi\colon X\to\PPP^2$ and $X\to\PPP^1$ are defined over $\kk$, and the union of the three singular fibers is invariant under $\Gal(\bar\kk/\kk)$.
\end{remark}

\begin{definition}
  Let $X\to\PPP^1$ be a conic bundle.
  We call a section (defined over $\bar\kk$) that is a $(-1)$-curve a \textit{$(-1)$-section}.
  A $(-1)$-curve that is an irreducible component of a singular fiber will be called a \textit{vertical $(-1)$-curve}.
\end{definition}

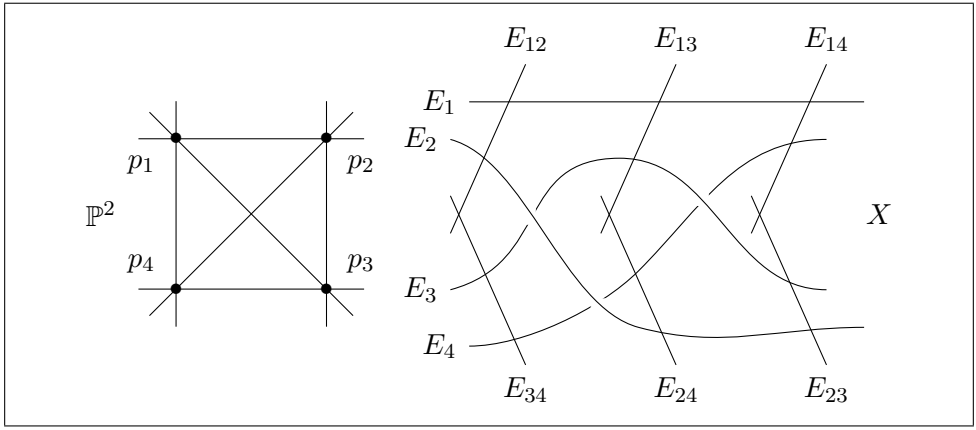
\begin{figure}\label{figure:tenminusonecurves}
  \caption{The situation of {Lemma~\ref{lemma:BlowUpOfFourPointsDescription}}.
  On the left: The three singular conics through $p_1$, $p_2$, $p_3$ and $p_4$ of~\ref{item:BlowUpOfFourPointsDescription--Blowup}.
  On the right: The ten $(-1)$-curves as in \ref{item:BlowUpOfFourPointsDescription--ConfigurationOfX}.}
  \begin{center}
    \begin{tikzpicture}[]
      \node [label={[label distance=-0.1cm]210:$p_1$}] (0) at (-1, 1) {$\bullet$};
  		\node [label={[label distance=-0.1cm]-30:$p_2$}] (1) at (1, 1) {$\bullet$};
  		\node [label={[label distance=-0.1cm]30:$p_3$}] (2) at (1, -1) {$\bullet$};
  		\node [label={[label distance=-0.1cm]150:$p_4$}] (3) at (-1, -1) {$\bullet$};
      \node [] (base) at (0, -2.5) {};
      \node () at (-2,0) {$\PPP^2$};
      \draw [shorten >=-0.5cm, shorten <= -0.5cm] (0.center) to (3.center);
  		\draw [shorten >=-0.5cm, shorten <= -0.5cm] (2.center) to (1.center);
  		\draw [shorten >=-0.5cm, shorten <= -0.5cm] (0.center) to (2.center);
  		\draw [shorten >=-0.5cm, shorten <= -0.5cm] (1.center) to (3.center);
  		\draw [shorten >=-0.5cm, shorten <= -0.5cm] (2.center) to (3.center);
  		\draw [shorten >=-0.5cm, shorten <= -0.5cm] (0.center) to (1.center);
    \end{tikzpicture}%
    ~~~~~~
    \begin{tikzpicture}
      \node () at (2.7,0) {$X$};
      \node [label={[label distance=-0.1cm]90:$E_{12}$}] (0) at (-2, 2) {};
  		\node [label={[label distance=-0.1cm]90:$E_{13}$}] (1) at (0, 2) {};
  		\node [label={[label distance=-0.1cm]90:$E_{14}$}] (2) at (2, 2) {};
  		\node [] (4) at (-3, -0.25) {};
  		\node  (5) at (-1, -0.25) {};
  		\node  (6) at (1, -0.25) {};
  		\node  (8) at (-3, 0.25) {};
  		\node (9) at (-1, 0.25) {};
  		\node  (10) at (1, 0.25) {};
  		\node [label={[label distance=-0.1cm]270:$E_{34}$}] (12) at (-2, -2) {};
  		\node [label={[label distance=-0.1cm]270:$E_{24}$}] (13) at (0, -2) {};
  		\node [label={[label distance=-0.1cm]270:$E_{23}$}] (14) at (2, -2) {};
  		\node [label={[label distance=-0.1cm]180:$E_1$}] (15) at (-2.75, 1.5) {}; 
  		\node [label={[label distance=-0.1cm]180:$E_2$}] (16) at (-3, 1) {};
  		\node [label={[label distance=-0.1cm]180:$E_3$}](17) at (-3, -1) {};
  		\node [label={[label distance=-0.1cm]180:$E_4$}](18) at (-2.75, -1.75) {};
  		\node (19) at (2.5, 1.5) {};
  		\node (20) at (2, 1) {};
  		\node (21) at (2.5, -1.5) {};
  		\node (22) at (2, -1) {};
  		\node (24) at (-0.75, 0.75) {};
  		\node (25) at (-0.5, -1.5) {};
  		\node (26) at (-0.5, -0.75) {};

      \draw (18.center)
  			 to [in=-135, out=0, looseness=0.75] (26.center)
  			 to [in=-180, out=45] (20.center);
  		\draw (15.center) to (19.center);
      \node [fill=white, shape=rectangle,draw=white] (29) at (0.3, 0.25) {};

      \draw (17.center)
  			 to [in=180, out=15, looseness=1.25] (24.center)
  			 to [in=-180, out=0] (22.center); 

      \node [fill=white, shape=rectangle,draw=white] (27) at (-2, 0) {};
      \node [fill=white, shape=rectangle,draw=white] (28) at (-1, -1.25) {};

  		\draw (16.center)
  			 to [in=165, out=-15, looseness=0.75] (25.center)
  			 to [in=-180, out=-15] (21.center);
  		\draw (0.center) to (4.center);
  		\draw (8.center) to (12.center);
  		\draw (1.center) to (5.center);
  		\draw (9.center) to (13.center);
  		\draw (10.center) to (14.center);
  		\draw (6.center) to (2.center);
    \end{tikzpicture}
  \end{center}
\end{figure}

\begin{lemma}\label{lemma:BlowUpOfFourPointsDescription}
  Let $X\to\PPP^1$ be a conic bundle with a $\kk$-point in $X$.
  Then the following are equivalent:
  \begin{enumerate}
    \item\label{item:BlowUpOfFourPointsDescription--Blowup} There exists a morphism $X\to\PPP^2$ that is the blow-up of four points $p_1,\ldots,p_4\in\PPP^2(\kk)$, no three collinear, and the set of the four points is invariant under the action of $\Gal(\bar\kk/\kk)$.
    \item\label{item:BlowUpOfFourPointsDescription--ConfigurationOfX} $X$ satisfies $K_X^2=5$ and it contains ten $(-1)$-curves $E_{ij}$, $1\leq i\leq j\leq 4$, whose union is invariant under the action of $\Gal(\bar\kk/\kk)$ such that (see also Figure~\ref{figure:tenminusonecurves})
    \begin{enumerate}
      \item each $E_i:=E_{ii}$ for $i=1,\ldots,4$ is a $(-1)$-section, and
      \item $E_{12}+E_{34}$, $E_{13}+E_{24}$, and $E_{14}+E_{23}$ are three singular fibers,
      \item\label{item:BlowUpOfFourPointsDescription--Intersection} $E_k$ intersects $E_{ij}$ if and only if $k\in\{i,j\}$.
    \end{enumerate}
  \end{enumerate}
  In this case, $X$ is a del Pezzo surface of degree $5$.
\end{lemma}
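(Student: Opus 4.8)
The plan is to prove the two implications separately, in each case first working over $\bar\kk$ and then using Galois-invariance to descend to $\kk$; since the equivalence will exhibit $X$ as the blow-up of $\PPP^2$ at four points, the final del Pezzo claim comes for free. Throughout I identify the ten curves with their classes in $\Pic(X_{\bar\kk})$ and write $f$ for the class of a fiber of $X\to\PPP^1$.

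For \ref{item:BlowUpOfFourPointsDescription--Blowup}$\Rightarrow$\ref{item:BlowUpOfFourPointsDescription--ConfigurationOfX} the work is bookkeeping. If $\pi\colon X\to\PPP^2$ is the blow-up of $p_1,\dots,p_4$ with no three collinear, then $K_X^2=9-4=5$, the exceptional divisors $E_i$ and the strict transforms $E_{ij}$ of the six lines $\overline{p_ip_j}$ are $(-1)$-curves, and all their intersection numbers are read off in the basis $H,E_1,\dots,E_4$ with $E_{ij}=H-E_i-E_j$. The conic-bundle structure of Remark~\ref{remark:BlowUpOf4Pts} has the strict transforms of the conics through the four points as fibers; the three reducible conics give the singular fibers $E_{12}+E_{34}$, $E_{13}+E_{24}$, $E_{14}+E_{23}$, and each $E_i$ meets every fiber once, hence is a $(-1)$-section. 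Properties (a)--(c) follow, and since $\Gal(\bar\kk/\kk)$ permutes the $p_i$ it permutes the $E_i$ and the $E_{ij}$, so the union of the ten curves is Galois-invariant.

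For \ref{item:BlowUpOfFourPointsDescription--ConfigurationOfX}$\Rightarrow$\ref{item:BlowUpOfFourPointsDescription--Blowup} I would contract the four $(-1)$-sections, and the main obstacle is to show that $E_1,\dots,E_4$ are \emph{pairwise disjoint}. By Lemma~\ref{lemma:GeometricallyRationalIffGenus0} the surface $X$ is geometrically rational, and by Remark~\ref{remark:ConicBundleOverP1} it has $8-K_X^2=3$ singular fibers, so contracting one component of each exhibits $\Pic(X_{\bar\kk})$ as having rank $5$. Using that each $E_i$ is a section and that all three singular fibers have class $f$, I first fix the intersections among the vertical curves: from $E_{24}=f-E_{13}$ one gets $E_{12}\cdot E_{24}=-E_{12}\cdot E_{13}$, and as both sides are intersection numbers of distinct irreducible curves they vanish; running this over all pairs gives $E_{ij}\cdot E_{kl}=1$ exactly when $\{i,j\},\{k,l\}$ partition $\{1,2,3,4\}$ and $0$ otherwise, while (c) together with $E_i\cdot f=1$ gives $E_i\cdot E_{ij}=1$. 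With these numbers the set $\{f,E_{12},E_{13},E_{14},E_1\}$ has Gram determinant $\pm1$, hence is a basis of $\Pic(X_{\bar\kk})$; expressing $-K_X$ and then each $E_j$ in this basis (using $-K_X\cdot E_j=1$, which follows from adjunction since $E_j$ is a $(-1)$-curve) forces $E_i\cdot E_j=0$ for $i\neq j$.

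Once disjointness holds, Castelnuovo's criterion contracts $E_1,\dots,E_4$ over $\bar\kk$ to a smooth surface $S$ with $K_S^2=5+4=9$ and Picard rank $1$; being rational and admitting no $(-1)$-curve (a further contraction would force Picard rank $0$), $S$ is minimal, hence $S\cong\PPP^2$. Writing $E_{ij}=dH-\sum_k m_kE_k$ and reading off $m_k=E_{ij}\cdot E_k$ from (c) forces $d=1$ and $E_{ij}=H-E_i-E_j$; its irreducibility then rules out a third point on $\overline{p_ip_j}$, since otherwise $H-E_i-E_j$ would decompose as (strict transform of the line)$+E_l$. Thus no three $p_i$ are collinear. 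Finally the four $(-1)$-sections are intrinsically distinguished among the ten curves (the $E_{ij}$ with $i<j$ are vertical), so $\Gal(\bar\kk/\kk)$ permutes $\{E_1,\dots,E_4\}$ and the contraction descends to a morphism over $\kk$ onto a Severi--Brauer surface, which carries the image of the given $\kk$-point of $X$ and is therefore $\PPP^2_\kk$; the centres form the Galois-stable set $\{p_1,\dots,p_4\}$. That $X$ is del Pezzo of degree $5$ then follows exactly as in the proof of Lemma~\ref{lemma:TypeIIIDelPezzo}: $-K_X$ is positive on the ten $(-1)$-curves, which span $\NE(X_{\bar\kk})$, so Kleiman's criterion applies.
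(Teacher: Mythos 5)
Your proposal is correct and follows essentially the same route as the paper: the forward direction is the same blow-up bookkeeping (with $E_{ij}=H-E_i-E_j$ and the three reducible conics as singular fibers), and for the converse both you and the paper contract the Galois-invariant set of four $(-1)$-sections to a surface with $K^2=9$, use the $\kk$-point to identify it with $\PPP^2_\kk$, and rule out three collinear points via $E_{ij}^2=-1$. The only real difference is your Picard-lattice computation proving the sections are pairwise disjoint before contracting; the paper silently absorbs this into hypothesis (c), whose range $1\leq i\leq j\leq 4$ includes $i=j$ and hence asserts $E_k\cap E_i=\emptyset$ for $k\neq i$, so your derivation is sound and in fact shows that part of the hypothesis to be redundant.
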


\begin{proof}
  Let $X\to\PPP^2$ be the blow-up of the four points  from~\ref{item:BlowUpOfFourPointsDescription--Blowup}.
  Since no three of the points are collinear, $X$ is a del Pezzo surface of degree $5$.
  For $i\neq j$ let $E_{ij}$ be the strict transform of the line through $p_i$ and $p_j$, and let $E_i$ be the exceptional divisor of $p_i$, for $i=1,\ldots,4$.
  As in Remark~\ref{remark:BlowUpOf4Pts}, there is a fibration $X\to\PPP^1$ where the fibers correspond to conics through $p_1,\ldots,p_4$.
  So the only singular fibers are $E_{ij}+E_{kl}$, where $i,j,k,l$ are pairwise distinct. These are exactly three.
  The intersection of the $E_i$ and $E_{ij}$ are as in~\ref{item:BlowUpOfFourPointsDescription--Intersection}.

  For the converse direction, let $X$ be as in~\ref{item:BlowUpOfFourPointsDescription--ConfigurationOfX}.
  As the union of the ten $(-1)$-curves is invariant under the action of $\Gal(\bar\kk/\kk)$, and the morphism $X\to\PPP^1$ is defined over $\kk$, so the Galois action maps fibers onto fibers, and so the union of the other four $(-1)$-curves $E_1,\ldots,E_4$ is again invariant under the Galois action.
  So the contraction $X\to Y$ of $E_1,\ldots,E_4$ onto points $p_1,\ldots,p_4$ is defined over $\kk$.
  As $X$ contains a $\kk$-point and since $K_Y^2=5+4=9$, we have $Y=\PPP^2$.
  The blow-down of the vertical $(-1)$-curves $E_{ij}$ are the lines through the points $p_i$ and $p_j$.
  If three of the four points were collinear, the line through them would have self-intersection $-2$. This is not possible since by assumption, each $E_{ij}$ has self-intersection $-1$.
\end{proof}

Recall that in Remark~\ref{remark:ConicBundleOverP1} we have observed that a conic bundle $X\to\PPP^1$ with $K_X^2=5$ has $3$ singular fibers, and in Remark~\ref{remark:MFSToPointOrCurve} we have seen that if it is moreover a Mori conic bundle, then the two irreducible components of any singular fiber lie in the same Galois orbit, which then implies that there exists $\sigma\in\Gal(\bar\kk/\kk)$ that exchanges the two components.

\begin{lemma}\label{lemma:ThreeSingularFibers}
  Let $X\to\PPP^1$ be a conic bundle with $K_X^2=5$ and a $\kk$-point in $X$.
  If there exists $\sigma_1\in\Gal(\bar \kk/\kk)$ that exchanges the two irreducible components of one of the three singular fibers, then the following hold:
  \begin{enumerate}
    \item\label{item:ThreeSingularFibers--MinimalSelfIntersectionSection} The minimal self-intersection of a section on $X$ is $-1$.
    \item\label{item:ThreeSingularFibers--DelPezzoAndBlowUpOf4} There exists a morphism $X\to\PPP^2$ that is the blow-up at four points, no three collinear, and the set of the four points is invariant under the action of $\Gal(\bar\kk/\kk)$.
    In particular, $X$ is a del Pezzo surface of degree $5$.
    \item\label{item:ThreeSingularFibers--ThereAreFourSections} There are exactly four $(-1)$-sections $E_1,\ldots, E_4$ on $X$.
    Moreover, for $i\neq j$ there is exactly one vertical $(-1)$-curve that intersects $E_i$ and $E_j$, and each vertical $(-1)$-curve meets exactly two of the $E_i$.
    \item\label{item:ThreeSingularFibers--Possibilities} Denote by $E_{12}+E_{34}$, $E_{13}+E_{24}$, $E_{14}+E_{23}$ the three singular fibers of $X$. Up to reordering $E_1,\ldots,E_4$ from~\ref{item:ThreeSingularFibers--ThereAreFourSections}, there are only two possibilities:
    \begin{enumerate}
      \item\label{item:ThreeSingularFibers--Confi}
      $E_i$ intersects $E_{jk}$ if and only if $i\in\{j,k\}$, or
      \item\label{item:ThreeSingularFibers--Confii}
      $E_i$ intersects $E_{jk}$ if and only if $i\notin\{j,k\}$.
    \end{enumerate}
  \end{enumerate}
\end{lemma}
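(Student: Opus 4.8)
The plan is to prove part~\ref{item:ThreeSingularFibers--DelPezzoAndBlowUpOf4} first, because once $X$ is known to be a del Pezzo surface of degree $5$ realised as a blow-up of four points of $\PPP^2$, the remaining statements reduce to the standard combinatorics of the ten $(-1)$-curves on such a surface. Recall from Remark~\ref{remark:ConicBundleOverP1} that $K_X^2=5$ forces exactly three singular fibers, and that over $\overbar\kk$ the surface is obtained from a Hirzebruch surface $\FFF_n$ by blowing up three points on three distinct fibers; the six components of the singular fibers are then the only vertical $(-1)$-curves (a smooth fiber is an irreducible curve of self-intersection $0$). By Lemma~\ref{lemma:GeometricallyRationalIffGenus0} the surface $X$ is geometrically rational, so it carries a section over $\overbar\kk$. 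In view of Lemma~\ref{lemma:BlowUpOfFourPointsDescription} it suffices to produce the ten $(-1)$-curves with the prescribed Galois-invariant configuration, and the heart of the matter is to show that $X$ has no section of self-intersection $\le -2$; equivalently that the minimal self-intersection of a section equals $-1$, which is part~\ref{item:ThreeSingularFibers--MinimalSelfIntersectionSection}.

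The key step exploits the hypothesis on $\sigma_1$. Let $F=A+B$ be the singular fiber whose components are swapped by $\sigma_1$, and suppose for contradiction that some section had self-intersection $\le -2$. Among all sections choose those of minimal self-intersection; this set is stable under $\Gal(\overbar\kk/\kk)$, since the Galois action preserves the fibration and self-intersection numbers. Let $\Sigma$ be the sum of these $t$ sections: it is a Galois-invariant divisor, so $\sigma_1$ fixes its class, and since $\sigma_1(A)=B$ we get $\Sigma\cdot A=\Sigma\cdot B$. As each section meets a fiber once, $\Sigma\cdot A+\Sigma\cdot B=\Sigma\cdot f=t$, whence $\Sigma\cdot A=t/2$. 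When $t$ is odd, and in particular when the minimal section is unique (it is then the strict transform of the unique negative section of $\FFF_n$ and hence Galois-invariant), this is not an integer, a contradiction. Thus the minimal self-intersection of a section is $\ge-1$; for a conic bundle with $K_X^2=5$ and a $\kk$-point this is exactly the condition making $-K_X$ ample, so $X$ is del Pezzo of degree $5$, proving part~\ref{item:ThreeSingularFibers--DelPezzoAndBlowUpOf4}, and the four $(-1)$-sections that such a surface carries show the minimum $-1$ is attained, giving part~\ref{item:ThreeSingularFibers--MinimalSelfIntersectionSection}.

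For parts~\ref{item:ThreeSingularFibers--ThereAreFourSections} and~\ref{item:ThreeSingularFibers--Possibilities} I would invoke the classical description of a degree-$5$ del Pezzo surface, which over $\overbar\kk$ has exactly ten $(-1)$-curves. Six of these are vertical (two per singular fiber), so the remaining four are precisely the $(-1)$-sections $E_1,\dots,E_4$, proving part~\ref{item:ThreeSingularFibers--ThereAreFourSections}. Since each $E_i$ meets every fiber once, it meets exactly one component of each singular fiber, yielding an incidence between the four sections and the six vertical curves in which every vertical curve meets exactly two sections and each of the $\binom 42=6$ pairs of sections is joined by exactly one vertical curve (the matching is forced, both sets having cardinality $6$). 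For part~\ref{item:ThreeSingularFibers--Possibilities}, after naming the sections and writing the fibers as $E_{12}+E_{34}$, $E_{13}+E_{24}$, $E_{14}+E_{23}$, the only remaining freedom is which component of each fiber carries which label; the relations $E_{12}+E_{34}=E_{13}+E_{24}=E_{14}+E_{23}=f$ in $\Pic(X_{\overbar\kk})$ together with $E_i\cdot E_j=0$ for $i\neq j$ force the incidence to be one of the two global patterns~\ref{item:ThreeSingularFibers--Confi} and~\ref{item:ThreeSingularFibers--Confii}, which are exchanged by relabelling the components within the fibers and cannot be mixed.

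The main obstacle is the exclusion of low sections in the even-cardinality case of the key step: the Hirzebruch contraction $X_{\overbar\kk}\to\FFF_n$ need not be Galois-equivariant — indeed $\sigma_1$ swaps the two components of $F$, so the strict transform of the negative section of $\FFF_n$ need not itself be Galois-invariant — and a priori several sections of minimal self-intersection could occur in conjugate pairs selecting opposite components of $F$, making $t$ even and defeating the parity argument. Resolving this requires a closer analysis: one checks that such extra minimal sections must lie in the class $C_0+f$ passing through at least two of the three blown-up points and bounds their number, or one argues via the Hodge index theorem that a Galois-stable family of sections of self-intersection $\le -2$ together with the fiber class would contradict the signature of the intersection form. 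Once $X$ is established to be del Pezzo of degree $5$, parts~\ref{item:ThreeSingularFibers--ThereAreFourSections} and~\ref{item:ThreeSingularFibers--Possibilities} are pure bookkeeping with the incidence graph of its ten $(-1)$-curves.
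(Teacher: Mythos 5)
Your proof of part~(1) has a genuine gap, and it sits at the heart of the lemma. Your parity argument ($\Sigma\cdot A=t/2$, contradiction for $t$ odd) is correct as far as it goes, but the even case is not a technical loose end to be patched later: $t=2$, with two minimal sections swapped by $\sigma_1$ and meeting opposite components of $F$, is precisely the configuration the hypothesis produces, so your argument fails exactly where the content is. Neither of your two fallback suggestions closes it. The Hodge index theorem gives nothing here: for two sections $s_1,s_2$ of self-intersection $-n$ the difference $D=s_1-s_2$ satisfies $D\cdot f=0$ for the fiber class $f$ with $f^2=0$, and the index theorem only asserts $D^2\leq 0$ for classes orthogonal to a nef class of square zero --- which is consistent with $D^2=-2n-2\,s_1\cdot s_2<0$, so there is no contradiction with the signature. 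The suggestion that extra minimal sections lie in a class $C_0+f$ and can be counted is not carried out, and bounding their number would not restore the parity you need. What actually closes the even case is a quantitative statement you never invoke: by \cite[Lemma 3.3]{Blanc09}, the existence of \emph{two distinct} sections of minimal self-intersection $-n$ forces the conic bundle to have at least $2n$ singular fibers. This is how the paper argues: a minimal section $E_1$ meets exactly one component of the swapped fiber, so $E_2=\sigma_1(E_1)$ meets the other component and is a distinct section of the same self-intersection $-n$; then $3\geq 2n$ forces $n=1$. No parity, and no Galois-invariant divisor $\Sigma$, is needed.

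Two further points are weaker than you present them, though they are repairable. First, your step from ``minimal section $\geq -1$'' to ``$-K_X$ ample, hence del Pezzo'' is asserted, not proved, and part~(2) also requires Galois-invariance of the set of four blown-up points; the paper instead contracts, over $\overbar\kk$, the three vertical $(-1)$-curves not meeting a fixed $(-1)$-section $E_1$ together with $E_1$ itself, identifies the target as $\PPP^2$ using the $\kk$-point and $K_Y^2=9$, checks that the images $F,G,H$ of the vertical curves are not collinear (a line through all three would lift to a section of self-intersection $-2$, excluded by part~(1)), and then feeds the resulting ten-curve configuration into Lemma~\ref{lemma:BlowUpOfFourPointsDescription}, which supplies both the del Pezzo property and the Galois-invariance. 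Second, your treatment of parts~(3) and~(4) as ``pure bookkeeping'' is directionally right --- the paper likewise counts the ten $(-1)$-curves via \cite[Section 8.5]{Dolgachev12} and does an explicit incidence case analysis on the three singular fibers --- but the two configurations~\ref{item:ThreeSingularFibers--Confi} and~\ref{item:ThreeSingularFibers--Confii} are not simply ``exchanged by relabelling components within the fibers'': the paper fixes the fiber labels and shows that after normalising the incidences with two of the three fibers there remain exactly two non-isomorphic patterns for the third, one of which is brought to the second configuration by the permutation exchanging $E_1\leftrightarrow E_4$ and $E_2\leftrightarrow E_3$. These parts of your sketch could be completed; part~(1) as written cannot.
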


\begin{proof}
  By \cite[Lemma 3.3]{Blanc09}, the minimal self-intersection of a section is $-n$ for some $n\geq 1$.
  To show that $n=1$, we assume by contradiction that there is a section $E_1$ with self-intersection $-n$ and $n\geq2$.
  As $E_1$ is a section, its intersection with each fiber is $1$, so also with the singular fiber whose irreducible components are exchanged by $\sigma_1$.
  Hence, $E_1$ meets exactly one of the irreducible components.
  So $E_2=\sigma_1(E_1)$ intersects the other irreducible component, and it is again a section with self-intersection $-n$, implying $E_2\neq E_1$.
  Hence there are two distinct sections with self-intersection $-n$.
  \cite[Lemma 3.3]{Blanc09} implies that the number of singular fibers is at least $2n$, giving $3\geq2n\geq4$, a contradiction.
  We have proven \ref{item:ThreeSingularFibers--MinimalSelfIntersectionSection}.

  We now prove~\ref{item:ThreeSingularFibers--DelPezzoAndBlowUpOf4} and~\ref{item:ThreeSingularFibers--ThereAreFourSections}.
  Let $E_1$ be a section with self-intersection $-1$, which exists by~\ref{item:ThreeSingularFibers--MinimalSelfIntersectionSection}.
  Up to exchanging the names of two irreducible components in the same singular fiber, we can assume that $E_1$ meets $E_{12}$, $E_{13}$, and $E_{14}$.
  Let $X\to Y$ be the contraction (only defined over $\bar\kk$) of the $(-1)$-curves $E_{34}$, $E_{24}$, $E_{23}$ and $E_1$ onto points $F$, $G$, and $H$ and $P$.
  As $X$ contains a $\kk$-point, also $Y$ has one, so $K_Y^2=K_X^2+4=9$ implies $Y=\PPP^2$.
  Note that the image of the fibers are lines through the point $P$.
  So $F$, $G$, and $H$ are not collinear because a line through all of them would be a section of self-intersection $-2$ in $X$, which is not possible by \ref{item:ThreeSingularFibers--MinimalSelfIntersectionSection}.
  So let $E_2=L_{GH}$, $E_3=L_{FH}$, and $E_4=L_{FG}$ be the lines through two of the three points $F$, $G$ and $H$.
  On $X$, these correspond to $(-1)$-sections.
  Note that on $X$, $E_2$ (respectively $E_3$, respectively $E_4$) meets $E_{12}$, $E_{24}$ and $E_{23}$ (respectively $E_{34},E_{13},E_{23}$, respectively $E_{34},E_{24},E_{14}$).
  So $X$ is in the situation of Lemma~\ref{lemma:BlowUpOfFourPointsDescription},\ref{item:BlowUpOfFourPointsDescription--ConfigurationOfX}, and so \ref{item:ThreeSingularFibers--DelPezzoAndBlowUpOf4} holds.
  As $X$ is a del Pezzo surface, there are only ten $(-1)$-curves on $X$ \cite[Section 8.5]{Dolgachev12}, and so $E_1,\ldots,E_4$ are the only $(-1)$-sections on $X$.
  So we see that there is exactly one vertical $(-1)$-curve that intersects $E_i$ and $E_j$.
  If it would intersect a third $E_k$, then there is another vertical $(-1)$-curve that contains two of the $(-1)$-sections, a contradiction.
  This proves~\ref{item:ThreeSingularFibers--ThereAreFourSections}.

  Note that in the proof of ~\ref{item:ThreeSingularFibers--ThereAreFourSections} we exchanged the names of the irreducible components of a singular fiber (if needed).
  For~\ref{item:ThreeSingularFibers--Possibilities}, we fix our fibers and look at the possibilities of how $E_1,\ldots,E_4$ intersect them.
  Looking only at $E_{12}+E_{34}$ and $E_{13}+E_{24}$, there is only one possibility (up to exchanging the order of the $E_i$): $E_1$ intersects $E_{12}$ and $E_{13}$, $E_2$ intersects $E_{12}$ and $E_{24}$, $E_3$ intersects $E_{34}$ and $E_{13}$, and $E_4$ intersects $E_{34}$ and $E_{24}$.
  For the intersection with $E_{14}$ and $E_{23}$ there are now two possibilities:
  \begin{itemize}
    \item Either $E_1$ intersects $E_{14}$ (then $E_2$ and $E_3$ intersect $E_{23}$, $E_4$ intersects $E_{14}$), and we get the first configuration, or
    \item $E_1$ intersects $E_{23}$ (then $E_2$ and $E_3$ intersect $E_{14}$, $E_4$ intersects $E_{23}$).
    Now, we exchange the order of the $E_i$ in the following way: We exchange $E_1$ with $E_4$, and $E_2$ with $E_3$. Then we get the second configuration.
  \end{itemize}
\end{proof}

\begin{observation}\label{observation:TransitiveSubgroupsOfSym4}
  Any transitive subgroup of $\Sym_4$ contains an element that exchanges $\{1,2\}$ with $\{3,4\}$ (respectively $\{1,4\}$ with $\{2,3\}$, respectively $\{1,3\}$ with $\{2,4\}$):
  All transitive subgroups of $\Sym_4$ are
 \begin{enumerate}
    \item $\Sym_4$,
    \item $A_4\subset \Sym_4$,
    \item $D_8=\langle(1234),(13)\rangle\subset A_4$,
    \item $V_4=\{\id,(12)(34),(13)(24),(14)(23)\}\subset D_8$, and
    \item $Z_4=\langle (1234)\rangle=\{\id,(1234),(13)(24),(1432)\}\subset D_8$.
  \end{enumerate}
  An exchange of $\{1,3\}$ with $\{2,4\}$ happens in $V_4$ with the permutation $(14)(23)$, in $Z_4$ there is $(1234)$ doing the same.
  The permutation $(13)(24)$ is contained in all subgroups, and it exchanges $\{1,2\}$ with $\{3,4\}$, and $\{1,4\}$ with $\{2,3\}$.
\end{observation}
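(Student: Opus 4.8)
The plan is to reduce the claim to a check on one representative of each conjugacy class of transitive subgroup, using that the property ``$G$ exchanges the blocks of all three pairings'' is invariant under conjugation in $\Sym_4$. Write $\pi_1=\{\{1,2\},\{3,4\}\}$, $\pi_2=\{\{1,3\},\{2,4\}\}$, $\pi_3=\{\{1,4\},\{2,3\}\}$ for the three pairings. First I would record the action of $\Sym_4$ on $\{\pi_1,\pi_2,\pi_3\}$, which is the familiar surjection $\Sym_4\to\Sym_3$ with kernel $V_4$. The facts I need concern individual elements: each double transposition stabilises exactly one $\pi_i$ \emph{with both its blocks fixed} and exchanges the blocks of the other two. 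For instance $(13)(24)$ fixes each block of $\pi_2$ but sends $\{1,2\}$ to $\{3,4\}$ and $\{1,4\}$ to $\{2,3\}$. Dually, a $4$-cycle $c$ exchanges the blocks of the single pairing that $c^2$ fixes block-wise; for example $(1234)$ exchanges $\{1,3\}$ with $\{2,4\}$, and $(1234)^2=(13)(24)$.

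Next I would establish the conjugation-invariance: for $g\in\Sym_4$ the set of pairings whose blocks are exchanged by some element of $gGg^{-1}$ is the $g$-image of the corresponding set for $G$, and this image equals $\{\pi_1,\pi_2,\pi_3\}$ exactly when the original set does. Consequently it suffices to verify the claim for the five listed representatives $\Sym_4$, $A_4$, $D_8$, $V_4$, $Z_4$. Here $V_4$ already works: it contains all three double transpositions and, by the previous paragraph, each $\pi_i$ is exchanged by two of them. The same then follows at once for $A_4$, $D_8$ and $\Sym_4$, each of which contains the normal subgroup $V_4$. The only remaining case is $Z_4=\langle(1234)\rangle$, which does \emph{not} contain $V_4$: here $(1234)^2=(13)(24)$ exchanges $\pi_1$ and $\pi_3$, while the $4$-cycle $(1234)$ exchanges the remaining pairing $\pi_2$.

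These finite verifications carry no real difficulty, so the one step deserving care is the reduction itself: I must make sure the stated property is genuinely conjugation-invariant and that the five groups really do exhaust the conjugacy classes of transitive subgroups of $\Sym_4$. The subtle point is that different conjugates of $Z_4$ contain different double transpositions, so one cannot point to a single fixed element (such as $(13)(24)$) working for all of them; the conjugation-invariance argument is precisely what lets me check only a single representative.

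If I wished to avoid invoking the classification of transitive subgroups, I would instead argue directly by order considerations. A transitive $G$ has $4\mid\abs{G}$, and there are two cases. If $G$ contains a $4$-cycle $c$, then $c$ together with the double transposition $c^2$ exchanges all three pairings, as in the first paragraph. Otherwise $G$ has no element of order $4$, so its Sylow $2$-subgroup is an elementary abelian $2$-group; since $\Sym_4$ contains no elementary abelian subgroup of order $8$, this Sylow subgroup is a Klein four group, and the only transitive (equivalently, regular) Klein four group in $\Sym_4$, as well as the Sylow $2$-subgroup of $A_4$, is the normal $V_4$. Thus $V_4\leq G$ and all three pairings are exchanged directly.
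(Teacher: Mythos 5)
Your proof is correct, and its computational core coincides with the paper's: the observation in the paper is essentially self-proving, consisting of the list of the five subgroups together with the explicit witnesses $(13)(24)$ (present in all five listed groups, realizing the exchanges $\{1,2\}\leftrightarrow\{3,4\}$ and $\{1,4\}\leftrightarrow\{2,3\}$) and $(14)(23)\in V_4$ respectively $(1234)\in Z_4$ (realizing $\{1,3\}\leftrightarrow\{2,4\}$). You add two things the paper does not make explicit. First, your conjugation-invariance reduction, via the action of $\Sym_4$ on the three pairings with kernel $V_4$, is genuinely needed if one reads ``any transitive subgroup'' literally: the paper's list is only complete up to conjugacy, and, as you correctly isolate, a conjugate such as $\langle(1243)\rangle$ contains $(14)(23)$ but not $(13)(24)$, so no single fixed element works for all transitive subgroups; the paper silently absorbs this into the phrase ``up to renumbering the four points'' at the point of use in Observation~\ref{observation:GaloisGroupOfOrbitSize4}, whereas you prove that the renumbering is harmless because the set of pairings whose blocks get exchanged is permuted by conjugation. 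Second, your Sylow-theoretic variant (either $G$ contains a $4$-cycle $c$, and then $c$ together with $c^2$ exchanges all three pairings, or $G$ has exponent-$2$ Sylow $2$-subgroup, which by the regularity argument and the order count must be the normal $V_4$, so $V_4\leq G$) dispenses with the classification of transitive subgroups entirely, at the cost of the standard facts that a transitive abelian group is regular, that a regular Klein four group in $\Sym_4$ consists of the three double transpositions, and that the order-$12$ case is $A_4$. Both routes yield the same conclusion; the paper's bare list is shorter and suffices for its application, while your version is more robust and makes the implicit reduction rigorous. All your element computations check out.
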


\begin{observation}\label{observation:GaloisGroupOfOrbitSize4}
  Let $\PPPP=\{p_1,\ldots, p_4\}\subset\PPP^2(\bar\kk)$ be an orbit of size $4$ of the action of $\Gal(\bar\kk/\kk)$ and let $L_{ij}$ be the line through $p_i$ and $p_j$. As $\Gal(\bar \kk/\kk)$ acts transitively on these four points, $\PPPP$ is defined over a field $L\supset \kk$ with $\Gal(L/\kk)$ a transitive subgroup of $\Sym_4$.
  So up to renumbering the four points $p_1,\ldots,p_4$, the Galois group $\Gal(L/\kk)$ is one of Observation~\ref{observation:TransitiveSubgroupsOfSym4} and it contains a permutation that maps $L_{12}\leftrightarrow L_{34}$ (respectively $L_{14}\leftrightarrow L_{23}$, respectively $L_{13}\leftrightarrow L_{24}$).
\end{observation}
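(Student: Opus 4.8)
The plan is to deduce this directly from Observation~\ref{observation:TransitiveSubgroupsOfSym4}; the only new ingredient is that the Galois action on the four points induces the corresponding permutation of the connecting lines.

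First I would fix the field of definition. Let $L\subset\bar\kk$ be the smallest subfield over which all four points $p_1,\ldots,p_4$ are defined (equivalently, the splitting field over $\kk$ of the minimal polynomial of any one of them). Since $\kk$ is perfect, $L/\kk$ is Galois, and $\Gal(L/\kk)$ permutes $\PPPP$, giving a homomorphism $\Gal(L/\kk)\to\Sym_4$. By the minimality of $L$, an automorphism fixing all four points fixes $L$ and is therefore trivial, so this homomorphism is injective; and since $\PPPP$ is a single Galois orbit the action is transitive, so its image is a transitive subgroup of $\Sym_4$. After renumbering $p_1,\ldots,p_4$ we may assume that $\Gal(L/\kk)$ equals one of the five groups listed in Observation~\ref{observation:TransitiveSubgroupsOfSym4}.

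Next I would record how this action lifts to the lines. Each $\sigma\in\Gal(L/\kk)$ is a field automorphism, so it sends the line $L_{ij}$ through $p_i$ and $p_j$ to the line through $\sigma(p_i)$ and $\sigma(p_j)$; writing $\pi\in\Sym_4$ for the permutation induced by $\sigma$, so that $\sigma(p_i)=p_{\pi(i)}$, this reads $\sigma(L_{ij})=L_{\pi(i)\pi(j)}$. Hence exchanging the unordered index-pair $\{i,j\}$ with $\{k,l\}$ is literally the same as exchanging the lines $L_{ij}$ and $L_{kl}$.

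Finally I would invoke Observation~\ref{observation:TransitiveSubgroupsOfSym4}: the permutation $(13)(24)$ lies in every transitive subgroup of $\Sym_4$ and exchanges $\{1,2\}\leftrightarrow\{3,4\}$ and $\{1,4\}\leftrightarrow\{2,3\}$, while some element of the group exchanges $\{1,3\}\leftrightarrow\{2,4\}$ (for instance $(14)(23)\in V_4$, or $(1234)\in Z_4$). Transporting these through the previous paragraph, $\Gal(L/\kk)$ contains automorphisms realizing each of the three exchanges $L_{12}\leftrightarrow L_{34}$, $L_{14}\leftrightarrow L_{23}$, and $L_{13}\leftrightarrow L_{24}$, which is the assertion. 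I expect essentially no obstacle here, as the statement is just a transcription of Observation~\ref{observation:TransitiveSubgroupsOfSym4} under the identification of index-pairs with lines; the only point deserving a word of care is the faithfulness of the representation $\Gal(L/\kk)\hookrightarrow\Sym_4$, which is exactly why $L$ is taken to be the field generated by the four points.
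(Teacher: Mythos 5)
Your proposal is correct and takes essentially the same route as the paper: the Observation is justified there exactly by combining the list of transitive subgroups in Observation~\ref{observation:TransitiveSubgroupsOfSym4} with the fact that the Galois action on the points induces the corresponding permutation of the lines $L_{ij}$. Your extra care about the faithfulness of $\Gal(L/\kk)\to\Sym_4$, obtained by taking $L$ to be the field generated by the four points, is a detail the paper leaves implicit but is handled correctly.
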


\begin{example}
  Note that $\Gal(L/\kk)$ does not have to be of order $4$:
  Consider the four points $[1:\zeta:\zeta^2]\in\PPP^2(\QQQ)$, where the $\zeta$ are the roots of $x^5-1$ different from $1$.
  Then, $\Gal(\QQQ(\zeta)/\QQQ)$ is the dihedral group $D_8$.
\end{example}

\begin{observation}\label{observation:GaloisGroupOfOrbitSize2}
  Let $p_1,p_3$ and $p_2,p_4$ each be an orbit of size $2$. Then, the action of $\Gal(\bar\kk/\kk)$ on the four points is one of the following: \begin{enumerate}
    \item $\{\id,(13),(24),(13)(24)\}$
    \item $\{\id,(13)(24)\}$.
  \end{enumerate}
  Again, both of these subgroups contain the permutation $(13)(24)$, which maps $L_{12}\leftrightarrow L_{34}$, and $L_{14}\leftrightarrow L_{23}$.
  Moreover, none of these groups contain an element that maps $L_{13}\leftrightarrow L_{24}$.
\end{observation}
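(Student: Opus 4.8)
The plan is to determine the image of $\Gamma=\Gal(\bar\kk/\kk)$ inside $\Sym_4$ (acting on the indices of the four points) purely group-theoretically, exploiting that we are given \emph{two} orbits rather than one.

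First I would observe that, unlike in Observation~\ref{observation:GaloisGroupOfOrbitSize4}, the four points split into two distinct Galois orbits $\{p_1,p_3\}$ and $\{p_2,p_4\}$. Since a Galois orbit is by definition invariant under $\Gamma$, every $\sigma\in\Gamma$ maps $\{p_1,p_3\}$ to itself and $\{p_2,p_4\}$ to itself. Thus the image $G$ of $\Gamma$ in $\Sym_4$ lies in the subgroup preserving each of the two blocks $\{1,3\}$ and $\{2,4\}$ setwise, which is exactly the Klein four group
\[
  H=\{\id,(13),(24),(13)(24)\}\subset\Sym_4.
\]
Here it is crucial that the two blocks cannot be interchanged, which is precisely what rules out the richer list of Observation~\ref{observation:TransitiveSubgroupsOfSym4}.

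Next I would use that each block is an orbit of size exactly $2$: there is some $\sigma\in\Gamma$ swapping $p_1$ and $p_3$, and some $\tau\in\Gamma$ swapping $p_2$ and $p_4$. Hence $G$ contains an element restricting to $(13)$ on $\{1,3\}$ and an element restricting to $(24)$ on $\{2,4\}$. Running through the five subgroups of $H$, only two meet both requirements: the full group $H$ gives case~(1), and $\langle(13)(24)\rangle=\{\id,(13)(24)\}$ gives case~(2), since $(13)(24)$ acts as the swap on each block simultaneously. The trivial subgroup fails both conditions, $\langle(13)\rangle$ fixes $\{2,4\}$ pointwise, and $\langle(24)\rangle$ fixes $\{1,3\}$ pointwise, so all three are excluded.

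Finally, for the statement about lines, I would simply compute the action of $(13)(24)$, which lies in both admissible groups. It sends $p_1\leftrightarrow p_3$ and $p_2\leftrightarrow p_4$, hence $L_{12}\leftrightarrow L_{34}$ and $L_{14}\leftrightarrow L_{23}$. For the non-existence claim, any element of $G\subseteq H$ preserves the set $\{p_1,p_3\}$, so it fixes the line $L_{13}$ and can never send it to $L_{24}$. There is essentially no hard step here: the whole argument is a finite enumeration, and the only point requiring care is the initial observation that two separate orbits force the block stabilizer $H$ rather than a transitive subgroup of $\Sym_4$.
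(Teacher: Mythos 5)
Your proof is correct, and it matches the argument the paper intends: the paper states this as an Observation without proof, and the expected justification is exactly your two-step check that the image of $\Gal(\bar\kk/\kk)$ in $\Sym_4$ must lie in the block stabilizer $\{\id,(13),(24),(13)(24)\}$ of the two orbits, while the orbit condition (size exactly $2$ for each block) rules out the trivial subgroup, $\langle(13)\rangle$, and $\langle(24)\rangle$, leaving precisely the two listed groups. The concluding computations on the lines $L_{ij}$, including that every admissible element fixes $L_{13}$ setwise and hence can never exchange $L_{13}$ with $L_{24}$, are also exactly as the paper uses them later in Proposition~\ref{proposition:SameFourPoints}.
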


\begin{lemma}\label{lemma:IrreducibleComponentsExchanged}
  Let $\varphi\colon X\dashto X'$ be a birational map preserving fibrations $\pi\colon X\to\PPP^1$ and $\pi'\colon X'\to\PPP^1$.
  Assume that each fiber of $\pi'$ contains at most two components.
  Then, for any singular fiber $F=\pi^{-1}(p)$ consisting of two $(-1)$-curves  \textup{(}over $\bar\kk$\textup{)} such that there is $\tau\in\Gal(\bar\kk/\kk)$ that exchanges the irreducible components of $F$, the birational map $\varphi$ is defined on each point of $F$ and it is a local isomorphism at any point of $F$.
\end{lemma}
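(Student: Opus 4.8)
The plan is to run the argument on the minimal resolution of $\varphi$, exploiting the bound on the number of components of the fibers of $\pi'$. Write the minimal resolution as $X \xleftarrow{\eta} S \xrightarrow{\eta'} X'$, where $\eta$ and $\eta'$ are compositions of blow-ups of points and no $(-1)$-curve of $S$ is contracted by both $\eta$ and $\eta'$. Since $\varphi$ preserves the fibration, the morphisms $\pi\eta$ and $\pi'\eta'\colon S\to\PPP^1$ agree up to an automorphism of $\PPP^1$ and hence have the same fibers. Writing $\Phi$ for the fiber over $p$, I then have $\eta(\Phi)=F$ and $\eta'(\Phi)=G$, where $G$ is the corresponding fiber of $\pi'$ and so has at most two components. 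Note also that $\tau$ fixes $p$, since it preserves $F$, so the base loci of $\varphi$ and of $\varphi^{-1}$ over $p$ are permuted by $\tau$.

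The core of the proof is to show that $\eta$ blows up no point over $p$, that is, that $\varphi$ has no base point on $F$. First I would record that the components of $\Phi$ are the strict transforms $\tilde E_1,\tilde E_2$ of the two components of $F$ together with the $\eta$-exceptional curves lying over $F$; by minimality of the resolution, any component of $\Phi$ that $\eta'$ contracts is \emph{not} contracted by $\eta$, and hence must be $\tilde E_1$ or $\tilde E_2$. Now suppose $\eta$ blows up a point over $p$. Since $\Bas(\varphi)$ is a union of Galois orbits and $\tau$ fixes $p$ while interchanging $E_1$ and $E_2$ (and fixing only their common point), any base point over $p$ is either the singular point of $F$ or has its $\tau$-image on the other component; in either case the resulting blow-ups lower both self-intersections, so $\tilde E_1^2\le-2$ and $\tilde E_2^2\le-2$. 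Then neither $\tilde E_1$ nor $\tilde E_2$ is a $(-1)$-curve, so $\eta'$ contracts nothing over $p$; consequently $\eta'$ is an isomorphism near $\Phi$ and $G$ has as many components as $\Phi$, namely at least three. This contradicts the hypothesis on $\pi'$.

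It follows that $\eta$ is an isomorphism over a neighbourhood of $F$, so $\Phi=\tilde E_1+\tilde E_2$ is a pair of $(-1)$-curves and $\varphi$ has no base point on $F$. To finish I would invoke the symmetry once more: the curves contracted by $\eta'$ form a Galois-invariant set, so $\tau$ forces $\eta'$ to contract both of $\tilde E_1,\tilde E_2$ or neither; contracting both would make $G$ zero-dimensional, which is impossible for a fiber. Hence $\eta'$ too is an isomorphism near $\Phi$, and $\varphi=\eta'\eta^{-1}$ is an isomorphism in a neighbourhood of $F$, carrying $F$ isomorphically onto $G$. In particular $\varphi$ is defined at every point of $F$ and is a local isomorphism there, as claimed.

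The step I expect to be the main obstacle is excluding base points on $F$: it is exactly here that all three hypotheses must be combined, namely minimality of the resolution (which confines the curves $\eta'$ may contract to $\tilde E_1,\tilde E_2$), the Galois element $\tau$ (which forces a base point on one component to be mirrored on the other, so that both self-intersections drop at once), and the two-component bound on fibers of $\pi'$ (which turns a nontrivial $\eta$ into the numerical contradiction $|G|\ge 3$). One could alternatively try to decompose $\varphi$ into links of type~II via Corollary~\ref{corollary:PreservingTheFibrationMeansTypeII}, which only modify smooth fibers and send singular fibers to singular fibers, but that route needs $X$ and $X'$ to be Mori conic bundles, whereas the resolution argument works in the generality stated here.
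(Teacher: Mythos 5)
Your argument contains one genuinely false step, and it sits at the pivot of your core contradiction. You claim that ``by minimality of the resolution, any component of $\Phi$ that $\eta'$ contracts is \emph{not} contracted by $\eta$, and hence must be $\tilde E_1$ or $\tilde E_2$.'' Minimality only excludes $(-1)$-curves \emph{of $S$} from being contracted by both $\eta$ and $\eta'$ --- which is exactly the formulation you give one sentence earlier --- and it does not prevent an $\eta$-exceptional curve of self-intersection $\leq -2$ in $S$ from being contracted by $\eta'$ after neighbouring curves have been contracted first. This really happens: resolve $(x,y)\mapsto (x,y/x^{2})$ on $\PPP^1\times\PPP^1$ over the fiber $x=0$. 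The minimal resolution has fiber a chain $\tilde F$, $E_1$, $E_2$ with $\tilde F^2=E_2^2=-1$ and $E_1^{2}=-2$, where $\eta$ contracts $E_2$ then $E_1$, while $\eta'$ contracts $\tilde F$ then $E_1$; so $E_1$ is contracted on both sides without violating minimality. Consequently your confinement of the $\eta'$-contracted components of $\Phi$ to $\{\tilde E_1,\tilde E_2\}$, and with it the inference ``neither $\tilde E_1$ nor $\tilde E_2$ is a $(-1)$-curve, so $\eta'$ contracts nothing over $p$,'' is unjustified as written.

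The gap is local and repairable: you only need the confinement for the \emph{first} curve over $p$ contracted in a factorization of $\eta'$ into blow-downs. All contractions preceding it take place in other fibers, which are disjoint from $\Phi$ and so do not change its self-intersection; hence this first curve is a $(-1)$-curve already in $S$, minimality applies to \emph{it}, and it cannot be $\eta$-exceptional, so it must be $\tilde E_1$ or $\tilde E_2$ --- impossible once both have self-intersection $\leq-2$. With this one-line correction your proof closes, and it then follows essentially the paper's route: minimal resolution, Galois-stability forcing base points and contracted curves to come in $\tau$-pairs, and self-intersection bookkeeping (the paper shows that if $f_1$ is contracted then so is $f_2=\tau(f_1)$, yet after contracting $f_1$ the curve $f_2$ has self-intersection $0$ and cannot be contracted). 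In fact your write-up is more explicit than the paper's at exactly one point: the paper settles the absence of base points with the words ``apply the same reasoning to the inverse,'' whereas your count of components of $G=\eta'(\Phi)$ makes visible where the hypothesis that fibers of $\pi'$ have at most two components is actually used; your closing remark that the type~II decomposition of Corollary~\ref{corollary:PreservingTheFibrationMeansTypeII} would need Mori conic bundles, while the resolution argument works in the stated generality, is also correct.
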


\begin{proof}
  Let $\rho\colon Z\to X$ and $\rho'\colon Z\to X'$ be the minimal resolution of $\varphi$. Hence, $\rho$ and $\rho'$ are blow-ups of orbits.
  Note that $\rho'$ contracts only curves lying in a fiber, since $\varphi$ preserves the fibrations.
  We prove that none of the irreducible components of $F$ are contracted by $\rho'$:
  If an irreducible component of $F$, say $f_1$, is contracted, then also the other one, say $f_2$, is contracted because $\tau(f_1)=f_2$ is a point.
  But after contracting $f_1$, $f_2$ has self-intersection $0\neq -1$ and can thusly not be contracted.

  We can apply the same reasoning to the inverse of $\varphi$, hence $\varphi$ has no base points and is therefore a local isomorphism on the fiber $F$.
\end{proof}

The following lemma is a slight generalisation of the argument in \cite[Lemma 4.1]{ALNZ19}.

\begin{lemma}\label{lemma:AutoDefinedOverk}
  Let $L/\kk$ be a finite Galois extension \textup{(}that is, normal and separable\textup{)}.
  Let $p_1,\ldots, p_4\in\PPP^2(L)$, no three collinear, and $q_1,\ldots,q_4\in\PPP^2(L)$, no three collinear, be points such that the sets $\{p_1,\ldots,p_4\}$ and $\{q_1,\ldots,q_4\}$ are invariant under the Galois action of $\Gal(L/\kk)$.
  Assume that for all $g\in\Gal(L/\kk)$ there exists $\sigma\in\Sym_4$ such that $g(p_i)=p_{\sigma(i)}$, and $g(q_i)=q_{\sigma(i)}$ for $i=1,\ldots,4$.
  Then, there exists $A\in\PGL_3(\kk)$ such that $Ap_i=q_i$ for $i=1,\ldots,4$.
\end{lemma}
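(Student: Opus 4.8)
The plan is to reduce everything to the classical fact that $\PGL_3$ acts simply transitively on ordered quadruples of points of $\PPP^2$ in general position, that is, on projective frames: the hypothesis ``no three collinear'' on $p_1,\dots,p_4$ (and likewise on $q_1,\dots,q_4$) says precisely that any three of the four points are linearly independent, which is the defining condition of a frame in $\PPP^2$. Hence there is a \emph{unique} element $A\in\PGL_3(\bar\kk)$ with $Ap_i=q_i$ for $i=1,\dots,4$. Since all the $p_i$ and $q_i$ lie in $\PPP^2(L)$, the transformation $A$ can be written down by solving a linear system with coefficients in $L$ (normalise representatives so that $p_4$ and $q_4$ are the sums of the chosen representatives of the first three points, and let $A$ send the first frame to the second), so in fact $A\in\PGL_3(L)$.

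The key step is to show that $A$ is invariant under $\Gal(L/\kk)$, and here the shared permutation in the hypothesis is exactly what makes the argument close up. Fix $g\in\Gal(L/\kk)$ and let $\sigma\in\Sym_4$ be the permutation with $g(p_i)=p_{\sigma(i)}$ and $g(q_i)=q_{\sigma(i)}$ for all $i$. Applying $g$ to $Ap_i=q_i$ and using that the Galois action is compatible with the action of $\PGL_3$ on points, I obtain
\[
  g(A)\,p_{\sigma(i)} = g(A)\,g(p_i) = g(Ap_i) = g(q_i) = q_{\sigma(i)}
\]
for every $i$. As $i$ runs over $\{1,\dots,4\}$ the index $\sigma(i)$ runs over the same set, so $g(A)p_j=q_j$ for all $j=1,\dots,4$. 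By the uniqueness established in the first paragraph, this forces $g(A)=A$.

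Finally I would descend. The element $A$ is fixed by every $g\in\Gal(L/\kk)$, and since $L/\kk$ is Galois (so that any $g\in\Gal(\bar\kk/\kk)$ acts on the $L$-defined object $A$ through its restriction to $L$), it is fixed by the whole Galois action and hence defined over $\kk$, i.e.\ $A\in\PGL_3(\kk)$. This last sentence is where the only genuine subtlety lies: the passage ``Galois-invariant implies defined over $\kk$'' for $\PGL_3$ rests on $\PGL_3(\kk)=\PGL_3(\bar\kk)^{\Gal(\bar\kk/\kk)}$, which follows from $\kk$ being the fixed field together with the vanishing of $H^1(\Gal(\bar\kk/\kk),\bar\kk^\times)$ (Hilbert~90) applied to $1\to\bar\kk^\times\to\GL_3\to\PGL_3\to1$; alternatively one avoids cohomology by tracking the normalised representative matrix in $\GL_3(L)$ directly, at the cost of some bookkeeping with the scalars that $g$ introduces on the chosen representatives. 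I expect this descent to be the main point requiring care, the existence/uniqueness of $A$ and the permutation computation being routine.
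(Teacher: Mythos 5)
Your proposal is correct and takes essentially the same route as the paper: both use the simply transitive action of $\PGL_3$ on frames to get a unique $A\in\PGL_3(L)$ with $Ap_i=q_i$, then use the shared permutation $\sigma$ to compute $g(A)p_j=q_j$ for all $j$ and conclude $g(A)=A$ by uniqueness (the paper applies the hypothesis to $g^{-1}$ and writes $g(A)p_i=g(Ag^{-1}(p_i))=q_i$, where you apply $g$ directly -- a purely cosmetic difference). If anything you are more careful than the paper at the end: the paper stops at $g(A)=A$ and leaves the descent $\PGL_3(L)^{\Gal(L/\kk)}=\PGL_3(\kk)$ implicit, whereas you correctly flag it and note it follows either from Hilbert~90 applied to $1\to\bar\kk^\times\to\GL_3\to\PGL_3\to1$ or, more elementarily, by normalising one nonzero entry of a representative matrix to $1$ so that the scalar $c_g$ in $g(\tilde A)=c_g\tilde A$ is forced to equal $1$.
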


\begin{proof}
  As no three of the points $p_i$ are collinear, and the same for $q_i$, there exists a unique $A\in\PGL_3(L)$ with $Ap_i=q_i$ for $i=1,\ldots,4$.
  Let $g\in\Gal(L/\kk)$.
  We will show that $g(A)=A$ (where $g(A)$ denotes the action of $g$ on the entries of $A$).
  By assumption, there exists $\sigma\in\Sym_4$ such that $g^{-1}(p_i)=p_{\sigma(i)}$ and $g^{-1}(q_i)=q_{\sigma(i)}$ for $i=1,\ldots,4$.
  We compute
  \[
    g(A)p_i = g(Ag^{-1}(p_i)) = g(Ap_{\sigma(i)}) = g(q_{\sigma(i)}) = q_i
  \]
  and so $g(A)p_i=q_i=Ap_i$ for $i=1,\ldots,4$.
  By the uniqueness of $A$, this gives $g(A)=A$.
\end{proof}

\begin{proposition}\label{proposition:SameFourPoints}
  Let $\kk$ be any perfect field.
  Let $\PPPP$ and $\QQQQ$ each be a union of four points in $\PPP^2$.
  Let $X_\PPPP\to\PPP^2$ and $X_\QQQQ\to\PPP^2$ be the blow-up centered at $\PPPP$ respectively $\QQQQ$ and let $\varphi\colon X_\PPPP\dasharrow X_\QQQQ$ be a birational map preserving the fibrations $X_\PPPP\to\PPP^1$ and $X_\QQQQ\to\PPP^1$ given by conics through $\PPPP$ respectively $\QQQQ$.
  Assume that one of the following holds: \begin{enumerate}
    \item\label{item:SameFourPoints--size4} $\PPPP$ is an orbit of size $4$, or
    \item\label{item:SameFourPoints--size22} $\PPPP$ consists of two orbits of size $2$.
  \end{enumerate}
  Then, there exists an automorphism $\alpha\in \Aut_\kk(\PPP^2)$ such that $\alpha(\PPPP)=\QQQQ$.
\end{proposition}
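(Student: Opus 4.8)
My plan is to reduce the statement to Lemma~\ref{lemma:AutoDefinedOverk} by exhibiting labellings $\PPPP=\{p_1,\dots,p_4\}$ and $\QQQQ=\{q_1,\dots,q_4\}$ for which the Galois action is given by the \emph{same} permutations. First I record the geometry. In both cases the conic pencils are genuine conic bundles, so no three of the four points are collinear, and hence $X_\PPPP$ and $X_\QQQQ$ are del Pezzo surfaces of degree~$5$; by Lemma~\ref{lemma:BlowUpOfFourPointsDescription} each carries exactly ten $(-1)$-curves --- four $(-1)$-sections $E_1,\dots,E_4$ and six vertical curves $E_{ij}$ for $1\le i<j\le 4$ --- with the intersection pattern of that lemma: the three singular fibres are $E_{12}+E_{34}$, $E_{13}+E_{24}$ and $E_{14}+E_{23}$, and $E_k$ meets $E_{ij}$ exactly when $k\in\{i,j\}$.

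Next I produce a Galois element that exchanges the components of two well-chosen fibres. In case~\ref{item:SameFourPoints--size4} the image of $\Gal(\bar\kk/\kk)$ in $\Sym_4$ is a transitive subgroup (Observation~\ref{observation:GaloisGroupOfOrbitSize4}), and in case~\ref{item:SameFourPoints--size22} it is one of the two groups of Observation~\ref{observation:GaloisGroupOfOrbitSize2}; in either situation the permutation $(13)(24)$ is realised by some $\tau\in\Gal(\bar\kk/\kk)$ (Observation~\ref{observation:TransitiveSubgroupsOfSym4}). This $\tau$ exchanges the two components of $F_1:=E_{12}^{\PPPP}+E_{34}^{\PPPP}$ and of $F_2:=E_{14}^{\PPPP}+E_{23}^{\PPPP}$. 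Applying Lemma~\ref{lemma:IrreducibleComponentsExchanged} to $F_1$ and to $F_2$ (the fibres of $X_\QQQQ\to\PPP^1$ are reduced conics, hence have at most two components), I conclude that $\varphi$ is defined and a local isomorphism at every point of $F_1$ and $F_2$. Thus $\varphi$ carries these two singular fibres isomorphically onto two singular fibres of $X_\QQQQ$, and so induces a bijection of the four vertical curves $E_{12}^{\PPPP},E_{34}^{\PPPP},E_{14}^{\PPPP},E_{23}^{\PPPP}$ onto four vertical $(-1)$-curves of $X_\QQQQ$, whose images I label $E_{12}^{\QQQQ},E_{34}^{\QQQQ},E_{14}^{\QQQQ},E_{23}^{\QQQQ}$ accordingly.

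I then reconstruct the points of $\QQQQ$ intrinsically. Using the Petersen pattern on $X_\QQQQ$ I define $E_i^{\QQQQ}$ to be the unique $(-1)$-section meeting the prescribed pair among the four labelled vertical curves (for instance $E_1^{\QQQQ}$ meets $E_{12}^{\QQQQ}$ and $E_{14}^{\QQQQ}$), and I let $q_i\in\PPP^2$ be the image of $E_i^{\QQQQ}$ under the contraction $X_\QQQQ\to\PPP^2$. The key bookkeeping is Galois equivariance: since $\varphi$ is defined over $\kk$ it commutes with every $g\in\Gal(\bar\kk/\kk)$, so if $g(p_i)=p_{\sigma(i)}$ then $g(E_{ij}^{\PPPP})=E_{\sigma(i)\sigma(j)}^{\PPPP}$ and hence $g(E_{ij}^{\QQQQ})=E_{\sigma(i)\sigma(j)}^{\QQQQ}$; tracing this through the intersection pattern gives $g(E_i^{\QQQQ})=E_{\sigma(i)}^{\QQQQ}$ and therefore $g(q_i)=q_{\sigma(i)}$. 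So $\PPPP$ and $\QQQQ$ are permuted by the \emph{same} $\sigma$ for every $g$, and Lemma~\ref{lemma:AutoDefinedOverk} yields $A\in\PGL_3(\kk)$ with $Ap_i=q_i$; taking $\alpha=A\in\Aut_\kk(\PPP^2)$ gives $\alpha(\PPPP)=\QQQQ$.

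The step I expect to be the main obstacle is the equivariant bookkeeping of the last paragraph: checking that the four curves transported by $\varphi$ from $F_1$ and $F_2$ genuinely pin down all four sections of $X_\QQQQ$, and that the induced permutation action on $\QQQQ$ matches that on $\PPPP$ verbatim. The crucial observation making this work is that \emph{two} of the three singular fibres already determine the entire Petersen configuration, so one never needs $\varphi$ to be a local isomorphism along the third fibre $E_{13}+E_{24}$ --- precisely the fibre that is split with $\kk$-rational components, and hence may fail the hypothesis of Lemma~\ref{lemma:IrreducibleComponentsExchanged}, in case~\ref{item:SameFourPoints--size22}.
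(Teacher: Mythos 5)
Your overall strategy coincides with the paper's: transport vertical $(-1)$-curves through $\varphi$ via Lemma~\ref{lemma:IrreducibleComponentsExchanged}, reconstruct the four $(-1)$-sections of $X_\QQQQ$ and hence the points of $\QQQQ$, and conclude with Lemma~\ref{lemma:AutoDefinedOverk}. But your claimed simplification --- using only the two fibres $E_{12}+E_{34}$ and $E_{14}+E_{23}$ in \emph{both} cases --- has a genuine gap in case~\ref{item:SameFourPoints--size4}. There the Galois image in $\Sym_4$ is an arbitrary transitive subgroup, and for $\Sym_4$ or $A_4$ the set of index pairs $\{12,34,14,23\}$ is \emph{not} stable: for instance $\sigma=(12)$ sends $E_{14}$ to $E_{24}$, a component of the third fibre. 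Hence your four labelled curves $E_{ij}^\QQQQ$ do not form a Galois-invariant configuration, and your key bookkeeping step ``$g(E_{ij}^\QQQQ)=E_{\sigma(i)\sigma(j)}^\QQQQ$'' is not even well defined for such $\sigma$, let alone proved. The stability you need holds exactly when the Galois image lies in the centralizer $D_8$ of $(13)(24)$, which is automatic in case~\ref{item:SameFourPoints--size22} by Observation~\ref{observation:GaloisGroupOfOrbitSize2} --- which is why your argument does close there --- but not in case~\ref{item:SameFourPoints--size4}.

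The problem is compounded by the dichotomy of Lemma~\ref{lemma:ThreeSingularFibers}\ref{item:ThreeSingularFibers--Possibilities}: on $X_\QQQQ$ the sections may meet the vertical curves in pattern (a) ($F_k$ meets $F_{ij}$ iff $k\in\{i,j\}$) or pattern (b) ($k\notin\{i,j\}$), and your proof nowhere rules out (b). Your recipe ``$E_1^\QQQQ$ is the unique section meeting $E_{12}^\QQQQ$ and $E_{14}^\QQQQ$'' does pick out a unique section in either pattern, but in pattern (b) it returns the section naturally labelled $F_3$; that is, your labels are the natural ones twisted by $\pi=(13)(24)$. One then gets $g(q_i)=q_{\pi^{-1}\sigma\pi(i)}$, which equals $q_{\sigma(i)}$ only when $\sigma$ commutes with $\pi$; for Galois image $\Sym_4$ or $A_4$ this fails, so the hypothesis of Lemma~\ref{lemma:AutoDefinedOverk} is not verified. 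The repair is exactly what the paper does: in case~\ref{item:SameFourPoints--size4}, Observation~\ref{observation:TransitiveSubgroupsOfSym4} supplies Galois elements exchanging the components of \emph{all three} singular fibres, so Lemma~\ref{lemma:IrreducibleComponentsExchanged} lets you transport all six vertical curves and set $F_{ij}=\varphi(E_{ij})$ throughout; with this full, genuinely Galois-equivariant labelling, both patterns (a) and (b) are then checked separately to give $\sigma(q_k)=q_{\sigma(k)}$. Your two-fibre shortcut is correct, and needed, only in case~\ref{item:SameFourPoints--size22}, where the third fibre may indeed be contracted; in case~\ref{item:SameFourPoints--size4} the third fibre is both available and necessary.
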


\begin{proof}
  Let $\PPPP=\{p_1,p_2,p_3,p_4\}$ and let $L/\kk$ be a Galois extension such that $\PPPP$ is invariant under its Galois action.
  Let $E_{ij}$ be the line through $p_i$ and $p_j$ for $j\neq i$ and assume that $p_1$ and $p_3$ are in the same orbit.
  With this assumption, Observations~\ref{observation:GaloisGroupOfOrbitSize4} and~\ref{observation:GaloisGroupOfOrbitSize2} yield that -- up to changing the numbering of the $p_i$ -- there exists a Galois action $\sigma_1$ mapping $E_{12}\leftrightarrow E_{34}$ and $E_{14}\leftrightarrow E_{23}$.
  Recall that on $X=X_\PPPP$, the $E_{ij}$ are irreducible components of three singular fibers, so they are vertical $(-1)$-curves.
  We denote the exceptional divisor of $p_i$ by $E_i$.
  Lemma~\ref{lemma:BlowUpOfFourPointsDescription} describes how the $E_k$ and $E_{ij}$ intersect:
  $E_k$ intersects $E_{ij}$ if and only if $k\in\{i,j\}$.\\
  Consider now the image by $\varphi$ of the vertical $(-1)$-curves. \begin{itemize}
    \item In case~\ref{item:SameFourPoints--size4}, all vertical $(-1)$-curves are mapped onto vertical $(-1)$-curves (since we can apply Lemma~\ref{lemma:IrreducibleComponentsExchanged} by Observation~\ref{observation:GaloisGroupOfOrbitSize4}).
    We write $F_{ij}=\varphi(E_{ij})$.
    \item In case~\ref{item:SameFourPoints--size22}, the image of $E_{13}$ or $E_{24}$ might be a point, but the image of the other vertical $(-1)$-curves are again vertical $(-1)$-curves (again we can apply Lemma~\ref{lemma:IrreducibleComponentsExchanged} by Observation~\ref{observation:GaloisGroupOfOrbitSize2}).
    So we set $F_{ij}=\varphi(E_{ij})$ for $(i,j)\notin \{(1,3),(2,4)\}$, and we denote by $F_{13}$ and $F_{24}$ the other two vertical $(-1)$-curves on $X_\QQQQ$ (which are defined over $\kk$).
  \end{itemize}

  So $F_{12}+F_{34}$, $F_{13}+F_{24}$, and $F_{14}+F_{23}$ are the three singular fibers on $X_\QQQQ$.
  Since $\sigma_1$ maps $E_{12}\leftrightarrow E_{34}$, it also maps $F_{12}\leftrightarrow F_{34}$ and Lemma~\ref{lemma:ThreeSingularFibers} implies that there are four $(-1)$-sections $F_1,\ldots, F_4$ that are the exceptional divisors corresponding to the points $q_1,\ldots, q_4$ of $\QQQQ$.
  As the set of the $F_{ij}$ is invariant under the Galois action $\Gal(L/\kk)$, also the set of the $F_i$ is invariant under the action, and so the set of points $q_1,\ldots,q_4$ is defined over $L$.
  As no three of the four points in $\PPPP$ respectively $\QQQQ$ are collinear (Lemma~\ref{lemma:BlowUpOfFourPointsDescription}), we prove that there exists an automorphism defined over $\kk$ that maps $\PPPP$ onto $\QQQQ$ using Lemma~\ref{lemma:AutoDefinedOverk}.
  It is enough to prove that for each $g\in\Gal(L/\kk)$ there exists $\sigma\in\Sym_4$ with $g(p_i)=p_{\sigma(i)}$ and $g(q_i)=q_{\sigma(i)}$.

  Let $\sigma\in\Gal(L/\kk)$.
  We consider $\sigma$ as an element in the symmetric group $\Sym_4$ via $\sigma(p_k)=p_{\sigma(k)}$.
  It remains to show that $\sigma(q_k)=q_{\sigma(k)}$ for $k=1,\ldots,4$.
  For this, we take a close look at the Galois action on the vertical $(-1)$ curves.
  Lemma~\ref{lemma:ThreeSingularFibers} tells us how the $F_i$ intersect the vertical $(-1)$-curves:
  \begin{enumerate*}[label=\rm(\roman*)] 
    \item
    $F_k$ intersects $F_{ij}$ if and only if $k\in\{i,j\}$, or
    \item
    $F_k$ intersects $F_{ij}$ if and only if $k\notin\{i,j\}$.
  \end{enumerate*}

  \begin{itemize}
    \item For case~\ref{item:SameFourPoints--size4}, remark that $\sigma(E_k)=E_{\sigma(k)}$ intersects $\sigma(E_{ij})=E_{\sigma(i)\sigma(j)}$ if and only if $\sigma(k)\in\{\sigma(i),\sigma(j)\}$.
    We distinguish cases i) and ii).
    \begin{enumerate}[label=\rm(\roman*)]
      \item $\sigma(k)\in\{\sigma(i),\sigma(j)\}$ holds if and only if $F_{\sigma(k)}$ intersects $F_{\sigma(i)\sigma(j)}$.
      The action of $\sigma$ onto $q_k$ is determined by the action of the three lines $F_{ij}$ with $k\in\{i,j\}$.
      Hence, $\sigma(q_k)=q_{\sigma(k)}$.
      \item $\sigma(k)\in\{\sigma(i),\sigma(j)\}$ holds if and only if $F_{\sigma(k)}$ does \textit{not} intersect $F_{\sigma(i),\sigma(j)}$.
      The action of $\sigma$ onto $q_k$ is determined by the action of the three lines $F_{ij}$ that do not contain $q_k$ (namely $k\notin\{i,j\}$) and so we have again that $\sigma(q_k)=q_{\sigma(k)}$.
    \end{enumerate}
    \item In case~\ref{item:SameFourPoints--size22}, we have that $E_{13}$ and $E_{24}$ are defined over $\kk$, so the union of the other four vertical $(-1)$-curves is invariant under the Galois action.
    As we have given the names $E_{13}$ and $E_{24}$ to the two vertical $(-1)$-curves that are defined over $\kk$ arbitrarily, cases i) and ii) are the same, up to reordering the $F_k$ (exchange $F_1$ with $F_3$, and $F_2$ with $F_4$).
    So we assume that the intersection properties of i) hold.
    So it is enough to consider the action on $E_{12}$, $E_{34}$, $E_{14}$, $E_{23}$.

    As $q_k$ is the intersection point of the two lines $F_{ki}$ and $F_{kj}$ from the above four lines, the action on $q_k$ is determined by the action on these two lines.
    So $\sigma(q_k)$ is the point of intersection of the lines $F_{\sigma(k)\sigma(i)}$ and $F_{\sigma(k)\sigma(j)}$, which is exactly $q_{\sigma(k)}$.
    Hence, $\sigma(q_k)=q_{\sigma(k)}$.%
  \end{itemize}
\end{proof}

\begin{lemma}\label{lemma:AllMoriConicBundles}
  Let $X\to\PPP^1$ be a rational Mori conic bundle.
  Then $X$ is either a Hirzebruch surface, or
  \begin{enumerate}
    \item\label{item:descriptionC5} $K_X^2=5$ and then there exists a link $X\to\PPP^2$ of type III, given by the blow-up of an orbit of size $4$, or
    \item\label{item:descriptionC6} $K_X^2=6$ and then $X$ is given by the blow-up of an orbit of two points in $\PPP^2$ followed by the contraction of the strict transform of the line through the two points \textup{(}this is a link of type II\textup{)}, followed by the blow-up of an orbit of size $2$.
  \end{enumerate}
  In particular, the fibration in cases~\ref{item:descriptionC5} and~\ref{item:descriptionC6} corresponds to the linear system of conics through the four blown up points.
\end{lemma}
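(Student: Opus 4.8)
The plan is to run the geography of Remark~\ref{remark:geography}, which leaves only $K_X^2\in\{5,6,8\}$, with $K_X^2=8$ exactly the Hirzebruch case. As $X$ is rational it carries a $\kk$-point, so its base is $\PPP^1$ and, by Remark~\ref{remark:MFSToPointOrCurve}, the two components of every singular fibre are exchanged by some element of $\Gamma=\Gal(\overbar\kk/\kk)$. This settles $K_X^2=8$ immediately, and it remains to produce the stated $\PPP^2$-models in the two remaining degrees.

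For $K_X^2=5$ I would check the hypotheses of Lemma~\ref{lemma:ThreeSingularFibers}: there are $8-5=3$ singular fibres (Remark~\ref{remark:ConicBundleOverP1}), hence an element $\sigma_1$ exchanging the components of one of them, and $X$ has a $\kk$-point. The lemma then gives a $\kk$-morphism $X\to\PPP^2$ contracting onto four points, no three collinear, with Galois-stable union. To see that these four points form a single orbit -- so that $X\to\PPP^2$ is a link of type III (Definition~\ref{def:SarkisovLink}) -- I use that $X\to\PPP^1$ is a Mori conic bundle, whence $\rk\Pic_\kk(X)=2$, while $\Pic_\kk(X)=(\Pic_{\overbar\kk}X)^{\Gamma}$ has rank $1$ plus the number of Galois orbits among the four points; so there is exactly one orbit, of size $4$. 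That the fibration is cut out by the conics through these points is Lemma~\ref{lemma:BlowUpOfFourPointsDescription}, giving~\ref{item:descriptionC5}.

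For $K_X^2=6$ the idea is to reduce to the previous case by a single blow-up. First I would pick a $\kk$-point $x_0$ on a smooth fibre (possible since there are only $8-6=2$ singular fibres, $\PPP^1$ has more than two $\kk$-points, and a smooth fibre over a $\kk$-point of $\PPP^1$ is a conic, which has a $\kk$-point over every finite field). Blowing up $x_0$ yields a conic bundle $X_5\to\PPP^1$ with $K_{X_5}^2=5$ and three singular fibres: the two inherited ones, whose components are still exchanged by Galois, and a new fibre $\mathcal E_{x_0}+\tilde f_{x_0}$ both of whose components are defined over $\kk$. Applying Lemma~\ref{lemma:ThreeSingularFibers} to $X_5$ (taking $\sigma_1$ from an inherited fibre) realises $X_5$ as the blow-up of $\PPP^2$ at four points, no three collinear, with Galois-stable union. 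Since $X_5$ is the blow-up of the $\kk$-point $x_0$, we have $\rk\Pic_\kk(X_5)=\rk\Pic_\kk(X)+1=3$, so the four points split into exactly two orbits, of sizes $(2,2)$ or $(1,3)$. I would rule out $(1,3)$: a generating $3$-cycle of the size-$3$ orbit permutes the three singular fibres $E_{12}+E_{34}$, $E_{13}+E_{24}$, $E_{14}+E_{23}$ cyclically, so none is defined over $\kk$, contradicting the $\kk$-fibre $\mathcal E_{x_0}+\tilde f_{x_0}$. Hence the points form two orbits of size $2$, and by Observation~\ref{observation:GaloisGroupOfOrbitSize2} the unique singular fibre defined componentwise over $\kk$ has as components the strict transforms of the two lines joining the points within each orbit; this fibre is $\mathcal E_{x_0}+\tilde f_{x_0}$. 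Thus $\mathcal E_{x_0}$ is the strict transform of the line through one size-$2$ orbit, and contracting it recovers $X$. Reordering the three contractions as ``blow up one orbit of size $2$, contract the line through it (a link of type II), blow up the other orbit of size $2$'' gives~\ref{item:descriptionC6}, with fibration again the conics through the four points.

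The hard part is the degree-$6$ case. The genuinely delicate steps are guaranteeing a $\kk$-point on a smooth fibre over finite fields, and -- above all -- forcing the orbit type to be $(2,2)$ rather than $(1,3)$ and identifying the contracted $(-1)$-curve $\mathcal E_{x_0}$ with the line through one size-$2$ orbit; it is precisely this intermediate contraction that lowers the degree from $5$ to $6$ while preserving the conic bundle structure.
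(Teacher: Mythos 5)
Your proof is correct in substance but takes a genuinely different route from the paper's. After the common reduction (Remark~\ref{remark:geography} for $K_X^2\in\{5,6,8\}$, then Remarks~\ref{remark:MFSToPointOrCurve} and~\ref{remark:ConicBundleOverP1} and Lemma~\ref{lemma:ThreeSingularFibers} to realise the surface as a blow-up of $\PPP^2$ at four points with Galois-stable union), the paper invokes Iskovskikh's classification \cite[Theorem 2.6]{Iskovskikh96} a \emph{second} time, to know that $X$ is joined to a standard model (blow-up of a size-$4$ orbit in degree $5$, resp.\ the two-orbits-of-size-$2$ construction in degree $6$) by a fibration-preserving map, and then transports the orbit structure through Proposition~\ref{proposition:SameFourPoints}. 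You instead determine the orbit structure intrinsically: in degree $5$ by the rank count $\rk\Pic_\kk(X)=2=1+\#\{\text{Galois orbits among the four points}\}$, forcing a single orbit of size $4$; in degree $6$ by blowing up a $\kk$-point $x_0$ on a smooth fibre to reduce to a degree-$5$ conic bundle $X_5$, counting $\rk\Pic_\kk(X_5)=3$ to get exactly two orbits, excluding the splitting $(1,3)$ because a $3$-cycle permutes the three singular fibres $E_{12}+E_{34}$, $E_{13}+E_{24}$, $E_{14}+E_{23}$ cyclically while $X_5$ visibly has a singular fibre defined over $\kk$, and finally identifying the contracted curve with the line through one size-$2$ orbit via the uniqueness of the componentwise-split fibre (Remark~\ref{remark:MFSToPointOrCurve} for the two inherited fibres, Observation~\ref{observation:GaloisGroupOfOrbitSize2} on the $\PPP^2$ side). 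All of these steps check out, including the final reordering of the blow-ups and contractions into the form stated in the lemma. What your route buys is independence from the long list beyond Remark~\ref{remark:geography} and from Proposition~\ref{proposition:SameFourPoints}; the paper's route is shorter given that it has both tools in hand anyway (Proposition~\ref{proposition:SameFourPoints} is also needed for Theorem~\ref{theorem:Refinement}, where it moreover gives uniqueness of the four points up to $\PGL_3(\kk)$).

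Two steps need small patches. First, your existence argument for $x_0$ only covers finite fields: over an infinite field a smooth fibre above a $\kk$-point of $\PPP^1$ is a conic that need \emph{not} have a $\kk$-point. But since $X$ is rational over $\kk$, the set $X(\kk)$ is Zariski dense when $\kk$ is infinite, so a $\kk$-point off the two singular fibres exists; together with your Chevalley--Warning argument for finite fields this covers all perfect fields. Second, your rank counts implicitly use $\rk\Pic_\kk(X)=\rk\bigl(\Pic_{\overbar\kk}(X)\bigr)^{\Gal(\overbar\kk/\kk)}$, with Galois fixing the pullback of the hyperplane class (legitimate, as the contraction from Lemma~\ref{lemma:ThreeSingularFibers} is defined over $\kk$) and permuting $E_1,\ldots,E_4$. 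This is Galois descent for the Picard group; it holds here because $\kk$ is perfect and $X$, being rational, has a $\kk$-point (injectivity by Hilbert 90, and the Brauer obstruction to surjectivity vanishes; in fact rank equality alone suffices, the cokernel being torsion in general). With these two remarks inserted, your argument is complete.
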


\begin{proof}
  By Remark~\ref{remark:geography} (where we used Iskovskikh's list \cite[Theorem 2.6]{Iskovskikh96}), if $X$ is not a Hirzebruch surface then $K_X^2\in\{5,6\}$.

  First, assume that $K_X^2=5$.
  Remarks~\ref{remark:MFSToPointOrCurve} and~\ref{remark:ConicBundleOverP1} tell us that $X$ has three singular fibers and that there exists $\sigma\in\Gal(\bar\kk/\kk)$ that exchanges two components of a singular fiber.
  Hence, the assumptions of Lemma~\ref{lemma:ThreeSingularFibers} are satisfied and we find that $X\to\PPP^2$ is the blow-up centered at four points $q_1,\ldots,q_4$ whose union is defined over $\kk$.
  Iskovskikh's list also says that $X$ is obtained by the blow-up $X'\to\PPP^2$ of an orbit $\{p_1,\ldots,p_4\}$ of size $4$ in $\PPP^2$, followed by a sequence $X'\dashto X$ of links of type II between Mori conic bundles.
  So we can apply Proposition~\ref{proposition:SameFourPoints}, which implies in particular that $\{q_1,\ldots,q_4\}$ is again an orbit of size $4$ in $\PPP^2$.

  Now, we assume that $K_X^2=6$.
  As $X$ is rational, Iskovskikh's list gives that $X$ is obtained by the blow-up of an orbit of size $2$ in $\PPP^2$ followed by the contraction of the strict transform of the line between the two points (this is a link $Q\dashto\PPP^2$ of type II), followed by the blow-up $X'\to Q$ of an orbit of size $2$ (link of type III), followed by a sequence $X'\dashto X$ of links of type II between Mori conic bundles.
  Instead of this construction, we can consider $X$ to be obtained by the blow-up $Y'\to\PPP^2$ of two orbits $\{p_1,p_2\}$ and $\{p_3,p_4\}$ of size $2$ in $\PPP^2$, followed by a birational map $\varphi\colon Y'\dashto Y$ preserving the fibration, followed by the contraction $Y\to X$ of a singular fiber defined over $\kk$.
  Note that $Y$ is a conic bundle (but not a Mori conic bundle) with $K_Y^2=5$.
  With Observation~\ref{observation:GaloisGroupOfOrbitSize2} there is $\sigma_1\in\Gal(\bar\kk/\kk)$ that exchanges the irreducible components of one of the three singular fibers of $Y'$ and hence also of $Y$ (since singular fibers are mapped onto singular fibers by Corollary~\ref{corollary:NotAnIsoOfConicBundles}).
  So the assumptions of Lemma~\ref{lemma:ThreeSingularFibers} are satisfied and so $Y$ is the blow-up of four points $q_1,\ldots,q_4\in\PPP^2(\bar\kk)$.
  As the birational map $\varphi\colon Y'\dashto Y$ preserves the fibration, Proposition~\ref{proposition:SameFourPoints} gives an automorphism of $\PPP^2(\kk)$ defined over $\kk$ that maps $\{p_1,\ldots,p_4\}$ onto $\{q_1,\ldots,q_4\}$.
  Hence, also $\{q_1,\ldots,q_4\}$ consists of two orbits of size $2$.
\end{proof}

\begin{corollary}\label{corollary:GeneratingSets}
  Let $\kk$ be any perfect field.
  The group $\Bir_\kk(\PPP^2)$ is generated by the subsets $J_\ast$, $J_\circ$ and $G_{\leq 8}$ of $\Bir_\kk(\PPP^2)$, where \begin{enumerate}
    \item $J_\ast$ are the Jonquieres maps (i.e. the birational maps preserving a pencil of lines),
    \item $J_\circ=\bigcup J_\PPPP$ is the union of groups $J_\PPPP$ of birational maps that fix a pencil of conics through four points $\PPPP$, of which no three are collinear, and such that $\PPPP$ is either an orbit of size $4$ or consists of two orbits of size $2$,
    \item $G_{\leq 8}$ is the set of birational maps with Galois depth $\leq 8$.
  \end{enumerate}
\end{corollary}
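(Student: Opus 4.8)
The plan is to reduce a general $f\in\Bir_\kk(\PPP^2)$ to the three advertised families by running the Sarkisov decomposition and then repackaging it. First I would invoke Theorem~\ref{theorem:Iskovskikh} together with Remark~\ref{remark:DecompositionOfBirMaps} to write
\[
  f=\Psi_{N+1}\circ\Phi_N\circ\cdots\circ\Phi_1\circ\Psi_1,
\]
where every $\Psi_i$ has Galois depth $\leq 8$ (or is an isomorphism of Mori fiber spaces) and every $\Phi_i$ is a composition of links of type II between Mori conic bundles, possibly followed by an isomorphism of Mori fiber spaces; by Remark~\ref{remark:SmallGalois depth} these blocks $\Phi_i$ are the only places where large Galois depth can occur. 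Each $\Phi_i$ preserves its conic bundle fibration, so its source $X_i$ and target $Y_i$ are equivalent \emph{rational} Mori conic bundles.

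The geometric heart is to conjugate each block $\Phi_i$ into $J_\ast$ or $J_\circ$ by a small-depth map to $\PPP^2$. For this I would apply Lemma~\ref{lemma:AllMoriConicBundles}: $X_i$ is a Hirzebruch surface, or has $K_{X_i}^2=5$, or $K_{X_i}^2=6$. In the Hirzebruch case a chain of elementary transformations centered at $\kk$-points followed by the contraction $\FFF_1\to\PPP^2$ gives a birational map $\beta_i\colon X_i\dashto\PPP^2$ of Galois depth $1$ sending the fibration to a pencil of lines; when $K_{X_i}^2=5$ the blow-down of an orbit of size $4$ does the job with depth $4$; when $K_{X_i}^2=6$ one blows up the contracted $\kk$-point and blows down two orbits of size $2$, of depth $2$. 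Choosing an analogous $\beta_i'\colon Y_i\dashto\PPP^2$, the conjugate $h_i:=\beta_i'\Phi_i\beta_i^{-1}$ sends the pencil attached to the four points $\PPPP_i$ onto the one attached to $\PPPP_i'$; Proposition~\ref{proposition:SameFourPoints} (after blowing up the extra $\kk$-point in the $K^2=6$ case) produces $\alpha_i\in\PGL_3(\kk)$ with $\alpha_i(\PPPP_i)=\PPPP_i'$, so $\alpha_i^{-1}h_i$ fixes that pencil and lies in $J_\ast$ or in $J_{\PPPP_i}\subset J_\circ$. As $\PGL_3(\kk)\subset G_{\leq 8}$, this places each $h_i$ in the group generated by $J_\ast$, $J_\circ$ and $G_{\leq 8}$.

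Finally I would substitute $\Phi_i=(\beta_i')^{-1}h_i\beta_i$ and regroup, turning $f$ into an alternating product of the $h_i$ with connecting self-maps of $\PPP^2$ of the shape $\beta_i\Psi_i(\beta_{i-1}')^{-1}$ (taking $\beta_{N+1}$ and $\beta_0'$ to be the identity). It remains to check that each connecting map lies in $G_{\leq 8}$, and here I expect the main obstacle: a priori, composing three maps of Galois depth $\leq 8$ need not stay in $G_{\leq 8}$. The point to nail down is the basic stability property that Galois depth does not increase under composition beyond the maximum of the factors, which holds because $\kk$-birational maps preserve the size of Galois orbits and $\Bas(\psi\circ\psi')\subseteq\Bas(\psi')\cup(\psi')^{-1}(\Bas(\psi))$. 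Granting this, each connecting map has depth $\leq\max\{4,8,4\}=8$ and so lies in $G_{\leq 8}$, exhibiting $f$ as a product of elements of $J_\ast$, $J_\circ$ and $G_{\leq 8}$. The two genuinely delicate points are therefore this depth-stability fact and the verification that the standardizing maps $\beta_i$ can always be chosen of small Galois depth, and that is where I would concentrate the work.
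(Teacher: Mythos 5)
Your proposal is correct and takes essentially the same route as the paper's proof: the decomposition of Remark~\ref{remark:DecompositionOfBirMaps} into small-depth blocks and chains of type II links between Mori conic bundles, the ``shortcuts'' to $\PPP^2$ of Galois depth $\leq 8$ supplied by Lemma~\ref{lemma:AllMoriConicBundles}, Proposition~\ref{proposition:SameFourPoints} to produce the element of $\PGL_3(\kk)$ identifying the two four-point sets, and the same final regrouping into elements of $J_\ast$, $J_\circ$ and $G_{\leq 8}$. The two points you flag are consistent with the paper: the stability of Galois depth under composition is used there tacitly (when asserting that $\pi_{X_i}\circ F_i\circ(\pi_{Y_{i-1}})^{-1}$ has Galois depth $\leq 8$), and your absorption of type IV links into the small-depth blocks (they have depth $0$) simply replaces the paper's explicit factorization of the fibration exchange on $\PPP^1\times\PPP^1$ through $\FFF_1$ and $\PPP^2$.
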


\begin{proof}
  Let $f\in\Bir_\kk(\PPP^2)$.
  By \cite[Theorem 2.5]{Iskovskikh96}, the birational map $f$ can be written as a product of Sarkisov links of type I,II, III and IV.
  By Iskovskikh's classification of Sarkisov links in~\cite[Theorem 2.6]{Iskovskikh96}, the only link of type IV between rational surfaces is exchanging the fibration $\PPP^1\times\PPP^1\simto\PPP^1\times\PPP^1$, $[x_1:x_2;y_1:y_2]$.
  This can be decomposed as \[\PPP^1\times\PPP^1=\FFF_0 \stackrel{\psi}\dashto \FFF_1 \stackrel{\pi}{\to}  \PPP^2 \stackrel{\sigma}\simto \PPP^2 \stackrel{\pi^{-1}}{\dashto} \FFF_1 \stackrel{\psi^{-1}}\dashto \FFF_0 = \PPP^1\times\PPP^1,\]
  where $\psi$ is a link of type II, $\pi$ is the blow-up at one point (that is, of type III), and $\sigma$ is a change of coordinates of $\PPP^2$.
  Therefore, we can write $\varphi$ as a composition of Sarkisov links of type I, II, and III.

  As in Remark~\ref{remark:DecompositionOfBirMaps}, we can write \[f=F_{N+1}\circ\Phi_N\circ F_{N}\circ\cdots\circ F_2\circ\Phi_1\circ F_1,\]
  where $F_i\colon Y_{i-1}\dashrightarrow X_i$ are a composition of links of type I and III, as well as links of type II between Mori fiber spaces with $0$-dimensional base (In particular, the set of base points consists of unions of orbits of size $\leq 8$.), and $\Phi_i\colon X_i\dashrightarrow Y_i$ are a composition of links of type II between Mori conic bundles $X_i\to\PPP^1,Y_i\to\PPP^1$ for $i=1,\ldots,N$, and $Y_0=\PPP^2=X_{N+1}$.

  Lemma~\ref{lemma:AllMoriConicBundles} gives us a ``shortcut'' (that is, a birational map of Galois depth $\leq 8$) $\pi_{Y_i}\colon Y_i\dashto\PPP^2$ (respectively $\pi_{X_i}\colon X_i\dashto \PPP^2$) from any of these $X_i$, $Y_i$ to $\PPP^2$.
  We let $f_i=\pi_{X_i}\circ F_i\circ(\pi_{Y_{i-1}})^{-1}\in \Bir(\PPP^2)$ and note that it has Galois depth $\leq 8$, where the $\pi_{X_i}$ and $\pi_{Y_i}$ are the ``shortcuts'' to $\PPP^2$ from above.
  Hence, $f_i\in G_{\leq 8}$.
  We can thusly write \[f=f_1\circ \pi_{X_1}^{-1}\circ\Phi_1\circ\pi_{Y_1}\circ f_2\circ\pi_{X_2}^{-1}\circ\Phi_2\circ\cdots\circ\Phi_N\circ\pi_{Y_N}\circ f_{N+1}.\]
  Therefore, it is enough to prove that any element of the form \[f=\pi_{X}^{-1}\circ\Phi\circ\pi_Y\] lies in $\langle J_\ast,J_\circ,G_{\leq8} \rangle$ (and then proceed by induction).
  Recall that $\Phi=\varphi_n\circ\cdots\circ\varphi_1$ is a composition of links $\varphi_i\colon Z_{i-1}\dashrightarrow Z_{i}$ of type II between Mori conic bundles $Z_i\to\PPP^1$ with $K_{Z_i}^2\in\{5,6,8\}$ (see Lemma~\ref{lemma:AllMoriConicBundles}), and $Z_0=X$, $Z_n=Y$.
  As each of the $\varphi_i$ preserves the fibration, so does $\Phi$.

  If $X$ and $Y$ are Hirzebruch surfaces, then $X=Y=\FFF_1$ and the $Z_i$ are some Hirzebruch surfaces. As $\pi_X\colon X\to\PPP^2$ is the blow-up of one point $p$ in $\PPP^2$, the fibers of $X$ correspond to the pencil of lines through $p$.
  Similarly, the fibers of $Y$ correspond to the pencil of lines through some point $q$ (and by applying an element of $\Aut(\PPP^2)$ we can assume $q=p$).
  As $\Phi$ preserves the fibration, the pencil of lines through $p$ is mapped under $f$ onto the pencil of lines through $p$, so $f$ is the product of an element of $\Aut(\PPP^2)$ and one of $J_\ast$.

  If $X$ has $K_X^2=5$, we let $\PPPP\subset\PPP^2$ be the set of base points of $\pi_X^{-1}$, which is an orbit of size $4$.
  As in Remark~\ref{remark:BlowUpOf4Pts}, the fibration of $X\to\PPP^1$ corresponds to the pencil of conics through $\PPPP$.
  Analogously, let $\QQQQ\subset\PPP^2$  be the set of base points of $\pi_Y^{-1}$, which is again an orbit of size $4$.
  The fibration $Y\to\PPP^1$ corresponds to the pencil of conics through $\QQQQ$.
  As $\Phi$ preserves the fibration and no base point of $\Phi$ lies on a singular fiber by \cite[Theorem 2.6]{Iskovskikh96}, the singular fibers are preserved.
  By Proposition~\ref{proposition:SameFourPoints}, there exists an automorphism $\alpha\in\Aut_{\kk}(\PPP^2)$ such that $\alpha(\PPPP)=\QQQQ$.
  Therefore, $f$ is the product of an element of $\Aut(\PPP^2)$ and one of $J_\PPPP$.

  If $X$ has $K_X^2=6$, we let $\PPPP\subset\PPP^2$ be the set of base points of $\pi_X^{-1}$, which consists of two orbits of size $2$.
  We can argue as before and find that $f$ is the product of an element of $\Aut(\PPP^2)$ and one in $J_\PPPP$.
\end{proof}

\subsection{Free product}\label{section:freeproduct}

\begin{lemma}\label{lemma:IrreducibleConicRespLine}
  Let $\PPPP=\{p_1,\ldots,p_4\}$ be a set of four points in $\PPP^2(\bar\kk)$, no three collinear.
  \begin{enumerate}
    \item\label{item:IrreducibleConicRespLine--4} If $\PPPP$ is an orbit of size $4$ of the action of $\Gal(\bar\kk/\kk)$, then there exists $A\in\PGL_3(\kk)$ such that $Ap_i=[1:a_i:a_i^2]$ with $a_i\in\bar\kk$ forming an orbit $\{a_1,\ldots,a_4\}\subset\bar\kk$ of size $4$ under the Galois action.
    \item\label{item:IrreducibleConicRespLine--22}
    If $\{p_1,p_2\}$ and $\{p_3,p_4\}$ form two orbits of size $2$, then there exists $A\in\PGL_3(\kk)$ such that $Ap_i=[1:a_i:0]$ for $i=1,2$ and $Ap_i=[1:0:a_i]$ for $i=3,4$ with $a_i\in\bar\kk$ and $\{a_1,a_2\}$ as well as $\{a_3,a_4\}$ form an orbit in $\bar\kk$.
  \end{enumerate}
  In both cases, the field of definition of $\PPPP$ is $\kk(a_1,\ldots,a_4)$.
\end{lemma}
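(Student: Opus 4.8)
The plan is to handle the two cases by exhibiting an explicit coordinate system over $\kk$ adapted to the configuration. Since every coordinate change I use lies in $\PGL_3(\kk)$, it will not alter the field generated by the coordinates of the points; this immediately yields the last assertion that the field of definition of $\PPPP$ equals $\kk(a_1,\dots,a_4)$, so I only need to produce the normal forms.

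For case~\ref{item:IrreducibleConicRespLine--22} I would exploit the lines $L_{12}$ and $L_{34}$. As $\{p_1,p_2\}$ and $\{p_3,p_4\}$ are Galois orbits, both lines are Galois-stable, hence defined over $\kk$, and so is their intersection point $q=L_{12}\cap L_{34}\in\PPP^2(\kk)$; note $L_{12}\neq L_{34}$ and $q\notin\PPPP$ since no three points are collinear. The key observation is that any $\kk$-line $\ell$ not through $q$ automatically misses all four $p_i$: a $\kk$-line through $p_1$ is Galois-stable, hence contains $p_2$, so it equals $L_{12}\ni q$, and similarly for $p_3$. Such an $\ell$ exists over every field (over $\FFF_q$ there are $q^2$ lines avoiding a given point), and then $\ell,L_{12},L_{34}$ form a triangle. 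Choosing $\kk$-coordinates with $L_{12}=\{x_2=0\}$, $L_{34}=\{x_1=0\}$, $\ell=\{x_0=0\}$ forces $q=[1:0:0]$ and gives each $p_i$ a nonzero $x_0$-coordinate, so $p_1,p_2\in\{x_2=0\}$ read as $[1:a_i:0]$ and $p_3,p_4\in\{x_1=0\}$ read as $[1:0:a_i]$; the Galois swaps show $\{a_1,a_2\}$ and $\{a_3,a_4\}$ are orbits of size $2$.

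For case~\ref{item:IrreducibleConicRespLine--4} I would instead find a smooth $\kk$-conic through $\PPPP$ carrying a $\kk$-point. Because $\PPPP$ is a single orbit of size $4$, the Galois image in $\Sym_4$ is transitive, so no pair $\{i,j\}$ is Galois-stable and none of the six lines $L_{ij}$ is defined over $\kk$; over a finite field a non-$\kk$-rational line carries at most one $\kk$-point (a Frobenius-fixed point on $L_{ij}$ also lies on its distinct conjugate), so $\bigcup_{i<j}L_{ij}$ contains at most six $\kk$-points, while $|\PPP^2(\FFF_q)|=q^2+q+1\ge 7$ (over infinite fields the union is a proper closed subset). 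Picking a $\kk$-point $p$ off these six lines, the unique conic $C$ through $p_1,\dots,p_4,p$ is Galois-stable, hence defined over $\kk$; it is smooth because no three of these five points are collinear, and it carries the $\kk$-point $p$. A smooth conic over $\kk$ with a $\kk$-point is $\PGL_3(\kk)$-equivalent to the standard conic $\{x_0x_2=x_1^2\}$, so applying such an $A\in\PGL_3(\kk)$ sends the $p_i$ onto this conic; their images still form an orbit of size $4$, hence none equals the Galois-fixed point $[0:0:1]$, forcing $A(p_i)=[1:a_i:a_i^2]$ with $a_i=x_1/x_0$. The $a_i$ then form a size-$4$ orbit because Galois permutes the $A(p_i)$ transitively and $a_i=x_1/x_0$ is a $\kk$-rational function of the coordinates.

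I expect the main obstacle to be case~\ref{item:IrreducibleConicRespLine--4}: one must guarantee uniformly in $\kk$, including the small finite fields $\FFF_2,\FFF_3$, both the auxiliary $\kk$-point off the six lines and a $\kk$-rational point on the resulting conic. The transitivity argument showing that no $L_{ij}$ is $\kk$-rational is exactly what makes the estimate $6<7\le|\PPP^2(\FFF_q)|$ succeed even over $\FFF_2$, and it is also what forces the conic $C$, cut out using a genuine $\kk$-point $p$, to be $\PGL_3(\kk)$-equivalent to the split conic rather than to an anisotropic one.
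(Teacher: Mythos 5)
Your proposal is correct and takes essentially the same approach as the paper: case~\ref{item:IrreducibleConicRespLine--22} by normalizing the two Galois-stable lines to coordinate lines over $\kk$, and case~\ref{item:IrreducibleConicRespLine--4} by producing a $\kk$-point off the six lines $L_{ij}$ via the count $|\PPP^2(\kk)|\geq 7$ and sending the unique $\kk$-rational conic through the five points to $xz=y^2$. Your reorganized counting (no $L_{ij}$ is $\kk$-rational, hence each carries at most one $\kk$-point) is the same pigeonhole the paper runs as a contradiction, and your extra care with the third coordinate line in case~\ref{item:IrreducibleConicRespLine--22} merely makes explicit a detail the paper leaves implicit (the $p_i$ are not $\kk$-points, so they avoid the coordinate vertices).
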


\begin{proof}
  For \ref{item:IrreducibleConicRespLine--22} it is enough to observe that the line $L$ through $p_1$ and $p_2$ is invariant under the Galois action, hence it is defined over $\kk$. The same holds for the line $L'$ through $p_3$ and $p_4$. Therefore, there exists an automorphism defined over $\kk$ that maps $L$ onto $z=0$ and $L'$ onto $y=0$.

  For \ref{item:IrreducibleConicRespLine--4}, we remark that if there exists a $\kk$-point $p$ that is not collinear with any two of the four points, then the unique conic $C$ through the five points is irreducible.
  Since the set of the five points is invariant under the action of $\Gal(\bar\kk/\kk)$, the conic is defined over $\kk$.
  Moreover, the conic contains a $\kk$-point and so there exists an automorphism of $\PPP^2$ defined over $\kk$ that maps $C$ onto the conic $xz-y^2=0$, giving the desired coordinates of the $p_i$.
  So it remains to prove that there exists a $\kk$-point $p$ that does not lie on any of the six lines through two of the points of $\PPPP$.
  By contradiction, assume that each point in $\PPP^2(\kk)$ is collinear with two of the points in $\PPPP$.
  Hence, all points of $\PPP^2(\kk)$ lie on the union of the six lines $L_{ij}$ through two points $p_i$ and $p_j$.
  Note that for any field $\kk$, $\PPP^2(\kk)$ consists of at least $7$ points.
  Hence, there are two points in $\PPP^2(\kk)$ that lie on the same line $L_{ij}$, say $L_{12}$.
  So $L_{12}$ is invariant under the Galois action, and hence the set $\{p_1,p_2\}$ is defined over $\kk$.
  This is a contradiction to $\PPPP$ being a Galois orbit of size $4$.
\end{proof}

\begin{example}[Construction of a link with Galois depth $2n+1$ for the case $K_X^2=5$]\label{example:ConstructionLink4}
  Let $\kk$ be a perfect field with an orbit $\PPPP=\{p_1,\ldots,p_4\}$ in $\PPP^2$ of size $4$, no three collinear.
  We assume that there is an irreducible polynomial in one variable of odd degree $2n+1$ with roots $r_1,\ldots,r_{2n+1}\in\bar\kk$.
  With Lemma~\ref{lemma:IrreducibleConicRespLine} we can assume that $p_i=[1:t_i:t_i^2]$.
  Consider the $2n+1$ points $q_i=[0:1:r_i]$ (all lying on the line $L$ defined by $x=0$) and the conics $C_i$ given uniquely by the $5$ points $p_1,p_2,p_3,p_4$ and $q_i$.
  We want to show that all the conics $C_i$ are distinct.

  Assume by contradiction that there exist $i\neq j$ with $q_j\in C_i$, so $C_i\cap L=\{q_i,q_j\}$.
  As the Galois group acts transitively on the $q_k$, and $L$ is invariant under the Galois action, for all $k$ we have $C_k\cap L=\{p_k,p_l\}$ for a $l\neq k$.
  Since the orbit $\QQQQ=\{q_1,\ldots,q_{2n+1}\}$ consists of an odd number of points, this is a contradiction.

  Let $X\to\PPP^2$ be the blow-up at $\PPPP$, so $X\to\PPP^1$ is a Mori conic bundle with $K_X^2=5$.
  As the conics $C_i$ are distinct, the orbit $\QQQQ$ of size $2n+1$ from above gives an orbit in $X$ such that each point of the orbit lies on a different fiber.
  Hence, the birational map $X\dashto X'$ that is given by the blow-up of $\QQQQ$ (seen as an orbit in $X$) followed by the contraction of the orbit containing the strict transform of the fibers through the points of $\QQQQ$ is a link of type II between Mori conic bundles, and it has Galois depth $2n+1$.
  \begin{center}
    \begin{tikzpicture}[scale=0.5]
      \draw[name path=ellipse1, rotate=-20] (0,0.5) ellipse (4 and 1);
      \draw[name path=ellipse2, rotate=50] (0,-0.5) ellipse (4 and 1);
      \draw[name path=line] (-3,3) -- (-1,-5);
      \path [name intersections={of=ellipse1 and ellipse2}];
      \node [fill=black,inner sep=1pt,label=-90:$p_1$] at (intersection-1) {};
      \node [fill=black,inner sep=1pt,label=-90:$p_2$] at (intersection-2) {};
      \node [fill=black,inner sep=1pt,label=-90:$p_3$] at (intersection-3) {};
      \node [fill=black,inner sep=1pt,label=-90:$p_4$] at (intersection-4) {};
      \path [name intersections={of=line and ellipse1}];
      \node [fill=black,inner sep=1pt,label=-180:$q_1$] at (intersection-1) {};
      \node [fill=black,inner sep=1pt,label=-180:$q_2$] at (intersection-2) {};
      \path [name intersections={of=line and ellipse2}];
      \node [fill=black,inner sep=1pt,label=-180:$q_{2n-1}$] at (intersection-1) {};
      \node [fill=black,inner sep=1pt,label=-180:$q_{2n}$] at (intersection-2) {};
      \draw [name path=fakeline1, draw=none] (0,0) -- (-5,0);
      \draw [name path=fakeline2, draw=none] (0,0) -- (-5,-2);
      \draw [name path=fakeline3, draw=none] (0,0) -- (-1.2,-5);
      \path [name intersections={of=line and fakeline1}];
      \node [fill=black,inner sep=1pt] at (intersection-1) {};
      \path [name intersections={of=line and fakeline2}];
      \node [fill=black,inner sep=1pt] at (intersection-1) {};
      \path [name intersections={of=line and fakeline3}];
      \node [fill=black,inner sep=1pt,label=-180:$q_{2n+1}$] at (intersection-1) {};
    \end{tikzpicture}
  \end{center}

\end{example}

\begin{example}[Construction of a link with Galois depth $2n+1$ for the case $K_X^2=6$]\label{example:ConstructionLink22}
  Let $\kk$ be a perfect field with two orbits $\{p_1,p_2\}$ and $\{\tilde p_1,\tilde p_2\}$ in $\PPP^2(\bar\kk)$ of size $2$.
  We assume that there is an irreducible polynomial in one variable of odd degree $2n+1$ with roots $r_1,\ldots,r_{2n+1}\in\bar\kk$.
  With Lemma~\ref{lemma:IrreducibleConicRespLine} we can assume that $p_i=[1:a_i:0]$ for an orbit $\{a_1,a_2\}\subset\bar\kk$ and $\tilde p_i=[1:0:b_i]$ for an orbit $\{b_1,b_2\}\subset\bar\kk$.
  So $\PPPP=\{p_1,p_2,\tilde p_1,\tilde p_2\}$ consists of two orbits of size two, and no three of the four points are collinear.
  The points $q_i=[0:1:r_i]$ form an orbit of size $2n+1$.
  As none of the $q_i$ is collinear with two of the four points in $\PPPP$, we consider the irreducible conic $C_i$ through the points $q_i$ and $\PPPP$, which is unique.
  These conics are all distinct by the same argument as in the example above.

  Let $X\dashto\PPP^2$ be the blow-up at $\PPPP$, followed by the contraction of the line through the two points of one of the orbits of size $2$. Then, $X\to\PPP^1$ is a Mori conic bundle with $K_X^2=6$.
  So the birational map $X\dashto X'$ that is given by the blow-up of the orbit $\{q_1,\ldots,q_{2n+1}\}$, followed by the contraction of the strict transform of the conics $C_1,\ldots,C_{2n+1}$ is a link of type II between Mori conic bundles of Galois depth $2n+1$.
\end{example}

\begin{proof}[Proof of Theorem~\textup{\ref{theorem:Refinement}}]
  We consider the group homomorphism from $\Bir(\PPP^2)$ to a free product (indexed by equivalence classes of Mori conic bundles) of direct sums (indexed by equivalence classes of Sarkisov links of type II) of $\ZZZ/2\ZZZ$ constructed in Theorem~\ref{theorem:GroupHomomorphism}.
  In Theorem~\ref{theorem:CremonaNotSimple} we have already seen that the restriction to the equivalence class of Mori conic bundles corresponding to the Hirzebruch surfaces gives an infinite direct sum of $\ZZZ/2\ZZZ$, providing the infinite set $I_0$.
  In the following we construct $\NNNN_4$ distinct equivalence classes of Mori conic bundles $X$ with $K_X^2=5$, and $\NNNN_2$ with $K_X^2=6$, and remark with Examples~\ref{example:ConstructionLink4} and~\ref{example:ConstructionLink22} that there are (at least) as many distinct equivalence classes of links of type II as there are irreducible polynomials of odd degree in $\kk[x]$.
  Projecting onto the corresponding factors gives the desired group homomorphism.
  \begin{itemize}
    \item
    To a Galois orbit $\PPPP$ of size $4$ in $\PPP^2$ we associate the equivalence class of its Mori conic bundle $X\to\PPP^1$ with $K_X^2=5$.
    Note that if there is another orbit $\QQQQ$ of size $4$ in $\PPP^2$ such that no element of $\PGL_3(\kk)$ maps $\PPPP$ onto $\QQQQ$, then the obtained Mori conic bundles are not equivalent by Proposition~\ref{proposition:SameFourPoints}.
    This gives us $\mathcal{N}_4$ distinct equivalence classes of Mori conic bundles, explaining the cardinality of $J_5$.

    For each $n\geq 8$ such that there exists an irreducible polynomial of degree $2n+1$, Example~\ref{example:ConstructionLink4} gives a link $X\dashto X'$ of type II of Galois depth $2n+1\geq 17$ between two Mori conic bundles in the equivalence class.
    \item
    Let $\{p_1,p_2\}\subset\PPP^2$ be an orbit of size $2$.
    As the line through the two points $p_1$ and $p_2$ is defined over $\kk$, we can assume it to be the line $z=0$, and so $p_i=[1:a_i:0]$ for an orbit $\{a_1,a_2\}\subset\bar\kk$.
    Let $\tilde p_i=[1:0:a_i]$ for $i=1,2$.
    So $\PPPP=\{p_1,p_2,\tilde p_1,\tilde p_2\}$ consists of two orbits of size two, and no three of the four points are collinear.
    Consider the Mori conic bundle $X\to\PPP^1$ with $K_X^2=6$ associated to the two orbits of size $2$.
    Note that if we perform the same construction from another orbit $\{q_1,q_2\}$ of size $2$ in $\PPP^2$ such that there is no element in $\PGL_3(\kk)$ that maps $\{p_1,p_2\}$ onto $\{q_1,q_2\}$, then the obtained Mori conic bundles are not equivalent by Proposition~\ref{proposition:SameFourPoints}.
    This gives us $\mathcal{N}_2$ distinct equivalence classes of Mori conic bundles, explaining the cardinality of $J_6$.

    Example~\ref{example:ConstructionLink22} gives a link $X\dashto X'$ of type II of Galois depth $2n+1\geq 17$ between two Mori conic bundles in the equivalence class of $X$.
  \end{itemize}
  With Lemma~\ref{lemma:AllMoriConicBundles} these links extend to a birational map $\PPP^2\dashto\PPP^2$ that factorizes through Sarkisov links with Galois depth $\leq 4$ and one link of Galois depth $2n+1\geq 17$, corresponding to the generator indexed by its equivalence class.
  This implies that $|I|$ is the number of integers $n\geq8$ such that there exists an irreducible polynomial of degree $2n+1$.

  Finally, note that for all finite fields there is a (unique) field extension of degree $2$ and $4$.
  For an irreducible polynomial of degree $4$ with roots $a_1,\ldots,a_4$ we get an orbit $[1:a_i:a_i^2]$ of size $4$ in $\PPP^2$, and for an irreducible polynomial of degree two with roots $a_1,a_2$ we take the orbit $[1:a_i:0]$ of size $2$.
  So we have that $\mathcal{N}_2$ and $\mathcal{N}_4$ are at least $1$.
  (In fact, both are equal to one with~\cite[Lemma 4.1]{ALNZ19}.)
  Over the rational numbers $\QQQ$ there are infinitely many distinct field extensions of degree $2$ and of degree $4$.
  We find that $\NNNN_4$ and $\NNNN_2$ are infinite.
\end{proof}

\newpage
\subsection{Long list of Sarkisov links in a nutshell}\label{section:longlist}

All Sarkisov links between rational surfaces listed in \cite[Theorem 2.6]{Iskovskikh96} are given by links in Figure~\ref{figure:longlist} with notation of Table~\ref{table:longlist}.

\begin{table}[H]
  \caption{Notation to Figure~\ref{figure:longlist}}
  \label{table:longlist}
  \centering
  \begin{tabular}{cl}
    $\mathcal{D}_d$ & Set of del Pezzo surfaces of degree $d$.\\ \addlinespace[0.1cm]
    $\mathcal{C}_d$ & Set of Mori conic bundles $X\to\PPP^1$ with $K_X^2=d$.\\ \addlinespace[0.1cm]
    $X\stackrel{\text{I:a}}{\dashto}Y$& \makecell[l]{Link of type I from $X$ to $Y$ that is given by the blow-up\\ $Y\to X$ of an orbit in $X$ of size $a$.}\\ \addlinespace[0.1cm]
    \makecell{$X\stackrel{\text{II:a:b}}\dashrightarrow Y$} & \makecell[l]{Link of type II from $X$ to $Y$ that is given by the blow-up\\ of an orbit in $X$ of size $a$, followed by the contraction of an\\ orbit of size $b$ to $Y$.\\}
  \end{tabular}
\end{table}

\begin{figure}[h]
  \begin{center}
    \includegraphics[scale=0.75]{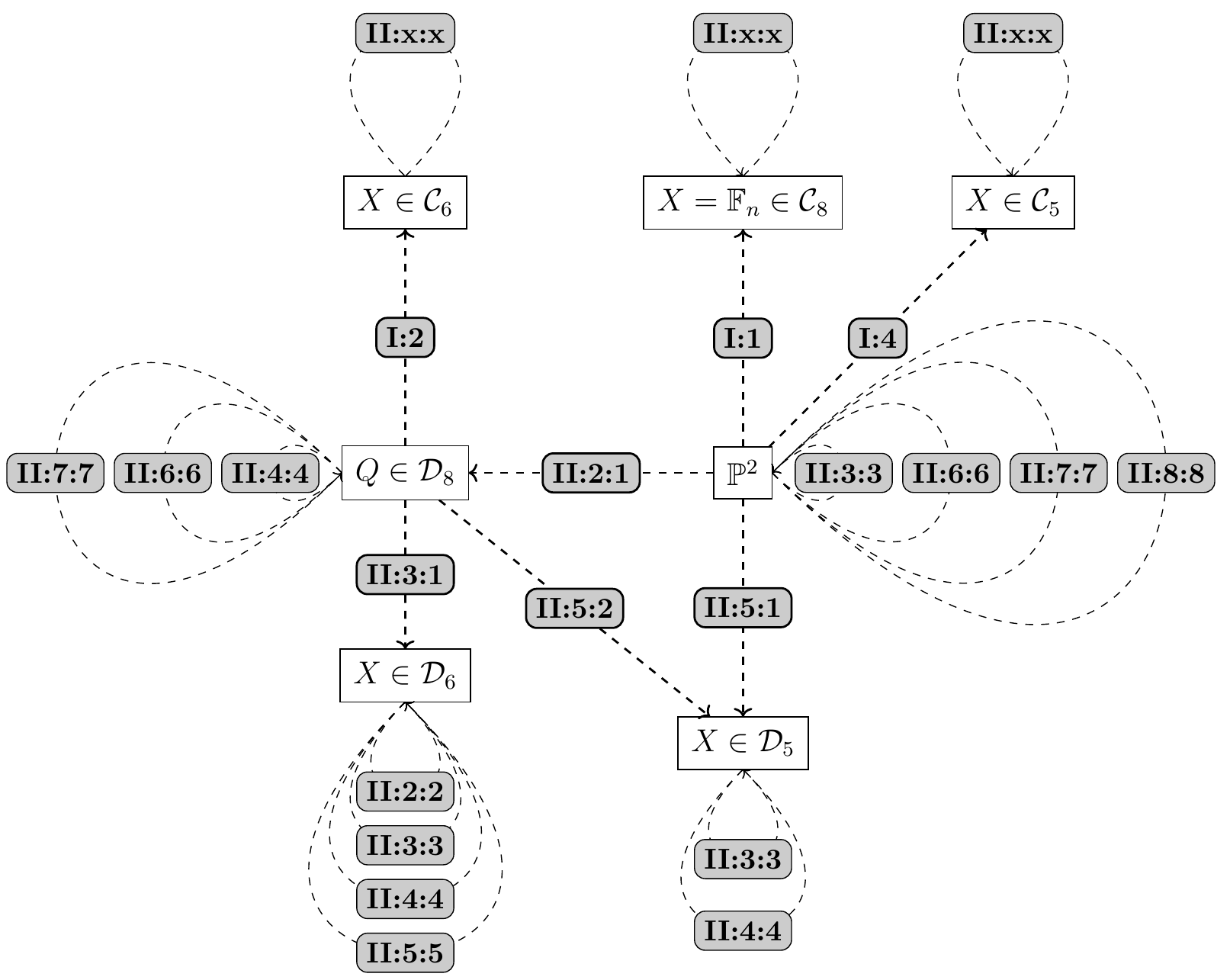}%
  \end{center}
  \caption[]{The long list of Sarkisov links, using the notation of Table~\ref{table:longlist}.
  The links of type II between the conic bundles are special as they allow a link of type II:$x$:$x$ for any $x\geq1$.}

  \label{figure:longlist}
\end{figure}



\end{document}